\newtheorem{thm}{Theorem}[section]
\newaliascnt{lem}{thm}
\newtheorem{lem}[lem]{Lemma}
\newaliascnt{prop}{thm}
\newtheorem{prop}[prop]{Proposition}
\newaliascnt{cor}{thm}
\newtheorem{cor}[cor]{Corollary}
\newaliascnt{conj}{thm}
\newaliascnt{rmk}{thm}
\newtheorem{rmk}[rmk]{Remark}
\newaliascnt{defn}{thm}
\newtheorem{defn}[defn]{Definition}
\newcommand\be{\begin{equation}}
\newcommand\en{\end{equation}}
\newcommand\pr{\begin{proof}}
\newcommand\epr{\end{proof}}
\newcommand{\R}{{\mathbb R}}
\numberwithin{equation}{section}
\begin{document}
\title[A parabolic-hyperbolic system modeling]{A parabolic-hyperbolic system modeling the growth of a tumor}
\begin{abstract}
  In this paper, we consider a model with tumor microenvironment involving nutrient density, extracellular matrix and matrix degrading enzymes, which satisfy a coupled system of PDEs with a free boundary.  For this coupled parabolic-hyperbolic free boundary problem, we prove that there is a unique radially symmetric solution globally in time. The stationary problem involves a ODE system which is transformed into a singular integro-differential equation.  We establish a well-posed theorem for such general types of equations by the shooting method; the theorem is then applied to our problem for the existence of a stationary solution. In addition, for this highly nonlinear problem, we also prove the uniqueness of the stationary solution, which is a nontrivial result. In addition, numerical simulations indicate that the stationary solution is likely  locally asymptotically stable for reasonable range of parameters.
  
\end{abstract}
\author{Rui Li}
\address{School of Mathematics, Renmin University of China, Beijing, 100872, P. R. China}
\email{liruicxis@ruc.edu.cn}
\author{Bei Hu}
\address{Department of Applied Computational Mathematics and Statistics, University of Notre Dame, Notre Dame, IN 46556, USA}
\email{b1hu@nd.edu}
\thanks{Corresponding author: Rui Li}
\thanks{keywords: Free boundary, Shooting method, Singular integro-differential equations, Uniqueness, Simulation}
\thanks{2010 Mathematics Subject Classification: 35R35, 35M10, 35L45, 34B16}
\maketitle


\section{Introduction}
It is estimated that there are 8.2 million cancer-related deaths worldwide every year. Tumor malignancy and metastatic progression are the primary cause, which leads to 90 percent of deaths from cancer.
Many recent cancer-related studies have pointed out that the remodeling of collagen fibers in the extracellular matrix (ECM) of the tumor microenvironment facilitates the migration of cancer cells during metastasis, since such modifications of ECM collagen fibers result in changes of ECM physical and biomachanical properties that affect cancer cell migration through the ECM \cite{MMM}. The ECM is defined as the diverse collection of proteins and sugars that surrounds cells in all solid tissues. This tissue compartment provides structural support by maintaining an insoluble scaffold, and this in turn defines the characteristic shape and dimensions of organs and complex tissues \cite{ECM}. Actually, various types of fibrous proteins are present in the ECM including collagens, elastins and laminis; among these, collagen is the most abound-ant and the main structural protein in the ECM \cite{FC}. In general, the ECM degradation caused by enzyme matrix is a key procedure for the ECM remodeling. In this paper, we try to use the matrix degrading enzymes (MDE) to describe the degrading process.

Over the last few decades, mathematical modeling has played a vital role in testing hypotheses, simulating the dynamics of complex systems and understanding the mechanistic underpinnings of dynamical systems. In particular, an increasing number of mathematical models describing solid tumor growth have been studied and developed; these models are classified into discrete cell-based models and continuum models. At the tissue level, continuum models provide a very good approximation. These models incorporate a system of partial differential equations (PDEs), where cell density, nutrients (i.e., oxygen and glucose), etc., are tracked. Modeling, mathematical analysis and numerical simulations were carried out in numerous papers, see \cite{AGLSC-JMB11,CuiFriedman-03,DMFriedman-NA86,DTGC-JTB04, EA-14,Friedman-04,G-72,PTTLC-BMB17,TLCW-13,Tao-JDE09, WFCMCL-JTB14,WZ-Non12} and the references therein. Lowengrub {\it et} {\it al} \cite{Lowengrub-Non10} provided a systematic review of tumor model studies.
However, in many of these models, the movement of the ECM within the tumor cells is ignored. Therefore, in order to better describe and understand the whole process and related mechanism, we study a mathematical model for the influence of the extracellular matrix (ECM) on tumor's evolution in terms of system of partial differential equation. This model basically consists of a system of parabolic equations and a hyperbolic equation for the density of the nutrient, for the matrix degrading enzymes (MDE) and the ECM concentration. Moreover, our model is more flexible, since it involves nonconstant coefficient $\mu(E)$ and allows the movement of the ECM fibres. All these considerations make our  model into a more reasonable and realistic setting, but lead to a more challenging
problem to analyze.

\section{The model}
In this section, we consider a PDE system to describe the evolution of the tumor. 

\subsection{Nutrients}

\par\vspace*{0.5ex}\par\noindent

Let $\Omega(t)$ denote the tumor domain at time $t$, and nutrient $\sigma$ within the tumor is modeled by a diffusion equation
\begin{equation}\label{LRsigma100}
c\frac{\partial\sigma}{\partial t}=\Delta\sigma + \gamma(\sigma_{B}-\sigma)-\lambda\sigma \mbox{ in }\Omega(t),
\end{equation}
where $c=T_{\text{diffusion}}/T_{\text{growth}}$ is the ratio of the nutrient diffusion time scale to the tumor growth (e.g. tumor doubling) time scale, $\gamma(\sigma_{B}-\sigma)$ denotes the nutrient supplied by the vasculature with $\gamma$ being the transfer rate of nutrient in blood to tissue and $\sigma_{B}$ being the concentration of nutrients in the vasculature, $\lambda\sigma$ describes the rate of consumption by the tumor. By appropriate change of variables, \eqref{LRsigma100} is reduced to (see \cite{FriedmanLam-JDE15})
\begin{equation}\label{LRsigm}
c\frac{\partial\sigma}{\partial t}=\Delta\sigma-\lambda\sigma \mbox{ in }\Omega (t).
\end{equation}

\subsection{Extracellular Matrix}
\par\vspace*{0.5ex}\par\noindent

The concentration of the ECM in the system is governed by contributions from three factors: haptotaxis, degrading, 
production. Here, there is a basic assumption that an equilibrium amount of nutrient $\bar{\sigma}$ is needed for tumor to sustain itself; beyond this $\bar{\sigma}$ the tumor grows, and below $\bar{\sigma}$ the tumor shrinks. Therefore a linear approximation for the proliferation $S$ is given by
\begin{equation}\label{LRsi}
S=\mu(\sigma-\bar{\sigma})\hspace{3em}(\bar{\sigma}>0),
\end{equation}
where $\mu\sigma$ represents the growth rate and $\mu\bar{\sigma}$ represents the death rate from apoptosis.
We shall employ Darcy's law (see \cite{CuiFriedman-03,DMFriedman-NA86,Friedman-04}):
\begin{equation}
\overrightarrow{V}=-\tilde{\mu}\nabla{p},
\end{equation}
where $\overrightarrow{V}$ represents the velocity of proliferating cells and $p$ the pressure within the tumor resulting from this proliferation. It is well known that Darcy's law describes the velocity of fluid in a porous medium, with the coefficient $\tilde{\mu}$ depending on the density of the porous medium, representing a mobility that reflects the combined effects of cell-cell and cell-matrix adhesion. In employing Darcy's law, we have assumed ECM to be the porous medium; $\tilde{\mu}=\tilde{\mu}(E)$ depends on the amount of ECM present in the tumor. By conservation of mass
\begin{equation}\label{LRVb}
\mbox{div}\overrightarrow{V}=S.
\end{equation}
Substituting \eqref{LRsi} into \eqref{LRVb}, we obtain
\begin{equation}\label{LRVc}
\mbox{div}\overrightarrow{V}=\mu(\sigma-\bar{\sigma}).
\end{equation}

In papers (\cite{CuiFriedman-03,DMFriedman-NA86,Friedman-04,FriedmanLam-JDE15}) both $\mu$ and $\tilde{\mu}$ are assumed to be constants; these are good approximations when ECM dose not vary much. Here, we shall incorporate a more reasonable assumption that both $\mu$ and $\tilde{\mu}$ also depend on ECM density $E$. It is clear that $\mu(E)$ and $\tilde{\mu}(E)$ are both monotone decreasing functions bounded from above and below by positive constants. In order to do so, we also need to incorporate the equation for $E$ (see \cite{DTGC-JTB04,EACR-MBE06,GC-JTB08}):

\begin{equation}\label{LRE}
\frac{\partial E}{\partial t}+\mbox{ div}(E\cdotp\overrightarrow{V}) = -\gamma m E+\phi(E),
\end{equation}
where the term $\mbox{div}(E\cdotp\overrightarrow{V})$ represents the movement of ECM owing to the cell proliferation $\overrightarrow{V}$; the term $-\gamma m E$ represents the degrading of ECM by MDE, here $m$ represents the concentration of MDE (see \cite{TLCW-13}); finally the term $\phi(E)$ is a positive term representing reorganization of ECM. Since the growth rate of ECM is smaller when the ECM is denser, $\phi(E)$ is a positive monotone decreasing function of $E$.

\subsection{Matrix degrading enzymes}
\par\vspace*{0.5ex}\par\noindent

MDE is produced by the tumor to degrade ECM so that the cells can escape. The equation for MDE is given by (see \cite{ACNST-JTM00,DTGC-JTB04})
\begin{equation}\label{LRm}
\frac{\partial m}{\partial t}=D_{m}\Delta m+\alpha-\beta m \mbox{ in } \Omega(t),
\end{equation}
where $\Delta m$ represents diffusion, $D_{m}$ is the constant diffusion coefficient and $-\beta m$ represents natural decay. Here we assume $\alpha$ to be a constant production rate by the tumor.

To summarise, the model studied in this paper is as follows:
\begin{subequations}\label{lirui01}\begin{align}
	\label{lirui01Sa} &c\frac{\partial\sigma}{\partial t}=\Delta\sigma-\lambda\sigma \hspace{-8em}&\mbox{ in }\Omega(t),\\
	\label{lirui01Ea} &\frac{\partial E}{\partial t}+\mathrm{div}(E\cdot\overrightarrow{V}) = -\gamma m E+\phi(E) \hspace{-8em}&\mbox{ in }\Omega(t),\\
	\label{lirui01ma} &\frac{\partial m}{\partial t} =D_{m}\triangle m+\alpha-\beta m \hspace{-8em}&\mbox{ in }\Omega(t),\\
	\label{lirui01pa} &\overrightarrow{V}=-\tilde{\mu}(E)\nabla p \hspace{-8em}&\mbox{ in }\Omega(t),\\
	\label{lirui01VA} &\mathrm{div}\overrightarrow{V} = \mu(E)(\sigma-\bar{\sigma}) \hspace{-8em}&\mbox{ in }\Omega(t).
	\end{align}\end{subequations}

\subsection{Boundary and initial conditions}
\par\vspace*{0.5ex}\par\noindent

We impose boundary conditions
	\begin{align}
	\label{lirui02S} \sigma =1 &\mbox{ on } \partial\Omega(t),\\
	\label{lirui02m} \frac{\partial m}{\partial n} =0 &\mbox{ on } \partial\Omega(t),\\
	\label{lirui02p} p =\kappa  &\mbox{ on } \partial\Omega(t).
	\end{align}
	Equation \eqref{lirui02S} represents a condition that the tumor is immersed in an environment of constant nutrients; equation \eqref{lirui02m} represents no exchange of MDE on the tumor boundary; and equation \eqref{lirui02p} represents the cell-to-cell adhesiveness, where $\kappa$ is the mean curvature. Finally, assuming the velocity is continuous up to the boundary, then
	\begin{equation}\label{lirui03}
	V_n=\overrightarrow{V}\cdotp\overrightarrow{n}=-\tilde{\mu}\nabla p\cdot\overrightarrow{n} \mbox{ on } \partial\Omega(t),
	\end{equation}
	where ${V_n}$ represents the velocity of the boundary $\partial\Omega(t)$ in the normal direction.
	
	Initial conditions:
	\begin{equation}\label{LRLR23c}
	\Omega(0)=\Omega_{0},\hspace{0.5em} \sigma|_{t=0}=\sigma_{0}(x),\hspace{0.5em}
	m|_{t=0}=m_{0}(x),\hspace{0.5em}
	E|_{t=0}=E_{0}(x).\hspace{0.5em}
\end{equation}

In comparison with a system assuming ECM to be constants, our system  is more reasonable and complex because we assume that ECM satisfies a hyperbolic equation coupled with nutrient $\sigma$ and pressure $p$ deriving from  cells' proliferation. In this paper we shall study the radially symmetric case. While tumors in vivo are not spherical, tumors in vitro are  typically of spherical shape \cite{WBL}. The structure of this paper is as follows. In section 3, we proceed to derive estimates to establish global existence and uniqueness and gave the lower bounds estimate of tumor radius $R(t)$. In section 4, we prove that there exists a unique stationary solution by the shooting method. In section 5, the corresponding numerical simulation confirms the expected asymptotic stability in certain parameter range. In appendix,  we proved that the well-posed theorem for the general singular integro-differential equation, which is a preliminary work for section 4.
		
\section{ Time dependent solution}

In this section we are concerned with the existence of radially symmetric solution. 

\subsection{Reformulation of the radially symmetric problem}
In order to prove the existence of the solution, for convenience, we do a reformulation for the radially symmetric problem.

  In radially symmetric case $\overrightarrow{V}=\frac{x}{|x|}u$, we have by \eqref{LRVc}
$$\begin{aligned}\mbox{div}
\overrightarrow{V}&=u_{r}+\frac{2}{r}u=\mu(E)(\sigma-\bar{\sigma}),\\
\mbox{div}(E\overrightarrow{V})&= E\cdot\mbox{div}\overrightarrow{V} + (\nabla E)\cdotp\overrightarrow{V}=E\mu(E)(\sigma-\bar{\sigma})+ u E_{r}.
\end{aligned}$$
Substituting this into \eqref{LRE}, we obtain
\begin{equation}\label{LRLR21}\begin{aligned}
\frac{\partial E}{\partial t}+u\frac{\partial E}{\partial r} = Q(\sigma, m, E),
\end{aligned}\end{equation}
where
\begin{equation}
Q(\sigma, m, E) = -\gamma m E+\phi(E)-E\mu(E)(\sigma-\bar{\sigma}).
\end{equation}
Furthermore, in the radially symmetric case, the mean curvature $\kappa$ is a constant, and once $\sigma$ and $E$ are determined, one can uniquely solve $p$ from the following linear elliptic equation:
$$\begin{cases}
-\mbox{div} (\mu_{2}(E))\nabla p)=\mu_{1}(E)(\sigma-\bar{\sigma})&\mbox{ for }|x|<R(t),\\
p=\kappa=\frac{1}{R(t)}&\mbox{ for }|x|=R(t).
\end{cases}$$
Therefore, we can drop the equation for $p$. In summary, the equations in the radially symmetric case are:
\begin{align}\label{meq}
c\frac{\partial\sigma}{\partial t}=\frac{1}{r^{2}}\frac{\partial}{\partial r} \left(r^{2}\frac{\partial\sigma}{\partial r}\right)-\lambda\sigma, \hspace{1em} 0\leq r< R(t), t>0,\\
\frac{\partial E}{\partial t}+u\frac{\partial E}{\partial r}=Q(\sigma, m, E),\hspace{1em}0\leq r< R(t), t>0,\\
\frac{\partial m}{\partial t} =D_m\frac{1}{r^{2}}\frac{\partial}{\partial r}\left(r^{2}\frac{\partial m}{\partial r}\right)+\alpha-\beta m,\hspace{1em} 0\leq r< R(t), t>0,\\
u_{r}+\frac{2}{r} u=\mu(E)(\sigma-\bar{\sigma}),\hspace{1em}0\leq r< R(t), t>0.
\end{align}
The system is supplemented with boundary conditions
\begin{align}
\sigma(R(t),t)=1,\hspace{1em} t>0,\\
\frac{\partial m}{\partial r}(R(t), t)=0,\hspace{1em} t>0,
\end{align}
and free boundary condition (assuming continuity of velocity up to the boundary)
\begin{equation}\label{diR}
\frac{dR(t)}{dt}=u(R(t),t).
\end{equation}
The system is also supplemented with initial conditions
\begin{align}
\sigma(r,0)=\sigma_{0}(r),\hspace{1em} 0\leq r\leq R(0),\\
m(r,0)=m_{0}(r),\hspace{1em} 0\leq r\leq R(0),\\
E(r,0)=E_{0}(r),\hspace{1em} 0\leq r\leq R(0).
\end{align}

By symmetry,
\begin{align}
\frac{\partial\sigma}{\partial r}(0,t)=0,\hspace{2ex} \frac{\partial E}{\partial r}(0,t)=0,\hspace{2ex}
\frac{\partial m}{\partial r}(0,t)=0,\hspace{2ex} u(0,t)=0.
\end{align}

We now use a change of variables that transform the free boundary into a fixed boundary:
$$\frac{r}{R(t)}\rightarrow\tilde{r}, t\rightarrow\tilde{t},\sigma\rightarrow\tilde{\sigma}, m\rightarrow\tilde{m}, E\rightarrow\tilde{E},\frac{u}{R}\rightarrow\tilde{u}.$$

For simplicity, we drop "$\sim$" in our notation, and then $\sigma(r,t),$ $m(r,t),$ $E(r,t),$ $u(r,t)$ satisfy
\begin{subequations}\label{LRLR23}\begin{align}
	\label{LRLR23Sa} &c\sigma_{t}=\frac{1}{[R(t)]^{2}}(\sigma_{rr}+\frac{2}{r}\sigma_{r}) + \frac{R'(t) }{R(t)}r\sigma_{r}-\lambda\sigma,\hspace{1em}0<r<1, t>0,\\
	\label{LRLR23ma} &m_{t}=D_m\frac{1}{[R(t)]^{2}}(m_{rr}+\frac{2}{r}m_{r}) + \frac{R'(t)}{R(t)}rm_{r} +\alpha-\beta m,\hspace{1em}0<r<1, t>0,\\
	\label{LRLR23Ea} &E_{t} + vE_{r}=Q(\sigma,m,E),\hspace{1em}0<r<1, t>0,\\
	\label{LRLR23v}  &v(r,t)=u(r,t)-ru(1,t),\hspace{1em}0<r<1, t>0,\\
	\label{LRLR23ua} &u_{r}+\frac{2}{r}u=\mu(E)(\sigma-\bar{\sigma}),\hspace{1em}0<r<1, t>0,\\
	\label{LRLR23R}  &R_{t}(t)=R(t){u}(1,t),\hspace{2ex}t>0,\\
	\label{LRLR23Sb} &\sigma_{r}(0,t)=0,\hspace{2ex}\sigma(1,t)=1,\hspace{2ex}t>0,\\
	\label{LRLR23mb} &m_{r}(0,t)=0,\hspace{2ex}m_{r}(1,t)=0,\hspace{2ex}t>0,\\
	\label{LRLR23ub} &u(0,t)=0,\hspace{2ex} t>0,
	\end{align}\end{subequations}
with initial conditions
\begin{subequations}\label{LRLR23c}\begin{align}
	\label{LRLR23Rc} &R(0)=R_{0},\\
	\label{LRLR23Sc} &\sigma(r,0)=\sigma_{0}(r),\hspace{1em}0<r<1,\\
	\label{LRLR23mc} &m(r,0)=m_{0}(r),\hspace{1em}0<r<1,\\
	\label{LRLR23Ec} &E(r,0)=E_{0}(r),\hspace{1em}0<r<1,
	\end{align}\end{subequations}
where $\sigma_{0}, m_{0}\in C^{2}[0,1]$ and $E_{0}\in C^{1}[0,1]$ satisfy
\begin{equation}\label{LRLR23ini} R_{0}>0,\sigma_{0r}(0)=0,\sigma_{0}(1)=1, m_{0r}(0)=0, m_{0r}(1)=0, E_{0r}(0)=0.\end{equation}

In particular, from \eqref{LRLR23ub} and \eqref{LRLR23v}, we see that
$$v(0,t)=0,\hspace{1em}v(1,t)=0.$$
This implies that no boundary conditions are needed for $E$ at $r=0,1$.

Throughout this section we assume that the initial data of \eqref{LRLR23c} satisfy the following assumption: $\sigma_{0}, m_{0}$ are radially symmetric functions and belong to $W^{2,p}(B_{1})$ (for some fixed $p>5$), $E_{0}\in C^{1}[0,1]$, $R_{0}>0$. By biological consideration, these initial functions are nonnegative and do not vanish completely.

\subsection{Local existence and uniqueness}
 We start with local existence and uniqueness by applying the contraction mapping principle.

For $T>0$, we set
$$Q_{T}=(0,1)\times(0,T),\hspace{2ex}\bar{Q}_{T}=[0,1]\times[0,T].$$

\begin{defn}\label{defn:XT}
	For any given $T>0$, we define a complete metric space $(X_{T},d)$ as follows:
	$X_{T}$ is the subset of $W^{1,p}(0,T)\times C^{0,0}(\bar{Q}_{T})$ consisting of a collection of pairs of functions $R=R(t)$, $E=E(r,t)$ satisfying
	
	(i) $R\in W^{1,p}(0,T)$, $R(0)=R_{0}$ and
	\begin{equation}\label{LRLR21R}
	\|R'\|_{L^{p}(0,T)}\leq 1,\hspace{2ex}
	\frac{R_{0}}{2}\leq R(t)\leq 2R_{0},\hspace{2ex}t\in[0,T].
	\end{equation}
	
	(ii) $E\in C^{0,0}(\bar{Q}_{T})$, $E(r,0)=E_{0}(r)$ and
	\begin{equation}\label{LRLR21E}\|E\|_{C^{0,0}(\bar{Q}_{T})}\leq M_{1},\end{equation}
	where $M_{1}=\|E_{0}\|_{L^{\infty}}+1$.
	
	The metric $d$ in $X_{T}$ is as follows
	$$\begin{aligned}
	& d((R_{1},E_{1}), (R_{2},E_{2}))\\
	=& \|R_{1}-R_{2}\|_{C[0,T]} + \|R'_{1}-R'_{2}\|_{L^{p}(0,T)} + \|E_{1}-E_{2}\|_{C^{0,0}(\bar{Q}_{T})}.
	\end{aligned}$$
\end{defn}

Given a pair $(R, E)\in X_{T}$, we define $\sigma=\sigma(r,t)$ as the solution of the initial boundary value problem \eqref{LRLR23Sa}, \eqref{LRLR23Sb}, \eqref{LRLR23Sc}, and $m=m(r,t)$ as the solution of the initial boundary value problem \eqref{LRLR23ma}, \eqref{LRLR23mb}, \eqref{LRLR23mc}.
Let us define $u=u(r,t)$ as the solution of \eqref{LRLR23ua}, \eqref{LRLR23ub} and $v=v(r,t)$ by \eqref{LRLR23v}, that is
\begin{equation}\label{LRLR24u}
u(r,t)=\frac{1}{r^{2}}\int_{0}^{r}g(\sigma(\rho,t), E(\rho,t))\rho^{2}d\rho,
\end{equation}
where $g(\sigma, E)=\mu(E)({\sigma}-\bar{\sigma})$.
We next define $(\hat{R},\hat{E})=\mathcal{F}(R, E)$ such that
\begin{align}
\label{LRLR24Ea}\hat{E}_{t}(r,t) + v(r,t)\hat{E}_{r}(r,t)&=Q({\sigma}, m, \hat{E}), \hspace{1em}0<r<1, t>0,\\
\label{LRLR24Eb}\hat{E}(r,0)&=E_{0}(r),\hspace{1em}0<r<1,\\
\label{LRLR24Ra}\hat{R}'&=u(1,t)\hat{R}(t),\hspace{1em} t>0,\\
\label{LRLR24Rb}\hat{R}(0)&=R_{0}.
\end{align}

We shall prove the existence of a local solution of \eqref{LRLR23}-\eqref{LRLR23ini} by using the contraction mapping theorem for a map $\mathcal{F}:X_{T}\rightarrow X_{T}$.

Clearly, the problem \eqref{LRLR24Ra}-\eqref{LRLR24Rb} can be solved explicitly
\begin{equation}\label{LRLR24R}
\hat{R}(t)=R_{0}\exp\left(\int_{0}^{t}u(1,\tau)d\tau\right).
\end{equation}

To uniquely solve \eqref{LRLR24Ea}-\eqref{LRLR24Eb},
we introduce the characteristic curves ending at $(r,t)$
\begin{equation}\label{LRLR25}\begin{cases}
\displaystyle\frac{d\xi(r,t;s)}{ds}=v(\xi(r,t;s),s),\\
\xi(r,t;t)=r.
\end{cases}\end{equation}
Since $v(r,t)$ is continuous in $(r,t)$ and Lipschitz in $r$, the characteristic curves $\xi$ is well defined for $0\leq s\leq t.$
We then rewrite \eqref{LRLR24Ea} in the form
$$\frac{d}{ds}\hat{E}(\xi(r,t;s),s)=Q({\sigma}(\xi(r,t;s),s),m(\xi(r,t;s),s),\hat{E}(\xi(r,t; s),s)).$$
Note that $v$ satisfies \eqref{LRLR23v}, we see that the characteristic curves do not leave and enter the space interval $(0,1)$.
For simplification of notation, we denote $\mathcal{Q}(r,t,E)=Q(\sigma(r,t), m(r,t), E)$ and consider
\begin{equation}\label{LRLR26}\begin{cases}
\displaystyle\frac{d\tilde{E}(r,t;s)}{ds} = \mathcal{Q}(\xi(r,t;s),s, \tilde{E}(r,t;s)), \hspace{1em}0<s<t,\\
\tilde{E}(r,t;0)=E_{0}(\xi(r,t;0)).
\end{cases}\end{equation}
Clearly, \eqref{LRLR26} admits a unique (local) solution $\tilde{E}$.
Thus, $\hat{E}(r,t)=\tilde {E}(r,t;t)$ is the solution of \eqref{LRLR24Ea}-\eqref{LRLR24Eb}.

If we regard equation \eqref{LRLR23Sa} as a 1-dimensional parabolic equation
with the spatial variable $r$, then the coefficient of $\partial\sigma/\partial r$
has singularity at tumor center $r=0$ due to
$$\Delta\sigma=\frac{\partial^{2}\sigma}{\partial r^{2}} + \frac{2}{r}\frac{\partial\sigma}{\partial r}.$$
However, this singularity can be eliminated by employing the three-dimensional Cartesian coordinate.

Due to the assumption imposed on $R(t)$ in \eqref{LRLR21R}, we see that the coefficient $R'/R$ in equation \eqref {LRLR23Sa} and equation \eqref{LRLR23ma} only belongs to $L^{p}$. One can apply the classical parabolic theory to obtain the strong solution $\sigma$ and $m$ exist and belong to $W^{2,1,p}(Q_{T})$, see Theorem 9.1 and its corollary of chapter IV in \cite{Lady-68}.

In order to prove $\mathcal{F}$ maps $X_{T}$ into itself for some small $T$, it suffices to estimate the norms of $(\hat{R},\hat{E})$ as well as $\sigma,m,u,v$.

For notational convenience, in the sequel we shall denote by $C$ any one of several constants which depend on $R_{0}$, $M_1$, but does not depend on $T\in(0,1)$; we shall not keep track of their special forms since this will have no bearing on future considerations.

\begin{lem}\label{lem:102sigma}
	If $R(t)$ satisfies \eqref{LRLR21R}, then the strong solutions $\sigma$ and $m$ admit the following uniform bounds
	\begin{equation}\label{LRLR29sigma}\begin{aligned}
	\|\sigma\|_{W^{2,1,p}(B_{1}\times(0,T))}&\leq C,
	\|\sigma\|_{C^{1+\alpha,(1+\alpha)/2}(\bar{B}_{1}\times[0,T])}&\leq C,
	\end{aligned}\end{equation}
	\begin{equation}\label{LRLR29m}\begin{aligned}
	\|m\|_{W^{2,1,p}(B_{1}\times(0,T))}&\leq C,
	\|m\|_{C^{1+\alpha,(1+\alpha)/2}(\bar{B}_{1}\times[0,T])}&\leq C,
	\end{aligned}\end{equation}
	where $\alpha=1-5/p$ and $B_{1}$ is the unit ball in $\R^{3}$.
\end{lem}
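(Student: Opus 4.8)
The plan is to treat equations \eqref{LRLR23Sa} and \eqref{LRLR23ma} as linear uniformly parabolic equations in the fixed cylinder $Q_T$ and apply the classical $L^p$ theory for strong solutions. First I would rewrite the singular radial operator $\frac{1}{r^2}(\sigma_{rr}+\frac{2}{r}\sigma_r)$ as the genuine Laplacian $\Delta\sigma$ acting on a radially symmetric function on the three-dimensional ball $B_1$, so that the coefficient singularity at $r=0$ disappears; this is exactly the remark made just before the lemma. In these Cartesian coordinates the equation for $\sigma$ becomes $c\sigma_t = \frac{1}{R(t)^2}\Delta\sigma + \frac{R'(t)}{R(t)}\,x\cdot\nabla\sigma - \lambda\sigma$ on $B_1\times(0,T)$, with the Dirichlet data $\sigma=1$ on $\partial B_1$ and initial data $\sigma_0\in W^{2,p}(B_1)$; similarly for $m$ with Neumann data. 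The constraint \eqref{LRLR21R} gives $R_0/2\le R(t)\le 2R_0$, so $1/R(t)^2$ is bounded above and below by positive constants depending only on $R_0$, i.e. uniform ellipticity holds independently of $T$.

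The one delicate point, and the step I expect to be the main obstacle, is that the first-order coefficient $R'(t)/R(t)$ lies only in $L^p(0,T)$ rather than in $L^\infty$. Here I would invoke Theorem 9.1 of Chapter IV in \cite{Lady-68} (and its corollary), which gives existence and uniqueness of a strong solution in $W^{2,1,p}(B_1\times(0,T))$ for parabolic equations whose lower-order coefficients are merely in $L^p$ with $p$ large enough ($p>5$ here, which is $p>n+2$ with $n=3$, the threshold ensuring the Sobolev embedding $W^{2,1,p}\hookrightarrow C^{1+\alpha,(1+\alpha)/2}$ with $\alpha=1-5/p$). To get the \emph{uniform} bound $C$ independent of $T\in(0,1)$, I would first homogenize the boundary condition (subtract the constant $1$ for $\sigma$; $m$ already has homogeneous Neumann data) and then apply the a priori $L^p$ estimate
\begin{equation*}
\|\sigma\|_{W^{2,1,p}(B_1\times(0,T))}\le C\Big(\|\sigma_0\|_{W^{2,p}(B_1)} + \|\lambda\sigma + \tfrac{R'}{R}x\cdot\nabla\sigma\|_{L^p(B_1\times(0,T))} + \text{bdry terms}\Big).
\end{equation*}
The term $\frac{R'}{R}x\cdot\nabla\sigma$ must be absorbed: since $|x|\le 1$ and $1/R\le 2/R_0$, we have $\|\frac{R'}{R}x\cdot\nabla\sigma\|_{L^p(Q_T)}\le C\|R'\|_{L^p(0,T)}\,\|\nabla\sigma\|_{L^\infty(Q_T)}$, and $\|R'\|_{L^p(0,T)}\le 1$ by \eqref{LRLR21R}, while $\|\nabla\sigma\|_{L^\infty}$ is controlled by $\|\sigma\|_{W^{2,1,p}}$ times a constant that is $o(1)$ as $T\to0$ (or one uses interpolation $\|\nabla\sigma\|_{L^\infty(Q_T)}\le \varepsilon\|\sigma\|_{W^{2,1,p}(Q_T)} + C_\varepsilon\|\sigma\|_{L^p(Q_T)}$ with $C_\varepsilon$ possibly $T$-dependent but harmless after shrinking $T$). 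Choosing $T$ small — consistent with the eventual fixed-point argument, which already works on a small interval — lets this term be absorbed into the left-hand side, yielding the uniform $W^{2,1,p}$ bound.

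With the $W^{2,1,p}(B_1\times(0,T))$ estimate in hand, the Hölder bound $\|\sigma\|_{C^{1+\alpha,(1+\alpha)/2}(\bar B_1\times[0,T])}\le C$ with $\alpha=1-5/p$ follows immediately from the parabolic Sobolev embedding theorem (again Chapter II of \cite{Lady-68}), since $p>5=n+2$. The argument for $m$ is identical, with the Neumann boundary condition $\partial m/\partial n=0$ in place of the Dirichlet condition and the bounded zeroth/first-order terms $\alpha-\beta m+\frac{R'}{R}x\cdot\nabla m$ handled the same way; note $m$ needs no preliminary subtraction since its boundary data already vanish. I would remark that all constants $C$ depend only on $R_0$, $\|\sigma_0\|_{W^{2,p}}$, $\|m_0\|_{W^{2,p}}$, $\lambda,\alpha,\beta$, and not on $T\in(0,1)$ nor on the particular pair $(R,E)\in X_T$, which is precisely what is needed to later verify that $\mathcal F$ maps $X_T$ into itself.
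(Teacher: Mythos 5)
Your overall framework matches the paper's: pass to Cartesian coordinates on $B_{1}$ to remove the singularity at $r=0$, invoke Theorem 9.1 of Chapter IV of \cite{Lady-68} for an equation whose first-order coefficient is only $L^{p}$ in time, and conclude with the embedding $W^{2,1,p}\hookrightarrow C^{1+\alpha,(1+\alpha)/2}$, $\alpha=1-5/p$, $p>5=n+2$. The difference --- and the genuine gap --- is in how you obtain a constant independent of $T$. You treat $\frac{R'}{R}\,x\cdot\nabla\sigma$ as a right-hand side and absorb it via $\|\nabla\sigma\|_{L^{\infty}(Q_{T})}\le \varepsilon\|\sigma\|_{W^{2,1,p}(Q_{T})}+C_{\varepsilon}\|\sigma\|_{L^{p}(Q_{T})}$, asserting that the relevant constants are $o(1)$ or ``harmless'' as $T\to0$. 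This is backwards: on the thin cylinder $B_{1}\times(0,T)$ these interpolation and embedding constants \emph{blow up} as $T\to0$. Concretely, for a time-independent $\sigma$ one has $\|\sigma\|_{W^{2,1,p}(B_{1}\times(0,T))}=T^{1/p}\|\sigma\|_{W^{2,p}(B_{1})}\to0$ and $\|\sigma\|_{L^{p}(Q_{T})}=T^{1/p}\|\sigma\|_{L^{p}(B_{1})}\to0$ while $\|\nabla\sigma\|_{L^{\infty}}$ stays fixed, so no such inequality can hold with constants uniform in small $T$ unless the initial datum is included on the right; and once $\sigma_{0}$ enters, the smallness needed to absorb is not available from $\|R'\|_{L^{p}(0,T)}\le1$ alone. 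Shrinking $T$ therefore does not rescue the absorption; it worsens the inequality you rely on.

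The paper sidesteps this with a different device, which you should adopt: extend $R$ to the \emph{fixed} interval $[0,1]$ by setting $\tilde R(t)=R(T)$ for $t\in(T,1]$, so that $\|\tilde R\|_{W^{1,p}(0,1)}\le 2R_{0}+1$ and hence $1/\tilde R^{2}$ and $\tilde R'/\tilde R$ have norms controlled by $R_{0}$ alone; solve the problem on the fixed cylinder $B_{1}\times(0,1)$, where the cited $L^{p}$ theorem itself handles the $L^{p}$ lower-order coefficient and yields a constant depending only on these coefficient norms (no absorption needed); then use uniqueness to identify this solution with $\sigma$ on $[0,T]$ and restrict. Everything else in your write-up (the Cartesian reformulation, the homogenization of the Dirichlet data, the identical treatment of $m$ with Neumann data, and the bookkeeping of what $C$ depends on) is consistent with the paper.
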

\begin{proof}
	The bounds for $\sigma$ and $m$ are similar, we focus the uniform estimates for $\sigma$ only.
	Note that all functions are defined in the time interval $[0,T]$, we can extend $R(t)$, so that it is defined in a fixed interval [0,1]. More precisely,
	$\tilde{R}\in W^{1,p}(0,1)$ is defined as follows
	$$\tilde{R}(t)=\begin{cases}
	R(t),\hspace{0.5ex}\mbox{for}\hspace{0.5ex} t\in[0,T],\\
	R(T),\hspace{0.5ex}\mbox{for}\hspace{0.5ex} t\in(T,1].
	\end{cases}$$
	It is clear that $\tilde{R}'(t)\equiv 0$ for $T<t<1.$
	Clearly,
	$$\|\tilde{R}\|_{W^{1,p}(0,1)}\leq\|\tilde{R}\|_{C[0,1]}+\|\tilde{R}'\|_{L^{p}(0,1)}\leq 2R_{0}+1.$$
	Since the embedding $W^{1,p}(0,1)\hookrightarrow C^{\alpha}[0,1]$ is continuous, we conclude that $1/\tilde{R}^{2}$ is continuous and
	$$\|\tilde{R}\|_{C^{\alpha}[0,1]}\leq C\|\tilde{R}\|_{W^{1,p}(0,1)}\leq C.$$
	We now define $\tilde{\sigma}$ to be the solution of \eqref{LRLR23Sa}, \eqref{LRLR23Sb}, \eqref{LRLR23Sc} (with $R$ replaced by $\tilde{R}$) in the time interval $[0,1]$.
	From $L^{p}$ theory \cite{Lady-68}, we see that $\tilde{\sigma}\in W^{2,1,p}(B_{1}\times(0,T))$ exists and
	\begin{equation}\label{LRLR28a}
	\|\tilde{\sigma}\|_{W^{2,1,p}(B_{1}\times(0,1))}\leq C.
	\end{equation}
	Recalling that the embedding
	$$W^{2,1,p}(B_{1}\times(0,1))\hookrightarrow C^{1+\alpha,(1+\alpha)/2}(\bar{B}_{1}\times[0,1])$$
	is continuous, it follows that
	\begin{equation}\label{LRLR28b}
	\|\tilde{\sigma}\|_{C^{1+\alpha,(1+\alpha)/2}(\bar{B}_{1}\times[0,1])}\leq C\|\tilde{\sigma}\|_{W^{2,1,p}(B_{1}\times(0,1))}\leq C.
	\end{equation}
	By uniqueness of parabolic equation, we see $\tilde{\sigma}$ and $\sigma$ are the same in the time interval $[0,T)$. The conclusion immediately follows from \eqref{LRLR28a} and \eqref{LRLR28b}.
\end{proof}

From \eqref{LRLR24u}, we deduce that
\begin{equation}\label{LRLR29u}\|u\|_{C^{1, 0}(\bar{Q}_{T})}\leq C,~~\|v\|_{C^{1, 0}(\bar{Q}_{T})}\leq C.\end{equation}
From \eqref{LRLR24R} and \eqref{LRLR24Ra}, we obtain that
\begin{equation}\label{LRLR29R} \|\hat{R}\|_{C^{1}[0,T]}\leq C. \end{equation}
In particular, for sufficiently small $T>0$
\begin{equation}\label{LRLR32R}\begin{aligned}
\max_{t\in[0,T]}|\hat{R}(t)-R_{0}|\leq CT\leq R_{0}/2,
|\hat{R}'(t)|_{L^{p}(0,T)}\leq CT^{1/p}\leq1.\end{aligned}\end{equation}
Therefore, $\hat{R}=\hat{R}(t)$ satisfies \eqref{LRLR21R} in the definition of $X_{T}$ provided $T$ is sufficiently small.

\begin{lem}\label{lem:104E}
	For any sufficiently small $T>0$, if $(R,E)\in X_{T}$, then the unique solution $\hat{E}$ of \eqref{LRLR24Ea}-\eqref{LRLR24Eb} is well-defined for $t\in[0,T]$ and satisfies
	\begin{equation}\label{LRLR29E}
	\|\hat{E}\|_{C^{1,1}(\bar{Q}_{T})}\leq C.
	\end{equation}
\end{lem}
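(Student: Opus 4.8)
The plan is to establish \eqref{LRLR29E} by analyzing the ODE \eqref{LRLR26} along characteristics, exploiting the uniform bounds already available for $\sigma$, $m$, $u$, and $v$. First I would note that by \autoref{lem:102sigma} and \eqref{LRLR29u}, the functions $\sigma$, $m$ and $v$ are all bounded in $C^{1,0}(\bar{Q}_{T})$ by a constant $C$ depending only on $R_{0}$ and $M_{1}$; moreover $v$ is Lipschitz in $r$ uniformly on $\bar{Q}_{T}$, so the characteristic ODE \eqref{LRLR25} has a well-defined flow $\xi(r,t;s)$ for $0\le s\le t\le T$ which, since $v(0,t)=v(1,t)=0$, stays in $[0,1]$. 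The right-hand side $\mathcal{Q}(r,t,E)=-\gamma m E+\phi(E)-E\mu(E)(\sigma-\bar\sigma)$ is smooth and, because $\mu$ and $\phi$ are bounded with bounded derivatives and $\sigma,m$ are bounded, it is globally Lipschitz in $E$ with constant $C$ and satisfies $|\mathcal{Q}(r,t,E)|\le C(1+|E|)$ on $\bar{Q}_T\times\R$.

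Next I would derive the $C^{0}$ bound: Gronwall applied to $\frac{d}{ds}\tilde E = \mathcal{Q}(\xi,s,\tilde E)$ with $\tilde E(r,t;0)=E_{0}(\xi(r,t;0))$ gives $|\hat E(r,t)| = |\tilde E(r,t;t)| \le (\|E_{0}\|_{L^{\infty}}+Ct)e^{Ct}$, which is $\le M_{1}=\|E_0\|_{L^\infty}+1$ for $T$ small enough, so $\hat E$ satisfies \eqref{LRLR21E}. Then for the $C^{1}$ bound in $r$: differentiating \eqref{LRLR26} in $r$ and writing $W=\partial_{r}\tilde E$, one gets a linear ODE $\frac{dW}{ds} = \mathcal{Q}_{E}(\xi,s,\tilde E)\,W + \mathcal{Q}_{r}(\xi,s,\tilde E)\,\partial_{r}\xi$ with initial value $W(r,t;0) = E_{0}'(\xi(r,t;0))\,\partial_{r}\xi(r,t;0)$, where the sensitivity $\partial_{r}\xi$ of the flow satisfies $\frac{d}{ds}\partial_{r}\xi = v_{r}(\xi,s)\,\partial_{r}\xi$, $\partial_r\xi(r,t;t)=1$, hence $|\partial_{r}\xi|\le e^{\|v_r\|_\infty T}\le C$. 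Since $\mathcal{Q}_{E}$, $\mathcal{Q}_{r}$ are bounded on the relevant range (using the already-established $\|\hat E\|_{\infty}\le M_1$ and $\sigma,m\in C^{1,0}$), Gronwall again yields $|\partial_{r}\hat E(r,t)| = |W(r,t;t)|\le C$ uniformly. For the $C^{1}$ bound in $t$, rather than differentiating the characteristic representation I would read it off the PDE itself: from \eqref{LRLR24Ea}, $\hat E_{t} = \mathcal{Q}(r,t,\hat E) - v\,\hat E_{r}$, and the right side is now bounded by $C$ using the bounds on $\hat E$, $\hat E_{r}$, $v$, $\sigma$, $m$ just obtained. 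Combining these gives $\|\hat E\|_{C^{1,1}(\bar Q_T)}\le C$.

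The main obstacle, and the point requiring the most care, is the $r$-regularity: one must control $\partial_{r}\xi$ (the derivative of the characteristic flow with respect to the endpoint $r$), which forces us to use that $v\in C^{1,0}(\bar Q_T)$ — precisely the content of the second estimate in \eqref{LRLR29u}, which in turn rests on $\sigma\in C^{1+\alpha,(1+\alpha)/2}$ from \autoref{lem:102sigma} (so that $g(\sigma,E)=\mu(E)(\sigma-\bar\sigma)$ and hence $u$ via \eqref{LRLR24u} is $C^{1}$ in $r$). A secondary subtlety is that the initial datum $E_{0}$ is only assumed $C^{1}[0,1]$ with $E_{0r}(0)=0$; the compatibility at $r=0$ together with $v(0,t)=0$ ensures the characteristic through the origin stays at the origin and that no extra boundary layer in $r$ appears, so $E_{0}'(\xi(r,t;0))$ stays bounded and the $C^{1}$ estimate closes. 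Everything else is a routine Gronwall argument once the bounds on $\sigma$, $m$, $u$, $v$ from \autoref{lem:102sigma} and \eqref{LRLR29u} are in hand, and the smallness of $T$ is used only to keep $\hat E$ inside the ball $\|\hat E\|_\infty\le M_1$.
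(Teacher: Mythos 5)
Your proposal is correct and follows essentially the same route as the paper: bound $\tilde E$ along characteristics by an ODE comparison/Gronwall argument (the paper uses a supersolution $y'=\Psi(y)$ where you use linear growth of $\mathcal{Q}$, a cosmetic difference), control $\xi_r$ via the variational equation using $\|v\|_{C^{1,0}}\le C$, differentiate \eqref{LRLR26} in $r$ to bound $\hat E_r$, and read $\hat E_t$ off the transport equation. The only detail the paper makes explicit that you omit is $\mathcal{Q}(r,t,0)\ge 0$, which keeps $\tilde E\ge 0$ so that the stated bounds on $\phi$ and $\mu$ (given only for $E\ge 0$) apply; this is easily added.
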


\begin{proof}
	Note that $0\leq\tilde{E}(r,t;0)\leq\|E_{0}\|\leq M_1$ and
	$$\mathcal{Q}(r,t,0)\geq0,~~\mathcal{Q}(r,t,E)\leq\Psi(E)$$
	for $r\in[0,1]$, $t\in(0,T]$, $E\geq0$, where $\Psi(E)$ is some positive smooth function defined in $[0,\infty)$.
	Let $y(s)$ denote the unique solution of the initial problem
	$$\frac{dy}{ds}=\Psi(y),\hspace{2ex} y(0)=M_{0}$$
	which exists at least in a finite interval $[0,h]$ for some $h>0$.
	Then by the comparison principle for ODE, we deduce that the solution $\tilde{E}$ of \eqref{LRLR26} satisfies
	$$0\leq\tilde{E}(r,t;s)\leq y(s),$$
	$s\in [0,h]$, where $\tilde{E}$ is a solution of \eqref{LRLR26}. This shows that $\tilde{E}$ exists and is bounded in $[0,1]\times[0,h]\times[0,h]$ provided $T>h$. In particular, $\hat{E}$ exist and is bounded in $\bar{Q}_{T}$ for $T\leq h$.
	
	From \eqref{LRLR29u} and \eqref{LRLR23v},  we see that $\|v\|_{C^{1,0}}\leq C$.
	It follows that $\xi$, defined by \eqref{LRLR25},
	belongs to $C^{1,1,1}$ and satisfies
	$$\|\xi\|_{L^{\infty}}+\|\xi_{r}\|_{L^{\infty}}
	+\|\xi_{t}\|_{L^{\infty}}+\|\xi_{s}\|_{L^{\infty}}\leq C.$$
	Recalling that $\mathcal{Q}(r,t,E)$ is $C^{1+\alpha}$ in $r$, $C^{(1+\alpha)/2}$ in $t$ and smooth in $\tilde{E}$,
	we deduce from \eqref{LRLR26} that $|\tilde{E}_{r}|<C$ and hence
	$$|\hat{E}_{r}|<C\mbox{ in }\bar{Q}_{T},$$
	here we write $\hat{E}_{r}(r,t)=\tilde{E}_{r}(r,t;t)$. Finally, we derive from equation \eqref{LRLR23Ea} that $\hat{E}_{t}(r,t)=- v(r,t)\hat{E}_{r}(r,t)+Q(\sigma,m,\hat{E})$ is also bounded in $\bar{Q}_{T}$.
	This completes the proof.
\end{proof}

\begin{cor}\label{cor:105E}
	For sufficiently small $T>0$, if $(R,E)\in X_{T}$, then $\hat{E}$ satisfies \eqref{LRLR21E}.
\end{cor}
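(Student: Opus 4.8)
The plan is to show that $\hat{E}$ inherits the bound $\|\hat E\|_{C^{0,0}(\bar Q_T)}\le M_1=\|E_0\|_{L^\infty}+1$ for sufficiently small $T$, by combining the a priori nonnegativity/upper bound already established in the proof of \autoref{lem:104E} with a quantitative Gr\"onwall-type estimate on the characteristic ODE. The point is that \autoref{lem:104E} gives a \emph{crude} bound (a constant $C$ depending on $R_0,M_1$ via the auxiliary function $\Psi$ and its flow $y(s)$), which need not be $\le M_1$; what remains is to trade time for sharpness.

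First I would recall from \eqref{LRLR26} that along each characteristic,
$$\tilde E(r,t;s)=E_0(\xi(r,t;0))+\int_0^s \mathcal Q(\xi(r,t;\tau),\tau,\tilde E(r,t;\tau))\,d\tau,$$
so that $\hat E(r,t)=\tilde E(r,t;t)$ satisfies $|\hat E(r,t)|\le \|E_0\|_{L^\infty}+\int_0^t |\mathcal Q(\xi,\tau,\tilde E)|\,d\tau$. Next I would use that on the bounded set provided by \autoref{lem:104E} — namely $0\le\tilde E\le y(s)\le y(h)$ for $s\in[0,T]$ with $T\le h$, together with the uniform bounds $\|\sigma\|_{L^\infty},\|m\|_{L^\infty}\le C$ from \autoref{lem:102sigma} — the function $\mathcal Q(r,t,E)=-\gamma m E+\phi(E)-E\mu(E)(\sigma-\bar\sigma)$ is bounded by some constant $C_0=C_0(R_0,M_1)$ on $[0,1]\times[0,T]\times[0,y(h)]$. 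Hence $|\hat E(r,t)|\le \|E_0\|_{L^\infty}+C_0 T$ for all $(r,t)\in\bar Q_T$.

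Choosing $T$ small enough that $C_0 T\le 1$ then yields $\|\hat E\|_{C^{0,0}(\bar Q_T)}\le \|E_0\|_{L^\infty}+1=M_1$, which is exactly \eqref{LRLR21E}; combined with $\hat E(r,0)=E_0(r)$ and the continuity of $\hat E$ on $\bar Q_T$ established in \autoref{lem:104E}, this shows $\hat E$ satisfies condition (ii) in \autoref{defn:XT}. Since the constraints on $\hat R$ were already checked in \eqref{LRLR32R}, the pair $(\hat R,\hat E)$ lies in $X_T$. The only mildly delicate point — and thus the main thing to be careful about — is bookkeeping of the several smallness requirements on $T$: one needs $T\le h$ (so the comparison bound from $y$ is available and $\mathcal Q$ is evaluated only on the compact set where it is bounded), $T$ small for \eqref{LRLR32R}, and $C_0 T\le 1$ here; taking the minimum of these finitely many thresholds, all depending only on $R_0$ and $M_1$, gives the admissible $T$.
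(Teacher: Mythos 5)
Your proposal is correct and follows essentially the same route as the paper: the paper's proof simply invokes the bound $|\hat{E}(r,t)|\leq\|E_{0}\|_{L^{\infty}}+CT$ (a consequence of the $C^{1,1}$ estimate of \autoref{lem:104E}, i.e.\ of the boundedness of $\hat E_t$, which you re-derive by integrating $\mathcal{Q}$ along characteristics) and then takes $T$ small so that $CT\leq 1$. Your extra bookkeeping of the several smallness thresholds on $T$ is a harmless, slightly more careful version of the same argument.
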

\begin{proof}
	From \autoref{lem:104E}, we have
	$$|\hat{E}(r,t)|\leq\|E_{0}\|_{L^{\infty}}+ CT.$$
	This shows that $\hat{E}=\hat{E}(r,t)$ satisfies \eqref{LRLR21E} provided $T$ is sufficiently small.
\end{proof}

Combing \eqref{LRLR32R} and \autoref{cor:105E}, we have established the following
\begin{prop}\label{prop:106}
	For sufficiently small $T>0$, $\mathcal{F}$ is well-defined and maps $X_{T}$ into itself.
\end{prop}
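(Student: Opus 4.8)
The goal is to show that for sufficiently small $T>0$, the map $\mathcal{F}:X_T\to X_T$ is well-defined and maps $X_T$ into itself. The plan is to collect the pieces already assembled in the excerpt and verify that $(\hat R,\hat E)=\mathcal{F}(R,E)$ lands in $X_T$, i.e.\ satisfies the defining constraints (i) and (ii) of \autoref{defn:XT}, together with the correct initial conditions and regularity class.

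\textbf{Step 1: Well-definedness of the intermediate data.} Given $(R,E)\in X_T$, the bounds \eqref{LRLR21R} on $R$ put the coefficient $R'/R$ in $L^p(0,T)$ with a uniform bound, and $1/R^2$ in $C^\alpha$; hence by the $L^p$ parabolic theory (Theorem 9.1 of Ch.\ IV in \cite{Lady-68}) the problems \eqref{LRLR23Sa}--\eqref{LRLR23Sc} and \eqref{LRLR23ma}--\eqref{LRLR23mc} have unique strong solutions $\sigma,m\in W^{2,1,p}(B_1\times(0,T))$. \autoref{lem:102sigma} gives the uniform estimates, and then formula \eqref{LRLR24u} yields the bounds \eqref{LRLR29u} for $u$ and $v$. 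Since $v$ is continuous and Lipschitz in $r$ and vanishes at $r=0,1$ by \eqref{LRLR23v}, the characteristic ODE \eqref{LRLR25} has a well-defined flow $\xi(r,t;s)$ that stays in $[0,1]$, so the transported ODE \eqref{LRLR26} has a unique local solution $\tilde E$, and $\hat E(r,t):=\tilde E(r,t;t)$ solves \eqref{LRLR24Ea}--\eqref{LRLR24Eb}. The $R$-component $\hat R$ is given explicitly by \eqref{LRLR24R}. Thus $\mathcal{F}(R,E)$ is unambiguously defined for all sufficiently small $T$.

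\textbf{Step 2: Verifying the constraints.} For the $R$-part, \eqref{LRLR29R} gives $\|\hat R\|_{C^1[0,T]}\le C$ with $C$ independent of $T\in(0,1)$, hence \eqref{LRLR32R}: $\max_{[0,T]}|\hat R(t)-R_0|\le CT$ and $\|\hat R'\|_{L^p(0,T)}\le CT^{1/p}$. Choosing $T$ small enough that $CT\le R_0/2$ and $CT^{1/p}\le 1$ forces $\hat R$ to satisfy \eqref{LRLR21R}; also $\hat R(0)=R_0$ holds by \eqref{LRLR24Rb}. For the $E$-part, \autoref{lem:104E} gives $\|\hat E\|_{C^{1,1}(\bar Q_T)}\le C$, in particular the sup-norm estimate $|\hat E(r,t)|\le\|E_0\|_{L^\infty}+CT$ of \autoref{cor:105E}; taking $T$ small so that $CT\le 1$ gives $\|\hat E\|_{C^{0,0}(\bar Q_T)}\le\|E_0\|_{L^\infty}+1=M_1$, which is \eqref{LRLR21E}, and $\hat E(r,0)=E_0(r)$ by \eqref{LRLR24Eb}. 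Finally $\hat R\in W^{1,p}(0,T)$ and $\hat E\in C^{0,0}(\bar Q_T)$, so $(\hat R,\hat E)\in X_T$. Taking the minimum of the finitely many smallness thresholds on $T$ (including $T\le h$ from \autoref{lem:104E}) completes the argument.

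This proposition is essentially a bookkeeping statement: all the analytic content has been isolated in \autoref{lem:102sigma}, \autoref{lem:104E}, \autoref{cor:105E}, and the estimates \eqref{LRLR29u}--\eqref{LRLR32R}. The only thing to be careful about is that every constant $C$ appearing is genuinely independent of $T\in(0,1)$ — which was arranged by the convention on $C$ and by extending $R$ to the fixed interval $[0,1]$ in the proof of \autoref{lem:102sigma} — so that shrinking $T$ cannot destroy any of the bounds. I do not anticipate a real obstacle here; the substantive difficulty (the contraction estimate) is deferred to the next step of the fixed-point argument, not to this proposition.
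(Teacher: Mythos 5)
Your argument is correct and follows the same route as the paper, which establishes \autoref{prop:106} simply by combining \eqref{LRLR32R} (so that $\hat{R}$ satisfies \eqref{LRLR21R}) with \autoref{cor:105E} (so that $\hat{E}$ satisfies \eqref{LRLR21E}), the well-definedness having already been set up via \autoref{lem:102sigma}, the formula \eqref{LRLR24u}, and the characteristics construction. Your write-up merely makes explicit the bookkeeping that the paper leaves to the surrounding text.
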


We now establish that $\mathcal{F}$ is a contraction mapping for sufficiently small $T$. Let $(R_{i},E_{i})\in X_{T}$ for $i=1, 2$. Let $\sigma_{i}$, $m_{i}$, $u_{i}$, $v_{i}$, $(\hat{R}_{i},\hat{E}_{i})=\mathcal{F}(R_{i},E_{i})$ be the solution corresponding to $(R_{i},E_{i})$.
Recall
\begin{equation}\begin{aligned}
&d=d((R_{1},E_{1}), (R_{2},E_{2}))\\
=&\|R_{1}-R_{2}\|_{C[0,T]}
+\|R'_{1}-R'_{2}\|_{L^{p}(0,T)}+\|E_{1}-E_{2}\|_{C^{0,0}(\bar{Q}_{T})}.
\end{aligned}\end{equation}
From equation \eqref{LRLR23Sa}, we see $\sigma_{*}=\sigma_{1}-\sigma_{2}$ satisfies
$$\begin{cases}
c\frac{\partial\sigma^{*}}{\partial t} = \frac{1}{R_{1}^{2}(t)}(\frac{\partial^{2}\sigma^{*}}{\partial r^{2}} + \frac{2}{r}\frac{\partial\sigma^{*}}{\partial r}) + \frac{R'_{1}(t) }{R_{1}(t)}r \frac{\partial\sigma^{*}}{\partial r}-\lambda\sigma^{*}+h(r,t),\\
(\sigma^{*})_{r}(0,t)=0,\sigma^{*}(1,t)=0,\\
\sigma^{*}(r,0)=0,
\end{cases}$$
where
$$h(r,t)=(\frac{1}{R_{1}^{2}}-\frac{1}{R_{2}^{2}})(\frac{\partial^{2}\sigma_{2}}{\partial r^{2}} + \frac{2}{r}\frac{\partial\sigma_{2}}{\partial r}) + (\frac{R'_{1}}{R_{1}}-\frac{R'_{2}}{R_{2}})r\frac{\partial\sigma_{2}}{\partial r}.$$
From the estimates of $\sigma$ (\autoref{lem:102sigma}), we derive
$$\|h\|_{L^{p}(B_{1}\times(0,T))}\leq C(\|R_{1}-R_{2}\|_{C[0,T]} +\|R'_{1}-R'_{2}\|_{L^{p}(0,T)}).$$
Using the same method as that in \autoref{lem:102sigma}, we deduce
\begin{equation}\label{LRLR34sigma}\begin{aligned}
&\|\sigma_{1}-\sigma_{2}\|_{W^{2,1,p}(B_{1}\times(0,T))}\leq Cd,\\
&\|\sigma_{1}-\sigma_{2}\|_{C^{1+\alpha,(1+\alpha)/2}(\bar{B}_{1}\times[0,T])}\leq Cd.
\end{aligned}\end{equation}
Similarly,
\begin{equation}\label{LRLR34m}\begin{aligned}
&\|m_{1}-m_{2}\|_{W^{2,1,p}(B_{1}\times(0,T)}\leq Cd,\\
&\|m_{1}-m_{2}\|_{C^{1+\alpha,(1+\alpha)/2}(\bar{B}_{1}\times[0,T])}\leq Cd.
\end{aligned}\end{equation}
By the definition of $u_{i}$ and $v_{i}$ we have
\begin{equation}\label{LRLR34u}\begin{aligned}
\|u_{1}-u_{2}\|_{C^{1,0}(\bar{Q}_{T})}&\leq C(\|\sigma_{1}-\sigma_{2}\|_{\infty}+\|E_{1}-E_{2}\|_{\infty})\leq Cd, \\ \|v_{1}-v_{2}\|_{C^{1,0}(\bar{Q}_{T})}&\leq Cd.
\end{aligned}\end{equation}
Set $R^{*}=\hat{R}_{1}-\hat{R}_{2}$. Then, by direct calculations we see that $R^{*}$ satisfies
\begin{equation}\label{LRLR36a}\begin{cases}
\frac{d R^{*}(t)}{dt}=\bar{g}_{1}(t)R^{*}(t) + \bar{g}_{2}(t),\\
R^{*}(0)=0,
\end{cases}\end{equation}
or
\begin{equation}\label{LRLR36c}
R^{*}(t)=
\int_{0}^{t}\bar{g}_{2}(\tau)e^{\int_{\tau}^{t}g_{1}(\tilde{\tau})d\tilde{\tau}}d\tau,
\end{equation}
where $\bar{g}_{1}(t)=u_{1}(1,t)$, $\bar{g}_{2}(t)=R_{2}(t)(u_{1}(1,t)-u_{2}(1,t))$ satisfy
\begin{equation}\label{LRLR36b}
|\bar{g}_{1}(t)|\leq C, |\bar{g}_{2}(t)|\leq Cd, t\in[0,T]
\end{equation}
by using \eqref{LRLR29u}, \eqref{LRLR29R} and \eqref{LRLR34u}.
One can easily derive from \eqref{LRLR36a}-\eqref{LRLR36b} that
\begin{equation}\label{LRLR34R}\begin{aligned}
\|\hat{R}_{1}-\hat{R}_{2}\|_{C[0,T]}&\leq CTd,\\
\|\hat{R}_{1}'-\hat{R}_{2}'\|_{C[0,T]}&\leq Cd,
\|\hat{R}_{1}'-\hat{R}_{2}'\|_{L^{p}(0,T)}&\leq CT^{1/p}d.
\end{aligned}\end{equation}

\begin{lem}\label{lem:107E}
	There holds
	\begin{equation}\label{LRLR34E}
	\|\hat{E_{1}}-\hat{E_{2}}\|_{C(\bar{Q}_{T})}\leq CTd
	\end{equation}
	for sufficiently small $T$.
\end{lem}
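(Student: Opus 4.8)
The plan is to estimate the difference $\hat E^* = \hat E_1 - \hat E_2$ along the characteristic curves, exactly in the spirit of the construction used for local existence, and to show that every contribution to $\hat E^*$ carries a factor of $T$ (or a positive power of $T$) times the metric $d$. Recall that $\hat E_i(r,t) = \tilde E_i(r,t;t)$, where $\tilde E_i$ solves the ODE \eqref{LRLR26} along the characteristic $\xi_i(r,t;s)$ associated with the velocity $v_i$. So the first step is to write, for fixed $(r,t)$,
\begin{equation*}
\hat E_i(r,t) = E_0(\xi_i(r,t;0)) + \int_0^t \mathcal{Q}_i\bigl(\xi_i(r,t;s),s,\tilde E_i(r,t;s)\bigr)\,ds,
\end{equation*}
where $\mathcal{Q}_i(r,t,E) = Q(\sigma_i(r,t),m_i(r,t),E)$, and subtract the two identities.

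The second step is to control $\xi_1 - \xi_2$. Since $\dfrac{d}{ds}\bigl(\xi_1-\xi_2\bigr) = v_1(\xi_1,s) - v_2(\xi_2,s)$, one bounds the right-hand side by $\|v_1-v_2\|_{L^\infty}\! +\! \|(v_2)_r\|_{L^\infty}|\xi_1-\xi_2|$ and applies Gr\"onwall on $s\in[0,t]\subset[0,T]$; using \eqref{LRLR34u} and $\|v_i\|_{C^{1,0}}\le C$ this gives $\sup_s|\xi_1(r,t;s)-\xi_2(r,t;s)| \le C T\|v_1-v_2\|_{\infty} \le C T d$. In particular $|E_0(\xi_1(r,t;0)) - E_0(\xi_2(r,t;0))| \le \|E_0'\|_\infty\, C T d \le C T d$ since $E_0\in C^1[0,1]$. (Strictly this gives a bound $\le C|t|\,d$; on $[0,T]$ this is $\le CTd$, which is what is needed.)

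The third step is to estimate the integral term. Split the difference of integrands as
\begin{equation*}
\mathcal{Q}_1(\xi_1,s,\tilde E_1) - \mathcal{Q}_2(\xi_2,s,\tilde E_2)
= \bigl[\mathcal{Q}_1(\xi_1,s,\tilde E_1) - \mathcal{Q}_1(\xi_2,s,\tilde E_1)\bigr] + \bigl[\mathcal{Q}_1(\xi_2,s,\tilde E_1) - \mathcal{Q}_2(\xi_2,s,\tilde E_1)\bigr] + \bigl[\mathcal{Q}_2(\xi_2,s,\tilde E_1) - \mathcal{Q}_2(\xi_2,s,\tilde E_2)\bigr].
\end{equation*}
The first bracket is bounded by $C|\xi_1-\xi_2|\le CTd$ using the $C^{1+\alpha}$-regularity of $\mathcal{Q}_1$ in its spatial slot (which comes from the bounds on $\sigma_1,m_1$ in \autoref{lem:102sigma}); the second bracket is bounded by $C(\|\sigma_1-\sigma_2\|_\infty + \|m_1-m_2\|_\infty)\le Cd$ using smoothness of $Q$ in $(\sigma,m)$ together with \eqref{LRLR34sigma}–\eqref{LRLR34m}; the third bracket is bounded by $C|\tilde E_1(r,t;s)-\tilde E_2(r,t;s)|$ using smoothness of $Q$ in $E$ (Lipschitz on the bounded range guaranteed by \autoref{lem:104E}). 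Writing $\Phi(t) = \sup_{r,\,0\le s\le t}|\tilde E_1(r,t;s)-\tilde E_2(r,t;s)|$ — or more cleanly working with $\sup$ over the relevant triangle — one arrives at an inequality of the form $\Phi(t) \le CTd + C\int_0^t \Phi(s)\,ds$, and Gr\"onwall yields $\Phi(T) \le CT d\, e^{CT} \le C T d$ after absorbing $e^{CT}\le e^{C}$ into the constant (valid since $T<1$). Since $\hat E_i^*(r,t) = \tilde E_i^*(r,t;t)$, this gives \eqref{LRLR34E}.

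The main obstacle is bookkeeping with the three time variables $(r,t;s)$ in \eqref{LRLR26}: one must be careful that the Gr\"onwall argument is applied in the $s$-variable with $(r,t)$ frozen, yet the quantity $\hat E^*(r,t)$ sampled at $s=t$ must then be controlled uniformly in $(r,t)\in\bar Q_T$; the clean way is to take the supremum over the closed region $\{(r,t,s): 0\le r\le 1,\ 0\le s\le t\le T\}$ before closing the loop, and to note that the characteristic estimate and the coefficient bounds are all uniform there. Everything else — the regularity of $\mathcal{Q}_i$, the bounds on $v_i$ and $v_1-v_2$, the Lipschitz bound on $E_0$ — is already in hand from \autoref{lem:102sigma}, \autoref{lem:104E}, \eqref{LRLR29u}, \eqref{LRLR34u}, so no genuinely new estimate is required.
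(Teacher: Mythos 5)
Your argument is correct, and it reaches \eqref{LRLR34E} by a slightly different decomposition than the paper's. The paper keeps a single characteristic family (that of $v_1$): it writes the equation for $\hat E^*=\hat E_1-\hat E_2$ as $\partial_t\hat E^*+v_1\partial_r\hat E^*-A\hat E^*=h$, where the velocity discrepancy is absorbed into the source through the term $-(v_1-v_2)\partial_r\hat E_2$, and then integrates along the $v_1$-characteristics; the price is that it must invoke the a priori bound $\|\partial_r\hat E_2\|_\infty\le C$ from \autoref{lem:104E}. You instead follow each solution along its own characteristics $\xi_1,\xi_2$, control $|\xi_1-\xi_2|\le CTd$ by Gr\"onwall using \eqref{LRLR34u}, and pay for the change of base point through the Lipschitz continuity of $E_0$ and of $\mathcal Q_1$ in its spatial slot (the latter supplied by the $C^{1+\alpha}$ bounds of \autoref{lem:102sigma}). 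The two routes are morally equivalent --- indeed the spatial regularity of $E_0$ and $\mathcal Q_1$ is exactly what produces the bound on $\partial_r\hat E_2$ in the first place --- but yours has the minor advantage of not needing \autoref{lem:104E} as a black box, at the cost of the extra characteristic-difference estimate and the three-variable bookkeeping, which you handle correctly by freezing $(r,t)$, running Gr\"onwall in $s$, and only then taking the supremum over the triangle $\{0\le s\le t\le T\}$. Your splitting of the integrand into three brackets mirrors the paper's splitting of $h$, and the final Gr\"onwall step with $e^{CT}\le e^{C}$ absorbed into the constant matches the paper's conclusion. No gap.
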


\begin{proof}
	Note that $\hat{E}^{*}=\hat{E}_{1}-\hat{E}_{2}$ satisfies
	\begin{equation}\label{LRLR35}\begin{cases}
	\displaystyle\frac{\partial\hat{E}^{*}}{\partial t}+\frac{\partial\hat{E}^{*}}{\partial r} v_{1}-A(r,t)\hat{E}^{*}=h(r,t),\\
	\hat{E}^{*}(r,0)=0,
	\end{cases}\end{equation}
	where
	$$A(r,t)=\int_{0}^{1}Q(\sigma_{2}(r,t), m_{2}(r,t),\theta\hat{E}_{1}(r,t)+(1-\theta)\hat{E}_{1}(r,t))d\theta,$$
	$$\begin{aligned}h(r,t)=&-(v_{1}-v_{2})\frac{\partial\hat{E}_{2}}{\partial r} + [Q(\sigma_{1}, m_{1}, \hat{E}_{1})-Q(\sigma_{2}, m_{1}, \hat{E}_{1})]\\
	&+[Q(\sigma_{2}, m_{1}, \hat{E}_{1})-Q(\sigma_{2}, m_{2}, \hat{E}_{1})].
	\end{aligned}$$
	Using the estimates \eqref{LRLR29sigma},\eqref{LRLR34sigma} for $\sigma_{i}$, the estimates \eqref{LRLR29m},\eqref{LRLR34m} for $m_{i}$, the estimates \eqref{LRLR29u},\eqref{LRLR34u} for $u_{i},v_{i}$ and the estimates \eqref{LRLR29E} for $\hat{E}_{i}$, we get
	$$|A(r,t)|\leq C, |h(r,t)|\leq Cd,~~ (r,t)\in\bar{Q}_{T}.$$
	Hence, integrating \eqref{LRLR35} along its characteristics as before, we find that
	$$\|\hat{E}^{*}\|_{C(\bar{Q}_{T})}\leq CTd.$$
	This finishes the proof.
\end{proof}

Now we show the local existence and uniqueness of solution to \eqref{LRLR23}-\eqref{LRLR23c}.

\begin{thm}\label{thm:108UniqueLocal}
	Assume that the initial data satisfy \eqref{LRLR23ini}.
	Then there exists a $T>0$ which only depends on $\|\sigma_{0}\|_{W^{2,p}(B_{1})}$,
	$\|m_{0}\|_{W^{2,p}(B_{1})}$, $R_{0}$, and $\|E_{0}\|_{C^{1}[0,1]}$,
	such that problem \eqref{LRLR23}-\eqref{LRLR23c} admits a unique solution
	$\sigma, m\in W^{2,1,p}(B_{1}\times(0,T))$, $u, v\in C^{1,0}(\bar{Q}_{T})$,
	$E\in C^{1,1}(\bar{Q}_{T})$, $R\in C^{1}[0,T]$.
\end{thm}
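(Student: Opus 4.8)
The plan is to conclude directly from the Banach fixed point theorem, since all the hard analysis is already contained in \autoref{lem:102sigma} through \autoref{lem:107E}. By \autoref{prop:106}, for $T$ small enough $\mathcal{F}$ maps the complete metric space $(X_T,d)$ into itself, so it remains only to check that $\mathcal{F}$ is a strict contraction once $T$ is further restricted. Adding the three estimates already obtained — $\|\hat R_1-\hat R_2\|_{C[0,T]}\le CTd$ and $\|\hat R_1'-\hat R_2'\|_{L^{p}(0,T)}\le CT^{1/p}d$ from \eqref{LRLR34R}, together with $\|\hat E_1-\hat E_2\|_{C(\bar Q_T)}\le CTd$ from \autoref{lem:107E} — yields
\begin{equation*}
d\bigl(\mathcal{F}(R_1,E_1),\mathcal{F}(R_2,E_2)\bigr)\le \bigl(2CT+CT^{1/p}\bigr)\,d\bigl((R_1,E_1),(R_2,E_2)\bigr).
\end{equation*}
Since $2CT+CT^{1/p}\to 0$ as $T\to 0^{+}$, I would fix $T>0$, depending only on $R_0$, on $M_1=\|E_0\|_{L^\infty}+1$, and on the parabolic constants (hence on $\|\sigma_0\|_{W^{2,p}(B_1)}$ and $\|m_0\|_{W^{2,p}(B_1)}$ through $C$), so small that $2CT+CT^{1/p}\le \tfrac12$ and simultaneously all the smallness requirements used in \autoref{prop:106} hold (those behind \eqref{LRLR32R} and the constraint relating $T$ to the ODE lifespan $h$ in the proof of \autoref{lem:104E}). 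Then $\mathcal{F}$ has a unique fixed point $(R,E)\in X_T$.

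Next I would verify that this fixed point is a genuine solution of the system. Given $(R,E)$, define $\sigma$ and $m$ as the solutions of \eqref{LRLR23Sa}--\eqref{LRLR23Sc} and \eqref{LRLR23ma}--\eqref{LRLR23mc}, then $u$ by \eqref{LRLR24u}, $v$ by \eqref{LRLR23v}, and $p$ from the linear elliptic problem in Section 3.1; the identity $\mathcal{F}(R,E)=(R,E)$ is precisely equations \eqref{LRLR23Ea} and \eqref{LRLR23R} with the prescribed initial data, while the remaining equations and boundary conditions hold by construction. The regularity claims are read off directly: \autoref{lem:102sigma} gives $\sigma,m\in W^{2,1,p}(B_1\times(0,T))$, the estimate \eqref{LRLR29u} gives $u,v\in C^{1,0}(\bar Q_T)$, \autoref{lem:104E} gives $E\in C^{1,1}(\bar Q_T)$, and although $X_T$ only records $R\in W^{1,p}$, the relation $R'(t)=R(t)u(1,t)$ with $u(1,\cdot)$ continuous in $t$ upgrades this to $R\in C^{1}[0,T]$.

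For uniqueness, the fixed point theorem already gives it \emph{within} $X_T$, so the remaining point is to show that any solution with the regularity stated in the theorem must lie in $X_T$ after possibly shrinking $T$. By the initial conditions \eqref{LRLR23ini} and continuity, such a solution satisfies $R(0)=R_0$, $\|R'\|_{L^{p}(0,T')}\le 1$, $R_0/2\le R\le 2R_0$ and $\|E\|_{C^{0,0}}\le M_1$ on some subinterval $[0,T']$, hence is a fixed point of $\mathcal{F}$ there and coincides with the constructed solution; a standard continuation argument — the set of times at which the two solutions agree is nonempty, closed, and relatively open in $[0,T]$ — then propagates the equality to the whole interval. I expect this last reduction to be the only genuinely delicate step: one must confirm that the qualitative hypotheses force the quantitative constraints \eqref{LRLR21R} and \eqref{LRLR21E} on a short time interval, after which the contraction property closes the argument; everything else is assembly of the estimates established above.
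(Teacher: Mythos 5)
Your proposal is correct and takes essentially the same route as the paper: the paper's own proof is precisely the one-line assembly of \autoref{prop:106}, the estimates \eqref{LRLR34R}, and \autoref{lem:107E} into the Banach fixed point theorem on $(X_T,d)$. The extra care you take in the last step — observing that the fixed point theorem only gives uniqueness \emph{within} $X_T$ and that one must check any solution in the stated regularity class enters $X_T$ on a short interval and then run a continuation argument — is a genuine gap the paper glosses over, and your sketch of how to close it is sound.
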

\begin{proof}
	From \autoref{prop:106}, \eqref{LRLR34R} and \autoref{lem:107E}, we deduce that for sufficiently small $T>0$, $\mathcal{F}$ is a contraction mapping from $X_{T}$ into itself. Therefore, there is a unique fixed point in $X_{T}$ for $\mathcal{F}$ , and thus \eqref{LRLR23}-\eqref{LRLR23c} admits a unique solution $(\sigma,m,E,u,v,R)$ in the time interval $[0,T]$.
\end{proof}

\begin{rmk}
	This theorem shows the local existence and uniqueness of radially symmetric solution of our equation.
	If the $E_{0}$ satisfies $(E_{0})_{r}(0)=0$, then one can prove that $E_{r}(0,t)=0$ for all $t$.
	
	If initial data $\sigma_{0}, m_{0}, E_{0}$ are radially symmetric function, and $\sigma_{0}, m_{0}\in C^{2,\alpha}(\bar{B}_{1})$ (for some $\alpha\in(0,1)$), $E_{0}\in C^{1}(\bar{B}_{1})$, $R_{0}>0$, then the standard PDE theory tells us the solution obtained in \autoref{thm:108UniqueLocal} are also radially symmetric  and are more regular, i.e. $\sigma, m\in C^{2+\alpha,(2+\alpha)/2}(\bar{B}_{1}\times[0,T])$, $u, v\in C^{1,0}(\bar{Q}_{T})$,
	$E\in C^{1}(\bar{B}_{1}\times[0,T])$, $R\in C^{1}[0,T]$.
\end{rmk}

\subsection{Global existence}

\begin{thm}\label{thm:109}
	Assume that the initial data satisfy \eqref{LRLR23ini}. Then problem \eqref{LRLR23}-\eqref{LRLR23c} admits a unique solution $(\sigma,m,u,v,E,R)$ globally in time.
\end{thm}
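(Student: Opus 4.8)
The plan is to extend the local solution from \autoref{thm:108UniqueLocal} to a global one by the standard continuation argument: it suffices to show that on any finite interval $[0,T^*)$ of existence, the relevant norms of $(\sigma,m,E,u,v,R)$ stay bounded (and $R$ stays bounded away from $0$), so that the local existence theorem can be re-applied starting from any time close to $T^*$, contradicting maximality. Concretely, let $[0,T_{\max})$ be the maximal interval of existence; I would assume $T_{\max}<\infty$ and derive a contradiction by producing a priori bounds on $[0,T_{\max})$ that depend only on the data and on $T_{\max}$.

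The key a priori estimates, carried out in order, are as follows. First, for $\sigma$: by the maximum principle applied to \eqref{LRLR23Sa} with boundary value $1$ and the dissipative term $-\lambda\sigma$, one gets $0\le\sigma\le\max\{1,\|\sigma_0\|_\infty\}$, hence $0\le\sigma\le 1$ after noting $\sigma_0\le 1$ is natural (or at worst $\sigma$ is bounded by a constant depending only on the data). Similarly for $m$: from \eqref{LRLR23ma}, the maximum principle gives $0\le m\le\max\{\alpha/\beta,\|m_0\|_\infty\}=:M_m$, a bound independent of $T_{\max}$. Second, for $u$ and $v$: since $\mu(E)$ is bounded above and $|\sigma-\bar\sigma|$ is bounded, formula \eqref{LRLR24u} gives $\|u\|_{C^{1,0}}\le C$ and $\|v\|_{C^{1,0}}\le C$ with $C$ depending only on the data and on the (still to be controlled) sup-norm of $E$; in particular $|u(1,t)|\le C$. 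Third, for $R$: from \eqref{LRLR23R}, $R(t)=R_0\exp(\int_0^t u(1,\tau)\,d\tau)$, so $R_0 e^{-CT_{\max}}\le R(t)\le R_0 e^{CT_{\max}}$, keeping $R$ bounded and bounded away from zero on $[0,T_{\max})$. Fourth, and most delicate, for $E$: along characteristics \eqref{LRLR26}, $\tilde E$ solves $d\tilde E/ds=\mathcal Q(\cdots,\tilde E)$ with $\mathcal Q(r,t,0)\ge 0$ (since $\phi(E)>0$) and an upper bound $\mathcal Q(r,t,E)\le -\gamma m E+\phi(E)-E\mu(E)(\sigma-\bar\sigma)\le C_1+C_2 E$ using $\sigma\ge 0$, $\sigma-\bar\sigma\ge-\bar\sigma$, $m\ge 0$, and the boundedness of $\mu,\phi$; Gronwall then yields $0\le E(r,t)\le (\,\|E_0\|_\infty+C_1 t\,)e^{C_2 t}$, again bounded on $[0,T_{\max})$. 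This closes the loop: the $E$-bound feeds back into the $u,v$ bounds making them genuinely uniform on $[0,T_{\max})$.

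With the sup-norm bounds in hand, the higher-norm bounds follow exactly as in Section 3: the bound on $R'/R=u(1,t)$ gives $R\in W^{1,\infty}(0,T_{\max})\subset W^{1,p}$, then $L^p$ parabolic theory (Theorem~9.1, Chapter IV of \cite{Lady-68}) gives $\sigma,m\in W^{2,1,p}(B_1\times(0,T_{\max}))$ and hence $C^{1+\alpha,(1+\alpha)/2}$ bounds; differentiating the characteristic ODE as in \autoref{lem:104E} gives $\|E_r\|_\infty\le C$ and then $\|E_t\|_\infty\le C$ on $[0,T_{\max})$. Therefore $\lim_{t\to T_{\max}^-}(\sigma,m,E,R)(\cdot,t)$ exists in the spaces required as initial data by \autoref{thm:108UniqueLocal} (with $R(T_{\max})>0$), so the solution can be continued past $T_{\max}$, contradicting maximality. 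Uniqueness on $[0,\infty)$ follows from the local uniqueness in \autoref{thm:108UniqueLocal} applied on each subinterval.

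The main obstacle is the a priori bound on $E$: unlike $\sigma$ and $m$, the equation for $E$ is hyperbolic with no diffusive smoothing and no dissipative sign in general — the coupling term $-E\mu(E)(\sigma-\bar\sigma)$ can have either sign depending on whether $\sigma\gtrless\bar\sigma$. The point I would emphasize is that the right structural inequality $\mathcal Q(r,t,E)\le C_1+C_2 E$ still holds because $\sigma$ is already bounded (so $\sigma-\bar\sigma\le 1-\bar\sigma$ is bounded above) and $\phi,\mu$ are bounded, which is exactly enough for a linear-growth Gronwall estimate and hence no blow-up in finite time; lower bound $E\ge 0$ comes from $\mathcal Q(r,t,0)\ge 0$. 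Everything else is a routine repetition of the Section~3 machinery with $T$ replaced by an arbitrary finite time.
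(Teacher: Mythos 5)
Your proposal is correct and follows essentially the same route as the paper: a continuation argument from the maximal existence interval, maximum-principle bounds for $\sigma$ and $m$, uniform bounds for $u$, $v$ and hence two-sided exponential bounds for $R$, a linear-growth Gronwall estimate for $E$ along characteristics, and then the higher-order estimates of Section~3 to restart the local theorem near the final time. The only cosmetic remark is that no ``loop'' between the $E$-bound and the $u$-bound is needed, since $\mu(E)$ is assumed bounded above and below by positive constants independently of $E$, so the bound on $u$ from \eqref{LRLR24u} is immediate — exactly as the paper does it.
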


\begin{proof}
	Suppose to the contrary that $[0,\tilde{T})$ is the maximum time interval $(\tilde{T}<\infty)$ for the existence of the solution. We proceed to derive necessary estimates for the global existence.
	
	Employing the maximum principle for parabolic equations, we deduce that
	\begin{equation}\label{LRLR41}
	0<\sigma(r,t), m(r,t)\leq \max\{\|\sigma_{0}\|_{L^{\infty}},\frac{\alpha}{\beta},\|m_{0}\|_{L^{\infty}}\}
	\end{equation}
	for all $(r,t)\in[0,1]\times[0,\tilde{T})$. We shall denote by $C$  various constant which is independent of $\tilde{T}$, and by $C(\tilde{T})$  various constants which depends on $\tilde{T}\in(0,\infty)$.
	Since $\mu(E)$ is decreasing and bounded, we obtain from \eqref{LRLR24u} and \eqref{LRLR41} that
	\begin{equation}\label{LRLR42}
	|u(r,t)|\leq C,|u_{r}(r,t)|\leq C\texttt{ for all }(r,t)\in[0,1]\times[0,\tilde{T}).
	\end{equation}
	From equation \eqref{LRLR23R} and \eqref{LRLR42}, we derive that
	\begin{equation}\label{LRLR43}
	R_{0}e^{-C\tilde{T}}\leq R(t)\leq R_{0}e^{C\tilde{T}},~~|R'(t)|\leq CR_{0}e^{C\tilde{T}}\texttt{ for all }t\in[0,\tilde{T}).
	\end{equation}
	By the $L^{p}$ theory (see \cite{Lady-68} or \cite{Lieberman-96}) and Sobolev inequalities, we get
	\begin{equation}\label{LRLR44}\begin{aligned}
	&\|\sigma(r,t), m(r,t)\|_{W^{2,1,p}(B_{1}\times[0,\tilde{T}))}\leq C(\tilde{T}),\\
	&\|\sigma(r,t), m(r,t)\|_{C^{1+\alpha,\frac{1+\alpha}{2}}(\bar{B}_{1}\times[0,\tilde{T}])}\leq C(\tilde{T}).
	\end{aligned}\end{equation}
	From the bounds \eqref{LRLR41} for $\sigma$ and $m$ and the assumption on $Q$, we obtain that $\mathcal{Q}(r,t,E)=Q(\sigma(r,t),m(r,t),E)$ satisfies
	$$|\mathcal{Q}(r,t,E)|\leq C^{*}(E+1), r\in[0,1],t\in[0,\tilde{T}), E\geq0$$
	for some constant $C^{*}$ which is independent of $\tilde{T}\in(0,\infty)$.
	Note that $E(r,t)=\tilde{E}(r,t;t)$ and $\tilde{E}(r,t;s)$ satisfies
	\begin{equation}\begin{cases}
	\displaystyle\frac{d\tilde{E}(r,t;s)}{ds} = \mathcal{Q}(\xi(r,t;s),s,\tilde{E}(r,t;s)),\hspace{1em}0<s<t,\\
	\tilde{E}(r,t;0)=E_{0}(\xi(r,t;0)).
	\end{cases}\end{equation}
	We conclude that
	$$|\tilde{E}(r,t;s)|\leq (\|E_{0}\|_{C[0,1]}+1) e^{C^{*}\tilde{T}}-1, r\in[0,1],0\leq s\leq t<T.$$
	By using the bounds \eqref{LRLR42} for $u$ and then for $v$, we see that the character curves $\xi(r,t;s)$ and its derivatives $\xi_{r}(r,t;s)$ are bounded by some constant $C(\tilde{T})$. Combining this with \eqref{LRLR44}, one obtain
	$$|\tilde{E}_{r}(r,t;s)|\leq C(\tilde{T}), r\in[0,1],0\leq s\leq t<T.$$
	This gives the bounds for $E_{r}$ and hence for $E_{t}$ by equation \eqref{LRLR23Ea}. Therefore,
	\begin{equation}\label{LRLR44}|E(r,t)|+|E_{r}(r,t)|+|E_{t}(r,t)|\leq C(\tilde{T})\texttt{ for all }(r,t)\in[0,1]\times[0,\tilde{T}).\end{equation}
	
Taking $\tilde{T}-\epsilon$ (where $0<\epsilon<\tilde{T}$ is arbitrary) as a new initial time, then we can extend the solution to $Q_{(\tilde{T}-\epsilon)+\delta}$ for some small $\delta>0$ proceeding as in the proof of \autoref{thm:108UniqueLocal}. Furthermore, the
	proof of \autoref{thm:108UniqueLocal} shows that $\delta$ depends only on an upper bound of the data at time $\tilde{T}-\epsilon$ and the the lower bounds of $R(\tilde{T}-\epsilon)$.
	By a priori estimate \eqref{LRLR42}-\eqref{LRLR44}, we find that $\delta$ depends only on $\tilde{T}$ (but $\delta$ is independent of $\epsilon$), i.e., $\delta=\delta(\tilde{T})$.
	If we take $\epsilon<\delta(\tilde{T})$, then we get
	$$(\tilde{T}-\epsilon)+\delta>\tilde{T},$$
	which contradicts the assumption that $[0,\tilde{T})$ is the maximum time interval for the existence of the solution. Therefore, the maximum time interval for the existence of the solution is $[0,\infty)$.
\end{proof}
\subsection{ The lower bound of $R(t)$}

In this subsection, we consider the case of $\mu(E)$ is near $0$ and study the lower bound of $R(t)$.

From  \eqref{meq}, we get 
$$u(t)=\frac{1}{R^{2}(t)}\int _0^{R(t)} \mu(E)(\sigma-\bar{\sigma})\rho^{2}d\rho.$$
Combining with equation \eqref{diR}, we have 
\begin{equation}\label{RE}
R^{2}R'(t)=\int_0^{R(t)} \mu(E)(\sigma-\bar{\sigma})\rho^2 d\rho.\end{equation}

From the maximum principle for parabolic equation, we see that
$$0<\sigma(r,t)<\hat{\sigma}\hspace{0.5ex}\text{for}\hspace{0.5ex} 0\leq r\leq R(t), t>0,$$
where $\hat{\sigma}:=\max\{1,\max{\sigma_{0}(r)}\}$. Note that $\mu(E)$ is bounded,
\begin{equation}\label{muEbd}\eta_{1}<\mu(E(r,t))<\eta_{2},\hspace{0.5ex}\text{for} \hspace{0.5ex}0\leq r\leq R(t), t>0\end{equation}
for some $0<\eta_{1}<\eta_{2}.$
\begin{thm}
	There exists a positive constant $\delta$, such that 
	$$R(t)\geq \delta\hspace{0.5em}\text{for all} \hspace{0.5em} t>0.$$
\end{thm}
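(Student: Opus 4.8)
The plan is to produce a threshold $r_{*}>0$ such that, whenever $R(t)$ has stayed at or below $r_{*}$ for a controlled length of time, one necessarily has $R'(t)>0$; combined with an elementary bound forbidding $R$ from collapsing too quickly, this keeps $R(t)$ bounded away from $0$. To see what is needed, split \eqref{RE} as
\[
R^{2}R'(t)=(1-\bar{\sigma})\int_{0}^{R(t)}\mu(E)\rho^{2}\,d\rho-\int_{0}^{R(t)}\mu(E)(1-\sigma)\rho^{2}\,d\rho .
\]
By \eqref{muEbd} and $\sigma>0$, the first integral is $\ge\frac{\eta_{1}(1-\bar{\sigma})}{3}R^{3}$ — here I use $\bar{\sigma}<1$, which is natural since the boundary nutrient level is $1$ — and the second is $\le\eta_{2}\int_{0}^{R}(1-\sigma)^{+}\rho^{2}\,d\rho$. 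Hence $R'(t)>0$ as soon as $\sigma(r,t)\ge 1-\epsilon_{0}$ on $[0,R(t)]$, where $\epsilon_{0}:=\frac{1-\bar{\sigma}}{2}$. On the other hand \eqref{RE}, \eqref{muEbd} and $0<\sigma<\hat{\sigma}$ give $-\frac{\eta_{2}\bar{\sigma}}{3}R\le R'(t)\le\frac{\eta_{2}\hat{\sigma}}{3}R$, so $R(t)\ge R(s)\,e^{-\eta_{2}\bar{\sigma}(t-s)/3}$ for $t\ge s$ and $|R'/R|$ is bounded by an absolute constant. Everything thus reduces to a pointwise lower bound on $\sigma$ when $R$ is small.

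To obtain it I would run a parabolic comparison for $w:=1-\sigma$ using \eqref{LRLR23Sa}, viewed on the three–dimensional ball $B_{1}$ so that $r=0$ is a regular interior point; $w$ satisfies $cw_{t}-\frac{1}{R^{2}}\Delta w-\frac{R'}{R}\,r\,\partial_{r}w+\lambda w=\lambda$ with $w(1,t)=0$. Let $[t_{0},t_{1}]$ be a maximal interval on which $R\le r_{*}$, so $R(t_{0})=r_{*}$ or $t_{0}=0$. On it I compare $w$ with the barrier
\[
\overline{w}(r,t)=\frac{\lambda R(t)^{2}}{3}\,(1-r^{2})+e^{-\lambda(t-t_{0})/c}.
\]
The polynomial term is a supersolution for the inhomogeneity: applying $-\frac{1}{R^{2}}\Delta$ to it produces $+2\lambda$, which dominates the source $\lambda$ once the remaining $R'$–dependent terms — an $O(r_{*}^{2})$ perturbation, by the bound on $|R'/R|$ and $R\le r_{*}$ — are absorbed; moreover it is $\le\frac{\lambda r_{*}^{2}}{3}$, hence small. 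The term $e^{-\lambda(t-t_{0})/c}$ solves the homogeneous equation (its $c\partial_{t}$–term cancels the zeroth–order $\lambda$–term), is positive at $r=1$ (so $\ge 0=w(1,t)$), and at $t=t_{0}$ equals $1\ge w(\cdot,t_{0})$ since $\sigma>0$. By the comparison principle (legitimate on $[t_{0},\min\{t_{1},T\}]$ for every $T$, where the equation is uniformly parabolic with bounded coefficients), $\sigma(r,t)\ge 1-\frac{\lambda r_{*}^{2}}{3}-e^{-\lambda(t-t_{0})/c}$ throughout $[t_{0},t_{1}]$. Hence $\sigma\ge 1-\epsilon_{0}$ once $\frac{\lambda r_{*}^{2}}{3}\le\frac{\epsilon_{0}}{2}$ and $t-t_{0}\ge T_{*}:=\frac{c}{\lambda}\ln\frac{2}{\epsilon_{0}}$.

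Now fix $r_{*}$ small enough for all the above conditions and assemble. On any maximal interval $[t_{0},t_{1}]$ with $R\le r_{*}$: for $t\in[t_{0},\min\{t_{0}+T_{*},t_{1}\}]$ the elementary bound gives $R(t)\ge R(t_{0})e^{-\eta_{2}\bar{\sigma}T_{*}/3}\ge\min\{R_{0},r_{*}\}\,e^{-\eta_{2}\bar{\sigma}T_{*}/3}$; for $t\in[t_{0}+T_{*},t_{1}]$ (if nonempty) the previous paragraph yields $\sigma\ge 1-\epsilon_{0}$, hence $R'(t)>0$, hence $R(t)\ge R(t_{0}+T_{*})\ge\min\{R_{0},r_{*}\}\,e^{-\eta_{2}\bar{\sigma}T_{*}/3}$; and outside every such interval $R(t)>r_{*}$. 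Therefore $R(t)\ge\delta:=\min\{R_{0},r_{*}\}\,e^{-\eta_{2}\bar{\sigma}T_{*}/3}>0$ for all $t>0$, which is the assertion.

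I expect the pointwise lower bound on $\sigma$ for small $R$ to be the main obstacle, and the point is genuinely the free boundary: because $B_{R(t)}$ moves one cannot simply compare $\sigma$ with the explicit steady state for a frozen radius (valid only while $R$ is monotone), and at the instant $R$ first drops to $r_{*}$ the interior profile of $\sigma$ is a priori uncontrolled. What makes the barrier above work is that when $R$ is small the diffusion term $\frac{1}{R^{2}}\Delta$ is strong enough to sustain only an $O(r_{*}^{2})$ depletion of $\sigma$ below its boundary value; the residual transient is harmless because $R$ can shrink at most at the $R$–independent rate $\eta_{2}\bar{\sigma}/3$, so letting it die over the fixed time $T_{*}$ costs $R$ only a bounded factor.
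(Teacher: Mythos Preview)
Your argument is correct and takes a genuinely different, cleaner route than the paper. The paper works in the original moving coordinates and proceeds in two stages: first it rules out $\lim R(t)=0$ by building a $\sinh$-based subsolution $v(r,t)=\bar\sigma\,\dfrac{R}{r}\dfrac{\sinh(Mr)}{\sinh(MR)}$ together with a decaying corrector; second, to exclude $\liminf R(t)<\delta_{2}$, it sets up a chain of comparison domains and invokes a Green's-function lower bound for the heat equation to propagate positivity of $\sigma$ across a time window, finally deriving $R'(t_{2})>0$ at a putative new minimum. You collapse both stages into one by passing to the fixed domain $B_{1}$ and comparing $w=1-\sigma$ with the explicit barrier $\dfrac{\lambda R^{2}}{3}(1-r^{2})+e^{-\lambda(t-t_{0})/c}$; the point that makes this work (and that the paper's $\sinh$ machinery obscures) is that on the rescaled ball the diffusion coefficient is $1/R^{2}$, so small $R$ forces $1-\sigma$ down to $O(R^{2})$ plus an exponentially decaying transient. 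Your ``maximal interval below $r_{*}$'' bookkeeping then replaces the paper's $\limsup/\liminf$ split. One cosmetic slip: from your two-term splitting the choice $\epsilon_{0}=\tfrac{1-\bar\sigma}{2}$ would need $\eta_{2}<2\eta_{1}$; but the conclusion $R'>0$ is nonetheless true with that $\epsilon_{0}$, since $\sigma\ge 1-\epsilon_{0}=\tfrac{1+\bar\sigma}{2}>\bar\sigma$ already makes the integrand $\mu(E)(\sigma-\bar\sigma)$ strictly positive, so no splitting is needed (alternatively, take $\epsilon_{0}=\tfrac{\eta_{1}(1-\bar\sigma)}{2\eta_{2}}$).
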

\begin{proof}
	We will prove this theorem in two steps:
	
	\textbf{Step 1.} We claim that $\limsup_{t\rightarrow\infty}R(t)>0.$
	We shall argue by contradiction. If the conclusion is false, then
	\begin{equation}\label{RR}\lim_{t\rightarrow\infty} R(t)=0.\end{equation}
	As in the reference \cite{JMBFF}, we let 
	\begin{equation}\label{vv}v(r,t)=\bar{\sigma}\frac{R(t)}{r}\frac{\sinh (Mr)}{\sinh (MR(t))}\hspace{0.5em}\text{for}\hspace{0.5em} t\geq t_0,\end{equation}
	where $M^{2}=\lambda+2+N$ and $N$ is a positive number. As $r\rightarrow 0^{+}$, we have that
	$$\frac{r}{\sinh r}=1-\frac{1}{6}r^2+\frac{7}{360}r^{4}+O(r^{6}),$$
	$$\frac{\sinh r}{r}\frac{d}{dr}\bigg(\frac{r}{\sinh r}\bigg)=-\frac{1}{3}r+\frac{1}{45}r^{3}+O(r^{5}).$$
	Note $\mu(E), \sigma$ are bounded, we assume that 
	$$|\mu(E)(\sigma-\bar{\sigma})|<\eta, \hspace{0.5em} r<R(t), t>0.$$
	Take a small $\delta_{0}$ satisfying 
	$$0<\delta_{0}<\sqrt{\frac{6}{M^{2}\eta}},$$
	then for $R\in(0,\delta_{0})$
	$$-\frac{MR}{3}<\bigg(\frac{\sinh r}{r}\frac{d}{dr}\bigg(\frac{r}{\sinh r}\bigg)\bigg)\bigg|_{r=MR}<0,$$
	$$1>\frac{MR}{\sinh(MR)}>\max\bigg\{\frac{\eta M^{2}\delta^2_{0}}{6},\frac{1+\bar{\sigma}}{2}\bigg\}.$$
	From \eqref{RE},
	$$|R'(t)|<\frac{\eta R(t)}{3}\hspace{0.5em}\text{for all}\hspace{0.5em} t>0.$$
	Thus, 
	$$\begin{aligned}
	\left|\frac{dv}{dt}\right|&=\left|v(r,t)\frac{\sinh r}{r}\frac{d}{dr}\bigg(\frac{r}{\sinh r}\bigg)\right|_{r=MR}|MR'(t)|\\
	&\leq\frac{M^{2}}{3} R(t)|R'(t)|\\
	&\leq\frac{\eta M^2}{3}(R(t))^{2}.
	\end{aligned}$$
	
	Then, if $R(t)<\delta_{0}$, we have 
	$$\begin{aligned}
	v_{t}-\Delta v+\lambda v&=v_{t}-2v-Nv\\
	&<v_{t}-2v\\
	&\leq \frac{\eta M^{2}R^{2}}{3}-2\frac{\eta M^{2}\delta^2_{0}}{6}\\
	&<0.
	\end{aligned}$$
	From \eqref{RR}, there is a large time $T_{1}$ such that $R(t)<\delta_{0}, t\geq T_{1}$ and
	\begin{equation}v_{t}-\Delta v+\lambda v\leq 0, \hspace{0.5em}\text{if}\hspace{0.5em} r<R(t), t\geq T_{1}.
	\end{equation}
	Consider the function
	$$w=\sigma-v+z,$$
	where 
	$$z=e^{-\lambda(t-T_{1})}.$$
	
	It satisfies
	$$w_{t}-\Delta w+\lambda w\geq0, \hspace{0.5em}\text{if}\hspace{0.5em}r<R(t), t\geq T_{1},$$
	and it is positive 	on $r=R(t), t>T_{1}$ and on $\{t=T_{1}, r<R(T_{1})\}.$ By the maximum principle,
	$w>0 \hspace{0.5em}\text{if}\hspace{0.5em} t>t_{0},$ i.e.,
	$$\sigma(r,t)\geq v(r,t)-z(t).$$
	This inequality can be used to estimate $R'$ from below:
	$$\begin{aligned}
	R^2(t){R'}(t)&=\int_0^{R(t)}\mu(E)(\sigma(r,t)-\bar{\sigma})\rho^{2}d\rho\\
	&\geq\int_{0}^{R(t)}\mu(E)(v-\bar{\sigma})\rho^2d\rho-\int_{0}^{R(t)}\mu(E)z(t)\rho^2d\rho\\
	&\geq \frac{\eta_1}{6}(1-\bar{\sigma})R^{3}(t)-\frac{\eta_2}{3}e^{-\lambda(t-T_{1})}R^{3}(t)\\
	&>0.
	\end{aligned}$$	
	if $t$ is sufficiently large by \eqref{muEbd}. It follows that for some large time $T_{2}\hspace{0.5em} (T_2>T_1)$
	$${R'}(t)>0\hspace{0.5ex}\text{if}\hspace{0.5ex} t>T_2,$$
	i.e. $R(t)$ is monotone increasing. This contradicts to \eqref{RR}. Thus we finish the proof of step 1.
	
	\textbf{Step 2.} We claim that $\liminf_{t\longrightarrow \infty} R(t)\geq \delta_2,$ where $\delta_{1}=\theta\delta_{0}$ and $\delta_{2}=\theta^{2}\delta_{0}.$ To prove the above result, suppose for contradiction that $\liminf_{t\longrightarrow\infty}R(t)<\delta_{2}.$
	
	Then by the step 1, there exists a $t=t_{0}$ such that $R(t_{0})=\delta_{0}$. We shall prove that 
	\begin{equation}\label{Rall}R(t)\geq \delta_{2} \hspace{0.5ex} \text{for all}\hspace{0.5em}{t>t_{0}},\end{equation}
	and this establishes the theorem. Suppose that \eqref{Rall} is false, then there exists $t_{0}<t_{1}<t_{2}$ such that $R(t_{1})=\delta_{1}, R(t_{2})=\delta_{2}$ and 
	\begin{equation}\label{RRR}\begin{aligned}\delta_{2}<R(t)<\delta_{1} &\hspace{0.5em}\text{for}\hspace{0.5em} t_1<t<t_2,\\
	R'(t_{2})&\leq 0.\end{aligned}\end{equation}
	By \eqref{LRLR23R} and $R'/R\geq -\eta$, we have that
	$$\begin{aligned}
	t_{1}-t_{0}\geq\frac{1}{\eta}\log\frac{\delta_{0}}{\delta_{1}}=\frac{1}{\eta}\log\frac{1}{\theta}=\gamma,\\
	t_{2}-t_{1}\geq\frac{1}{\eta}\log\frac{\delta_{1}}{\delta_{2}}=\frac{1}{\eta}\log\frac{1}{\theta}=\gamma.\\
	\end{aligned}$$
	The domain 
	$$D_{1}=\{(r,t): r<\rho_{1}=\delta_{1}e^{\eta\gamma}, t_{1}-\gamma<t<t_{1}\}$$
	contains the sub-domain $D_{0}=\{(r,t): r<R(t), t_{1}-\gamma<t<t_{1}\}$. We introduce the solution $W$ to 
	\begin{equation}\begin{cases}
	W_{t}=\Delta W-\lambda W, \hspace{0.5em}\text{in}\hspace{0.5em} D_{1},\\
	W=1, r=\rho_{1}, t\in(t_{1}-\gamma, t_{1}),\\
	W=0, r<\rho_{1}, t=t_{1}-\gamma.
	\end{cases}\end{equation}
	 By comparison with Green's function for a rectangular domain constructed by a series of reflections, we obtain
	$$W(r, t_{1})\geq \epsilon_{0}>0, r<\rho_{1},$$
	then by maximum principle, we have $\sigma\geq w$ in $D_1$, thus
	$$\sigma(r,t_{1})\geq\epsilon_{0},\hspace{0.5em}r<\rho_{1}.$$
	Next we introduce the domain 
	\begin{equation}\label{DD}D_{2}=\{(r,t): r<R(t), t_{1}<t<t_{2}\} \end{equation}
	and a comparison function in $D_{2}:$
	$$v(r,t)=\hat{\sigma}(t)\frac{R(t)}{r}\frac{\sinh(Mr)}{\sinh(MR(t))},$$
	where
	\begin{equation}\label{hats}\hat{\sigma}(t)=\epsilon_{0}e^{N(t-t_1)}, t_{1}<t<t_{2},\end{equation}
	$$N=\frac{1}{t_2-t_1}\log\frac{1}{\epsilon_0},$$
	so that $\hat{\sigma}(t_{1})=\epsilon_{0}, \hat{\sigma}(t_2)=1.$ As in Step 1 we compute 
	$$\begin{aligned}
	v_{t}-\Delta v+\lambda v&=\frac{\hat{\sigma}'(t)}{\hat{\sigma}(t)}v+\hat{\sigma}(t)(\partial_{t}-\Delta+\lambda)\frac{v}{\hat{\sigma}(t)}\\
	&=Nv+\hat{\sigma}\frac{\partial}{\partial t}\left(\frac{v}{\hat{\sigma}(t)}\right)-2v-Nv\\
	&=\hat{\sigma}\frac{\partial}{\partial t}(\frac{v}{\hat{\sigma}(t)})-2v\\
	&\leq \hat{\sigma}(t)\left[\frac{\eta M^{2}}{3}R^{2}-2(1+o(R^{2}))\right]\\
	&<0.
	\end{aligned}$$
	Thus $v$ is a subsolution.  In view of \eqref{DD} and \eqref{hats}, we see that $\sigma\geq v$ on both $r=R(t)$ and $t=t_{1}$. Here the maximum principle implies $\sigma>v$ in $D_2$ and 
	$$\sigma(r,t_2)\geq \frac{R(t_2)}{r}\frac{\sinh(Mr)}{\sinh(MR(t_2))}$$
	and 
	$$\sigma(r,t_2)\geq \frac{M\delta_2}{\sinh(M\delta_2)}>\frac{1+\bar{\sigma}}{2}.$$
	Using this in \eqref{RE} we deduce that $R'(t_2)>0,$ a contradiction to \eqref{RRR}.
\end{proof}

\section{The existence of radially symmetric stationary solution}

In this section, we derive the existence and uniqueness of the radially symmetric stationary solution. The major challenge for establishing existence and uniqueness stems from the singularity of our integro-differential equation. These types of equations are not covered by the standard theory. Another challenge is to establish continuity of the velocity field near both ends $r=0$ and $r=R_{s}$. In addition, for such a highly nonlinear system, the uniqueness is by no means trivial. As a matter of fact, uniqueness may not be valid for certain system (e.g., stationary problem for the protocell \cite{FriedmanHu-Siam99}). We have explored the special structure of our problem which enables us to overcome the difficulties and established uniqueness. 

In order to establish the existence, another important work in our paper is to construct the well-posed of the general singular equations. Since the proof is lengthy and complex, we put it in the appendix.

We consider the general singular integro-equation
\begin{equation}\label{eq:A1}\begin{cases}
\frac{dx}{dt}=\frac{f(x,t)}{\int_{0}^{t} g(x(s),s)ds},\\
x(0)=x_{0},
\end{cases}\end{equation}
where $f$, $g$ are two functions defined in a domain $G\subset\mathbb{R}\times\mathbb{R}$. We assume that $f$, $g$ satisfy

\begin{enumerate}[label=(F\arabic*), start=1]
	\item\label{item11} $(x_{0},0)\in G$ and $f(x_{0},0)=0$;
	\item\label{item12} $f$ and its derivatives $f_{x}, f_{t}$ are continuous in $G$;
	\item\label{item13} $g\in C(G)$ and $g$ is local Lipschitz continuous with respect to $x$;
	\item\label{item14} $g(x_{0},0)\neq0$ and $\theta=f_{x}(x_{0},0)/g(x_{0},0)\in(-\infty,1)$.
\end{enumerate}

\begin{thm}\label{thm:S03A}
	Assume $f$ and $g$ satisfy conditions \ref{item11} though \ref{item14}. Then \eqref{eq:A1} admits a unique $C^{1}$ solution in $[-\mathcal{T},\mathcal{T}]$ for a small $\mathcal{T}.$
	Moreover,
	$$x'(0)=\frac{f_{t}(x_{0},0)}{g(x_{0},0)-f_{x}(x_{0},0)}.$$
\end{thm}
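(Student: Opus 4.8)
The plan is to recast the singular equation as a regular fixed-point problem near $t=0$ by separating off the linear behaviour encoded in $\theta$. Since $f(x_0,0)=0$, I would write $f(x,t) = a(x-x_0) + bt + R_f(x,t)$ near $(x_0,0)$, where $a = f_x(x_0,0)$, $b = f_t(x_0,0)$ and $R_f$ is $o(|x-x_0|+|t|)$ with continuous first derivatives vanishing to first order at the origin. Likewise $g(x,t) = g_0 + R_g(x,t)$ with $g_0 = g(x_0,0)\neq 0$ and $R_g = o(1)$. Then $\int_0^t g(x(s),s)\,ds = g_0 t + \int_0^t R_g(x(s),s)\,ds = g_0 t\,(1+\rho(t))$ where $\rho(t)\to 0$ as $t\to 0$ (using $|x(s)-x_0|=o(1)$), so the denominator behaves like $g_0 t$. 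Dividing, the equation becomes
\begin{equation}\label{eq:reform}
x'(t) = \frac{\theta}{t}\,(x(t)-x_0) + \frac{b}{g_0} + \Phi(x,t),
\end{equation}
where $\Phi$ collects all the higher-order remainder terms divided by $g_0 t(1+\rho)$; the key point is that each term in $\Phi$ carries an extra factor that is $o(1)$ relative to $1/t$, so $\Phi$ is continuous and small. This is now a \emph{regular} singular ODE of Euler type, and the integrating factor $t^{-\theta}$ (legitimate precisely because $\theta<1$, so that $t^{1-\theta}\to 0$ and the relevant integrals converge at $t=0$) lets me write the solution as
\begin{equation}\label{eq:intform}
x(t) = x_0 + \frac{b/g_0}{1-\theta}\,t + t^{\theta}\int_0^t s^{-\theta}\,\Phi(x(s),s)\,ds,
\end{equation}
at least formally; I would take this integral equation as the definition of the operator whose fixed point I seek.

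The key steps, in order, are: (1) make the decomposition above rigorous, specifying the neighbourhood and deriving the estimate $|\rho(t)|\le \tfrac12$ and a modulus-of-continuity bound on $\Phi$ on a suitable class of functions; (2) set up the Banach space — I would use $x \in C[-\mathcal{T},\mathcal{T}]$ with $x(0)=x_0$ and the weighted norm controlling $|x(t)-x_0|/|t|$, or equivalently the closed ball of functions with $|x(t)-x_0|\le K|t|$ for suitable $K$ — and verify that the operator defined by the right side of \eqref{eq:intform} maps this ball into itself; (3) the crucial computation $\big|t^\theta\int_0^t s^{-\theta}\,\Phi\,ds\big| \le \frac{\|\Phi\|_\infty}{1-\theta}|t|$ uses $\theta<1$ decisively, and shrinking $\mathcal T$ makes $\|\Phi\|_\infty$ as small as needed (it multiplies a higher-order factor); (4) show the same operator is a contraction, again with contraction constant $\le \frac{C\mathcal T^\gamma}{1-\theta}$ for some $\gamma>0$ coming from the modulus of continuity of $R_f,R_g$; (5) conclude a unique fixed point, then bootstrap: the fixed point is continuous, hence $\Phi(x(\cdot),\cdot)$ is continuous, hence by \eqref{eq:intform} $x$ is $C^1$ on $(0,\mathcal T]$, and taking $t\to 0^+$ in \eqref{eq:reform} after noting $(x(t)-x_0)/t \to x'(0)$ gives $x'(0) = \theta x'(0) + b/g_0$, i.e. $x'(0) = \frac{b}{g_0-a} = \frac{f_t(x_0,0)}{g(x_0,0)-f_x(x_0,0)}$, which also matches the explicit linear term in \eqref{eq:intform}; (6) run the identical argument on $[-\mathcal T,0]$ (the sign of $t$ does not affect the structure, since $\int_0^t g\,ds \sim g_0 t$ for $t<0$ as well), and check the two one-sided $C^1$ solutions agree at $0$ with common derivative $b/(g_0-a)$, giving a genuine $C^1$ solution on $[-\mathcal T,\mathcal T]$.

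The main obstacle I anticipate is step (1)–(3): controlling the denominator $\int_0^t g(x(s),s)\,ds$ uniformly from below by $c|t|$ along all candidate solutions, and simultaneously showing the resulting $\Phi$ is genuinely higher order. The subtlety is that $\Phi$ contains the term $-\frac{\theta(x-x_0)}{t}\cdot\frac{\rho(t)}{1+\rho(t)}$ coming from expanding $\frac{1}{g_0 t(1+\rho)}$, so $\Phi$ is \emph{not} bounded a priori — it is only bounded once we restrict to functions satisfying $|x(t)-x_0|\le K|t|$, and then its bound is $K$ times something small. This means the self-mapping and contraction estimates must be run together on the weighted ball, choosing $K$ first (essentially $K \approx \frac{|b/g_0|}{1-\theta}+1$) and then $\mathcal T$ small depending on $K$; getting this order of quantifiers right, and verifying that the remainder integrals converge at $t=0$ (which is exactly where $\theta<1$ enters — if $\theta\ge 1$ the integrating factor method fails and uniqueness is lost), is the technical heart of the argument.
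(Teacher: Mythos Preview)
Your approach is correct and genuinely different from the paper's. The paper proves existence and uniqueness by separate mechanisms: for existence it regularizes the denominator (replacing $\int_0^t g$ by $\int_0^\epsilon g(0,\cdot) + \int_\epsilon^t g(x_\epsilon,\cdot)$), obtains uniform barriers $|x_\epsilon(t)|\le \kappa t$ by comparison, and passes to the limit via Arzel\`a--Ascoli; for uniqueness it splits into two overlapping cases, handling $\theta\in(-1,1)$ by an iterative inequality $|z'|\le a_k$ with $a_{k+1}=\tilde\theta a_k + \tilde M Z(t)$, and $\theta<0$ by the integrating factor $t^{-\theta}$ applied to the difference. The formula for $x'(0)$ is deferred to a separate lemma (a corrected version of a result from \cite{FriedmanHu-08}). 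By contrast, your integrating-factor formulation \eqref{eq:intform} handles all $\theta<1$ at once and delivers existence, uniqueness, and the value of $x'(0)$ from a single contraction argument; this is cleaner and avoids the case split. The paper's route has the minor advantage that the barrier step is completely elementary and decoupled from the integrating factor, whereas your contraction must cope with the nonlocal term $\rho(t)$ inside $\Phi$, which (as you correctly flag) forces you to work on the weighted ball $|x(t)-x_0|\le K|t|$ from the outset. One point you should make explicit when writing it up: uniqueness in $C^1$ (rather than merely in your chosen ball) follows because any $C^1$ solution automatically satisfies $|x(t)-x_0|\le L|t|$ for some $L$, so given two $C^1$ solutions you can enlarge $K$ to $\max(L_1,L_2,K)$ and shrink $\mathcal{T}$ accordingly before running the contraction; this is exactly where the distinction with merely continuous solutions (which for $\theta\in(0,1)$ can be non-unique, as the paper notes) enters.
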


\begin{rmk}
	The above theorem guarantees uniqueness in the class $C^{1}[0,\mathcal{T}]$. In the case $\theta<0$, the continuous solution $x\in C^{0}[0,\mathcal{T}]\cap C^{1}(0,\mathcal{T}]$ of \eqref{eq:A1} exists and is unique, and it is also in $C^{1}$ class. However, it is crucial to notice that in the case $\theta\in(0,1)$, the continuous solution of \eqref{eq:A1} may not be unique, but the $C^{1}$ solution is unique.
\end{rmk}

\begin{rmk}
	In fact, \autoref{thm:S03A} tells us the solution exists in a small interval. If one denotes by $y=y(t)$ the denominator of \eqref{eq:A1}, then $x=x(t),y=y(t)$ satisfy a  integro-differential equation, and the solution can be expanded to a maximal existence interval.
\end{rmk}

We next consider an equation of the type \eqref{eq:A1} involving a parameter $\mu$:
\begin{equation}\label{eq:A2}\begin{cases}
\frac{dx}{dt}=\frac{f(x,t,\mu)}{\int_{0}^{t} g(x(s),s,\mu)ds},\\
x(0)=\varphi(\mu),
\end{cases}\end{equation}
where $f$ and $g$ are defined in an open set $G\times\mathcal{U}$ of $(\R\times\R)\times\R^{m}$ with $(x_{0},0)\in G$, $\mu_{0}\in\mathcal{U}$.
We assume that $f$ and $g$ satisfy:

\begin{enumerate}[label=(F\arabic*),resume]
	\item\label{item21} $\varphi(\mu)$ is a continuous function in a neighborhood of $\mu_{0}$,
	and $\varphi(\mu_{0})=x_{0}$, $f(0,\varphi(\mu),\mu)\equiv0$.
	\item\label{item22} $f,g\in C(G\times\mathcal{U})$ are local Lipschitz continuous with respect to $x$ in $G\times\mathcal{U}$.
	\item\label{item23} The derivatives $f_{x}, f_{t}$ exist and are continuous in a neighborhood of $(x_{0},0,\mu_{0})$.
	\item\label{item24} $g\neq0$ and $f_{x}/g<1$ at $(x,t,\mu)=(x_{0},0,\mu_{0})$.
\end{enumerate}

\begin{thm}\label{thm:S03B}
	Let $f, g$ be the functions satisfying conditions \ref{item21}-\ref{item24}.
	Assume that the unique solution $x=x(t,\mu_{0})$ of \eqref{eq:A2} exists in a bounded closed interval $[a,b]$ with $a<0<b$.
	Then there exists a $\delta>0$ such that
	
	(1) For every $\mu\in B_{\mu_{0},\delta}$, the solution $x=x(t,\mu)$ of \eqref{eq:A2} exists in $[a,b]$.
	
	(2) The function $(t,\mu)\mapsto x(t,\mu)$ is continuous in $[a,b]\times B_{\mu_{0},\delta}$.
\end{thm}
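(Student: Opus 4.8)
The plan is to split $[a,b]$ at $t=\pm\mathcal{T}$ for a small $\mathcal{T}>0$: on $[-\mathcal{T},\mathcal{T}]$ the equation is genuinely singular and is treated by a \emph{uniform-in-$\mu$} version of \autoref{thm:S03A}, while on $[\mathcal{T},b]$ and $[a,-\mathcal{T}]$ the denominator is bounded away from $0$ and the problem reduces to a regular ODE system to which classical continuous-dependence theory applies; the pieces are then glued. \textbf{Step 1 (uniform local theory).} I would revisit the appendix proof of \autoref{thm:S03A} and record that, for a fixed initial point and parameter, it builds the $C^{1}$ solution on $[-\mathcal{T},\mathcal{T}]$ as the unique fixed point of a contraction $\Phi$ on a complete metric space of functions carrying a weighted norm tailored to the $1/t$ singularity. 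The admissible length $\mathcal{T}$, the radius of the invariant ball, and the contraction constant $k<1$ depend on $f,g$ only through sup-bounds for $f,g,f_{x},f_{t}$ and the Lipschitz constant of $g$ on a fixed compact $K\subset G$ around $(x_{0},0)$, an upper bound for $|g|^{-1}$ near $(x_{0},0)$, and a positive lower bound for $1-\theta(\mu)$, where $\theta(\mu)=f_{x}(\varphi(\mu),0,\mu)/g(\varphi(\mu),0,\mu)$. By \ref{item21}--\ref{item24} and continuity, there are $\delta_{1}>0$ and a compact $K$ so that for all $\mu\in B_{\mu_{0},\delta_{1}}$ these quantities are controlled uniformly in $\mu$ (in particular $1-\theta(\mu)\geq c>0$); hence $\mathcal{T}$ and $k$ may be chosen independent of $\mu\in B_{\mu_{0},\delta_{1}}$, and $\mu\mapsto\Phi_{\mu}(w)$ is continuous in the weighted norm for each fixed $w$ because $\Phi_{\mu}$ is assembled from $f(\cdot,\cdot,\mu)$, $g(\cdot,\cdot,\mu)$, $\varphi(\mu)$. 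The uniform contraction principle then gives that the fixed point $\mu\mapsto x(\cdot,\mu)$ is continuous into that function space, so $(t,\mu)\mapsto x(t,\mu)$ is continuous on $[-\mathcal{T},\mathcal{T}]\times B_{\mu_{0},\delta_{1}}$ with $x(0,\mu)=\varphi(\mu)$. Shrinking $\mathcal{T}$ once more, $g(x(s,\mu),s,\mu)$ stays so close to $g(x_{0},0,\mu_{0})\neq0$ for $|s|\leq\mathcal{T}$ that $y(t,\mu):=\int_{0}^{t}g(x(s,\mu),s,\mu)\,ds$ is nonzero for $0<|t|\leq\mathcal{T}$ with $|y(\pm\mathcal{T},\mu)|\geq c_{1}>0$.

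\textbf{Step 2 (regular continuation).} On $t\in[\mathcal{T},b]$ I would pass to the pair system
\[
x'=\frac{f(x,t,\mu)}{y},\qquad y'=g(x,t,\mu),
\]
with data $x(\mathcal{T},\mu),y(\mathcal{T},\mu)$ from Step 1; since $y(t,\mu)\equiv\int_{0}^{t}g(x(s,\mu),s,\mu)\,ds$, a solution of this system is a solution of \eqref{eq:A2}. Because $x(\cdot,\mu_{0})$ solves \eqref{eq:A2} on $[a,b]$, its denominator $y(\cdot,\mu_{0})$ is continuous and nonzero on the compact set $[\mathcal{T},b]$, hence $|y(\cdot,\mu_{0})|\geq c_{0}>0$ there; the right-hand side of the system is continuous and locally Lipschitz in $(x,y)$ on a tube around the curve $t\mapsto(x(t,\mu_{0}),y(t,\mu_{0}))$ lying in the region $\{(x,t)\in G,\ |y|\geq c_{0}/2\}$, and it depends continuously on $\mu$. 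Classical continuous dependence for ODEs — Gronwall on the integral form, using that the initial data at $t=\mathcal{T}$ and the parameter are close to those for $\mu_{0}$ by Step 1 — yields $\delta_{2}\in(0,\delta_{1}]$ such that for $\mu\in B_{\mu_{0},\delta_{2}}$ the solution exists on all of $[\mathcal{T},b]$, stays in the tube, and $(t,\mu)\mapsto(x(t,\mu),y(t,\mu))$ is continuous on $[\mathcal{T},b]\times B_{\mu_{0},\delta_{2}}$. The identical argument on $[a,-\mathcal{T}]$ (where $\int_{0}^{t}g\,ds$ still keeps a definite sign near $0$) gives an analogous $\delta_{2}'$.

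\textbf{Step 3 (gluing and uniqueness).} Set $\delta=\min\{\delta_{1},\delta_{2},\delta_{2}'\}$. For $\mu\in B_{\mu_{0},\delta}$, concatenating the pieces of Steps 1--2 produces a solution of \eqref{eq:A2} on $[a,b]$, proving (1); the three pieces agree at $t=\pm\mathcal{T}$ (the matching values themselves depending continuously on $\mu$) and each is continuous in $(t,\mu)$, so the concatenation is continuous on $[a,b]\times B_{\mu_{0},\delta}$, which is (2). For uniqueness of this solution: on $[-\mathcal{T},\mathcal{T}]$ it is the only $C^{1}$ solution by \autoref{thm:S03A} applied with initial value $\varphi(\mu)$, and on $[\mathcal{T},b]$ and $[a,-\mathcal{T}]$ it is the only solution by the Lipschitz bound on the regular system (the denominator being bounded away from $0$), so the descriptions agree.

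\textbf{The main obstacle} is Step 1: one must go through the appendix proof of \autoref{thm:S03A} and verify that every estimate there is \emph{quantitative and stable}, i.e.\ all constants depend on $f,g$ only through quantities that remain bounded — and bounded away from the degenerate values — as $\mu$ varies near $\mu_{0}$; the decisive one is the uniform lower bound $1-\theta(\mu)\geq c>0$, which is precisely what prevents the weighted-norm contraction from deteriorating. Once Step 1 is made uniform, Steps 2--3 are a routine combination of the uniform contraction principle with classical ODE continuous dependence.
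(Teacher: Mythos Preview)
Your decomposition---treat the singular zone $[-\mathcal{T},\mathcal{T}]$ uniformly in $\mu$, then pass to the regular system $x'=f/y,\ y'=g$ on $[\mathcal{T},b]$ and $[a,-\mathcal{T}]$ using classical continuous dependence, and glue---is exactly the paper's. Where you diverge is in the mechanism for Step~1. You write that the appendix proof of \autoref{thm:S03A} ``builds the $C^{1}$ solution \dots\ as the unique fixed point of a contraction $\Phi$ on a complete metric space \dots\ carrying a weighted norm,'' and plan to invoke the uniform contraction principle. That is not what Appendix~1 does: existence there is obtained by approximating with the nonsingular problems \eqref{eq:A04a}, deriving the uniform barriers \eqref{eq:A07a}--\eqref{eq:A07b} on $x_{\epsilon}$ and $x_{\epsilon}'$, and passing to the limit by compactness; uniqueness is a separate argument (Steps~2--3 of Appendix~1) that itself splits into cases on the sign of $\theta$, and nowhere is a contraction map set up. So ``revisiting the appendix'' will not hand you a $\Phi$; you would have to build one from scratch, and the case $\theta\in(0,1)$ makes a naive weighted-norm contraction delicate.

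The paper's Step~1 instead reuses exactly what Appendix~1 already supplies. The barriers $|x(t,\mu)-\varphi(\mu)|<\kappa t$ and $|\partial_{t}x(t,\mu)|\leq (M\kappa+M)/(1-\eta)$ from the existence argument depend on $f,g$ only through quantities that are stable for $\mu$ near $\mu_{0}$ (your uniform lower bound $1-\theta(\mu)\geq c>0$ is precisely what is needed here). Hence $\{x(\cdot,\mu):\mu\in B_{\mu_{0},\delta_{2}}\}$ is bounded in $C^{1}[0,\mathcal{T}]$, so precompact in $C[0,\mathcal{T}]$ by Arzel\`a--Ascoli. For any $\mu_{k}\to\bar\mu$, every subsequential $C^{0}$ limit is Lipschitz and satisfies the integral form \eqref{eq:A22} with $\mu=\bar\mu$; by the uniqueness in \autoref{thm:S03A} that limit must be $x(\cdot,\bar\mu)$, so the full sequence converges. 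This compactness-plus-uniqueness route is shorter than constructing a uniform contraction and requires no new estimates beyond those in Appendix~1. Your Steps~2--3 coincide with the paper's Step~2.
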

 Next we consider steady state equations in the radially symmetric case are:

\begin{subequations}\label{sss02}\begin{align}
	\label{LLL02Sa} &\frac{1}{r^{2}}\frac{\partial}{\partial r} \left(r^{2}\frac{\partial\sigma}{\partial r}\right)-\lambda\sigma=0,\hspace{1em} 0<r<R,\\
	\label{sss02ma} &D_m\frac{1}{r^{2}}\frac{\partial}{\partial r}\left(r^{2}\frac{\partial m}{\partial r}\right)+\alpha-\beta m=0,\hspace{1em} 0<r<R,\\
	\label{sss02Ea} &u E'=Q(\sigma, m, E):=-\gamma m E + \phi(E)-E\mu(E)(\sigma-\bar{\sigma}),\hspace{1em} 0<r<R,\\
	\label{sss02ua} & u'+\frac{2}{r} u=\mu(E)(\sigma-\bar{\sigma}),\hspace{1em} 0<r<R,\\
	\label{sss02Sb} &\sigma'(0)=0,\hspace{0.5em}\sigma(R)=1,\\
	\label{sss02mb} &m'(0)=0,\hspace{0.5em}m'(R)=0,\\
	\label{sss02ub}&u(0)=0, u(R)=0.
	\end{align}\end{subequations}
It is clear from \eqref{sss02ma} and \eqref{sss02mb} that $m=\frac{\alpha}{\beta}.$
\subsection{The stationary solution for $\sigma$}
\par\vspace*{0.5ex}\par\noindent

We impose the following structural conditions: for some $N>0$,
\begin{equation}\label{sss03Q}\begin{aligned}
\frac{\partial Q(\sigma, m, E)}{\partial E}+\mu(E)\bar{\sigma}<0, &\hspace{1em} 0<\sigma\leq 1,\hspace{0.5em} m=\frac{\alpha}{\beta},\hspace{0.5em}0<E<\infty,\\
Q(\sigma,m,N)<0, &\hspace{1em} 0<\sigma\leq 1,\hspace{0.5em} m=\frac{\alpha}{\beta}.
\end{aligned}\end{equation}
Condition \eqref{sss03Q} also implies that $Q(\sigma, m, E)$ is decreasing in $E>0$, that is,
$$\frac{\partial Q(\sigma, m, E)}{\partial E}<0.$$
\begin{lem}\label{lem:201}
  Assume that there is a solution $(\sigma,m,E,u,R)$ to \eqref{sss02}. Then
  \begin{equation}\label{sss04Sm}
  \sigma(r)=\frac{R}{\sinh(\sqrt{\lambda}R)}\frac{\sinh(\sqrt{\lambda}r)}{r},\hspace{2ex} m=\frac{\alpha}{\beta}.
  \end{equation}
  $u$ satisfies $u(0)=0$, $u(R)=0$, $u(r)<0$ for $r\in(0,R)$ and 
  \begin{equation}\label{sss04u}
  u(r)=\frac{1}{r^{2}}\int_{0}^{r}\mu(E(\rho))(\sigma(\rho)-\bar{\sigma})\rho^{2}d\rho.
  \end{equation}
  $E\in C^{1}[0,R]$ satisfies the following singular equation
  \begin{equation}\label{sss04E}
  E'(r)=\frac{Q(\sigma(r),m, E(r))}{\frac{1}{r^{2}}\int_{0}^{r}\mu(E(\rho)) (\sigma(\rho)-\bar{\sigma})\rho^{2}d\rho},
  \end{equation}
  and $E(0)$, $E(R)$ satisfy
  \begin{equation}\label{sss04Eb}Q(\sigma(r),m,E(r))=0\mbox{ at }r=0,R.\end{equation}
  Finally, $R>0$ satisfies
  \begin{equation}\label{sss04R}
  \sigma(0)=\frac{\sqrt{\lambda}R}{\sinh(\sqrt{\lambda}R)}<\bar{\sigma}.
  \end{equation}
\end{lem}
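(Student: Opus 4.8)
The plan is to extract each component of the claimed solution directly from the stationary system \eqref{sss02}, in the order the equations decouple. First I would solve \eqref{LLL02Sa}–\eqref{sss02Sb} for $\sigma$: the equation $(r^2\sigma')'/r^2=\lambda\sigma$ is a modified spherical Bessel equation whose only solution bounded at $r=0$ with $\sigma'(0)=0$ is a constant multiple of $\sinh(\sqrt\lambda r)/r$, and the normalization $\sigma(R)=1$ fixes the constant, giving \eqref{sss04Sm}. The same argument applied to \eqref{sss02ma}–\eqref{sss02mb}, together with the boundedness and both Neumann conditions, forces $m\equiv\alpha/\beta$ (this is already noted in the text right after \eqref{sss02}). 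For the observation \eqref{sss04R}, I would note that $\sigma(0)=\lim_{r\to0}\sigma(r)=\sqrt\lambda R/\sinh(\sqrt\lambda R)$, which is $<1$; to get the strict inequality with $\bar\sigma$ in place of $1$ I would have to invoke that a stationary tumor can exist only in the shrinking-versus-growing balance regime, i.e. it is part of the hypothesis structure that $\bar\sigma$ exceeds the achievable nutrient level at the center — so I would present \eqref{sss04R} as a necessary condition derived from the requirement that $u$ be able to vanish at both endpoints (see below), rather than something automatic.

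Next I would handle $u$. Integrating \eqref{sss02ua} in the integrating-factor form $(r^2u)'=r^2\mu(E)(\sigma-\bar\sigma)$ from $0$ using $u(0)=0$ yields \eqref{sss04u} immediately, and the boundary requirement $u(R)=0$ becomes the compatibility condition $\int_0^R\mu(E(\rho))(\sigma(\rho)-\bar\sigma)\rho^2\,d\rho=0$. Because $\sigma$ is strictly increasing in $r$ (as $\sinh(\sqrt\lambda r)/r$ is increasing) and $\mu(E)>0$, the integrand changes sign at most once as a function of $r$, passing from negative to positive; hence the partial integrals $\int_0^r$ are first decreasing then increasing, so they are strictly negative on $(0,R)$ if they vanish at $R$. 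This gives $u(r)<0$ on $(0,R)$. For this sign pattern to be consistent we need $\sigma(0)<\bar\sigma$ (otherwise the integrand is nonnegative throughout and $u(R)=0$ forces $\sigma\equiv\bar\sigma$, impossible), which is exactly \eqref{sss04R}; this is the place I would make that deduction rigorous.

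Finally, for $E$: from \eqref{sss02Ea}, on the interval $(0,R)$ where $u(r)\ne0$ we may divide to obtain \eqref{sss04E}, and the stated regularity $E\in C^1[0,R]$ is where the singular-ODE theory enters — near $r=0$ and $r=R$ the denominator $u(r)$ vanishes, so \eqref{sss04E} is exactly an equation of the form treated in \autoref{thm:S03A} (with $t\leftrightarrow r$ shifted to the relevant endpoint, $f$ built from $Q$ and the geometric weight, $g$ built from $\mu(E)(\sigma-\bar\sigma)$). Requiring $E'$ to stay bounded at $r=0,R$ forces the numerator to vanish there, i.e. $Q(\sigma,m,E)=0$ at $r=0,R$, which is \eqref{sss04Eb}; conversely the structural hypothesis \eqref{sss03Q} (sign of $Q_E$ plus $Q(\sigma,m,N)<0$) is what makes the relevant $\theta=f_x/g<1$ condition of \autoref{thm:S03A} hold at each endpoint, so that a $C^1$ solution genuinely exists and the division is legitimate.

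The main obstacle is the endpoint analysis for $E$: verifying that the quotient in \eqref{sss04E} really does define a $C^1$ function up to $r=0$ and $r=R$, which requires computing the leading-order behavior of $u(r)$ near each endpoint (near $0$, $u(r)\sim \tfrac{r}{3}\mu(E(0))(\sigma(0)-\bar\sigma)<0$; near $R$, $u(r)\sim -u'(R)(R-r)$ with $u'(R)$ obtained from differentiating \eqref{sss02ua} and using $u(R)=0$) and then matching these against the behavior of $Q(\sigma,m,E)$ to cast \eqref{sss04E} into the hypotheses \ref{item11}–\ref{item14}. The sign control on $u$ (hence nonvanishing on the open interval) and the derivation of \eqref{sss04Eb} as a forced boundary condition are the delicate points; once \autoref{thm:S03A} is invoked the rest is bookkeeping.
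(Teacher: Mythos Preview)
Your argument for $\sigma$, $m$, the formula for $u$, the sign of $u$ on $(0,R)$, and the derivation of \eqref{sss04R} from the requirement $u(0)=u(R)=0$ matches the paper's proof exactly.

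Where you diverge is in the treatment of the endpoint behavior of $E$. You propose to invoke \autoref{thm:S03A} to establish that $E\in C^{1}[0,R]$ and that $Q=0$ at the endpoints. But this lemma is a \emph{necessary-conditions} lemma: its hypothesis is that a solution $(\sigma,m,E,u,R)$ already exists, and implicit in the paper's notion of solution is that $E\in C^{1}[0,R]$. Given that, the conclusion \eqref{sss04Eb} is a one-line observation: from $u E' = Q$ and $u(0)=u(R)=0$ with $E'$ bounded, one reads off $Q(\sigma(r),m,E(r))=0$ at $r=0,R$ directly. No appeal to the singular integro-differential theory is needed here. The paper reserves \autoref{thm:S03A} and its consequences for the \emph{construction} of the stationary solution (\autoref{lem:204} and \autoref{lem:207}), not for extracting properties of an assumed one.

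So your plan is not wrong, but it conflates two distinct stages of the paper: this lemma reformulates the stationary system, while the singular-ODE machinery you describe is deployed later to solve the reformulated problem. Dropping the ``conversely'' paragraph and simply noting that $uE'=Q$ forces $Q=0$ where $u=0$ (since $E'$ is bounded by hypothesis) brings your proof in line with the paper's, and makes it considerably shorter.
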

\begin{proof}
We immediately obtain $\sigma$, given explicity by \eqref{sss04Sm}. Note that from \eqref{sss02Ea} and \eqref{sss02ub}, we get the $C^{1}$ solution $E$ satisfies
$$Q(\sigma(r),m,E(r))=0\mbox{ at }r=0,R.$$
From \eqref{sss02ua} with \eqref{sss02ub} and the fact that $\bar{\sigma}\in(0,1)$, we find that the solution $u$ is represented as
$$u(r)=\frac{1}{r^{2}}\int_{0}^{r}\mu(E(\rho))(\sigma(\rho)-\bar{\sigma})\rho^{2}d\rho
\mbox{ or }
u(r)=\frac{1}{r^{2}}\int_{R}^{r}\mu(E(\rho))(\sigma(\rho)-\bar{\sigma})\rho^{2}d\rho.$$
\eqref{sss04E} then is a re-statement of \eqref{sss02Ea}.

Note that $u(0)=u(R)=0$, $\mu(E)$ is positive, and $\sigma(r)-\tilde{\sigma}$ is strictly increasing, we derive that $\sigma-\tilde{\sigma}$ admits exactly one interior root $r_{0}\in(0,R)$ and $R$ satisfies \eqref{sss04R}. Therefore, $u$ is negative in $(0,R)$.
\end{proof}

Since $\sigma(r)$ also depends on $R$, we write it as $\sigma(r,R)$.
From the assumption \eqref{sss03Q} and the fact $Q(\sigma, m, 0)=\phi(0)>0$, there exist a unique $h=h(r,R)>0$ such that
\begin{equation}\label{sss03Qa}Q(\sigma(r,R), m, h)=0. \end{equation}
Moreover, the implicit function theorem implies that $h(r,R)$ is a smooth function in two variable $(r,R)$ and that
\begin{equation}\label{sss03Qc}
\frac{\partial h}{\partial r}
=-\frac{Q_{\sigma}(\sigma(r,R),m,h(r,R))}{Q_{E}(\sigma(r,R),m,h(r,R))}
\frac{\partial\sigma(r,R)}{\partial r}.
\end{equation}
From \eqref{sss03Q} and $Q_{\sigma}=-\mu(E)E<0$, we conclude that $h$ is a decreasing function in $r$,
$$h_{r}(r,R)<0, r>0.$$
We also have $h$ is uniformly bounded by positive constants from below and above,
\begin{equation}\label{sss03Qd} N_{1}<h(r,R)<N, r\in[0,R],\end{equation}
where $N_{1}=h(R,R)>0$ is a constant independent of $R$ (since $\sigma(R,R)\equiv1$). Therefore,
\begin{equation}\label{sss03Qb}Q(\sigma(r,R), m, \xi)=\begin{cases}
<0 &\mbox{ for }\xi>h(r,R),\\
=0 &\mbox{ for }\xi=h(r,R),\\
>0 &\mbox{ for }\xi<h(r,R).
\end{cases}\end{equation}

In order to find a solution $(\sigma,m,E,u,R)$ of \eqref{sss02}, it is equivalent to obtain a solution $(E, u, R)$ of the following equations
\begin{subequations}\label{LLL12}\begin{align}
\label{LLL12E}&uE'=Q(\sigma(r,R),m,E),\\
\label{LLL12U}&u'+\frac{2}{r}u=\mu(E)(\sigma(r,R)-\bar{\sigma}),\\
\label{LLL12U1}&u(R)=0, E(R)=h(R,R),\\
\label{LLL12U0}&u(0)=0, E(0)=h(0,R),
\end{align}\end{subequations}
where
\begin{equation}\label{LLL13psi}
\sigma(r,R)=\frac{R}{\sinh(\sqrt{\lambda}R)}\frac{\sinh(\sqrt{\lambda}r)}{r}.
\end{equation}

Note that problem \eqref{LLL12} is an ordinary differential system, and there are singularities at $r=0, R$.
Since problem \eqref{LLL12} is a boundary value problem, we will solve \eqref{LLL12} by the shooting method.
For any fixed $R>0$, we solve the initial problem $(E,u)$ of \eqref{LLL12E}-\eqref{LLL12U1} near $r=R$,
then find a suitable $R$ such that $(E,u)$ exists in $[0,R]$ and satisfies condition \eqref{LLL12U0} at the other end.

\subsection{The existence for  stationary solution $E$}
\par\vspace{2ex}\par\noindent

\textbf{Solutions near $r=R$, i.e., $0<R-r\ll1.$ } If we integrate \eqref{LLL12U} with $u(R)=0$, we get
$$u(r)=\frac{1}{r^{2}}\int_{R}^{r}\mu(E)(\sigma-\bar{\sigma})\rho^{2}d\rho.$$
Therefore, $E$ satisfies the following initial value problem:
\begin{equation}\label{LLL16}\begin{cases}
\frac{dE(r)}{dr}=
\frac{Q(\sigma(r),m,E(r))}
{\frac{1}{r^{2}}\int_{R}^{r}\mu(E(\rho))(\sigma(\rho)-\bar{\sigma})\rho^{2}d\rho},\\
E(R)=h(R).
\end{cases}\end{equation}

\begin{lem}\label{lem:204}
  The equation \eqref{LLL16} admits a unique solution. Moreover,
  \begin{equation}\label{LLL05B}
  \frac{dE}{dr}(R)
  =\frac{E(R)\mu(E(R))\sigma_{r}(R)}
  {\frac{\partial Q}{\partial E}(1,m, E(R))-\mu(E(R))(1-\bar{\sigma})}.
  \end{equation}
\end{lem}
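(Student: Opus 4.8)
The plan is to recognize that \eqref{LLL16} is exactly an instance of the general singular integro-differential equation \eqref{eq:A1} studied in \autoref{thm:S03A}, after a change of variables that moves the singular endpoint $r=R$ to the origin. Concretely, I would set $t = R - r$ (so $t=0$ corresponds to $r=R$ and $t>0$ moves into the interior), let $x(t) = E(R-t)$, and rewrite the denominator: since $\int_R^r \mu(E(\rho))(\sigma(\rho)-\bar\sigma)\rho^2\,d\rho = -\int_0^t \mu(x(s))(\sigma(R-s)-\bar\sigma)(R-s)^2\,ds$, the equation takes the form $\frac{dx}{dt} = \dfrac{f(x,t)}{\int_0^t g(x(s),s)\,ds}$ with $f(x,t) = Q(\sigma(R-t),m,x)\cdot\frac{(R-t)^2}{-1}\cdot$(sign bookkeeping) and $g(x,s) = \mu(x)(\sigma(R-s)-\bar\sigma)(R-s)^2$. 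I would be careful to track the sign: near $r=R$ we have $\sigma-\bar\sigma>0$ (since $\sigma(R)=1>\bar\sigma$) and $r<R$, so the denominator $\frac1{r^2}\int_R^r(\cdots)$ is negative, consistent with $u<0$ from \autoref{lem:201}.

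Next I would verify the four hypotheses \ref{item11}–\ref{item14} for this $f,g$ at the point $(x_0,0) = (h(R),0)$. Hypothesis \ref{item11}: $f(x_0,0) = 0$ because $Q(\sigma(R),m,h(R)) = Q(1,m,h(R)) = 0$ by the definition \eqref{sss03Qa} of $h$. Hypothesis \ref{item12}: $f$ is built from $Q$, $\sigma$ and polynomial factors, all $C^1$, using that $Q$ has continuous $Q_\sigma, Q_E$ and $\sigma$ is smooth away from $r=0$ (and $r=R$ is away from $0$). Hypothesis \ref{item13}: $g = \mu(x)(\sigma-\bar\sigma)(R-s)^2$ is continuous and locally Lipschitz in $x$ since $\mu$ is a smooth monotone function. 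Hypothesis \ref{item14}: $g(x_0,0) = \mu(h(R))(1-\bar\sigma)R^2 \neq 0$ because $\mu>0$ and $\bar\sigma\in(0,1)$, and the quantity $\theta = f_x(x_0,0)/g(x_0,0)$ must lie in $(-\infty,1)$. Computing $f_x(x_0,0)$: up to the overall sign and the $R^2$ factor it is $Q_E(1,m,h(R))$, so $\theta = \dfrac{Q_E(1,m,h(R))}{\mu(h(R))(1-\bar\sigma)}$ (the $R^2$ cancels, the sign works out). Then $\theta < 1$ is equivalent to $Q_E(1,m,h(R)) < \mu(h(R))(1-\bar\sigma)$, i.e. $Q_E(1,m,h(R)) - \mu(h(R))(1-\bar\sigma) < 0$; since $Q_E < 0$ by the first line of \eqref{sss03Q} and $\mu(1-\bar\sigma)>0$, the difference is strictly negative, so $\theta<1$ holds. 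This is also precisely why the denominator in \eqref{LLL05B} is nonzero, which makes the formula well-defined.

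With the hypotheses checked, \autoref{thm:S03A} directly gives the unique $C^1$ solution on a small interval $[-\mathcal T,\mathcal T]$, hence (restricting to $t\ge 0$) a unique $C^1$ solution $E$ of \eqref{LLL16} on a small interval $[R-\mathcal T, R]$; translating back via $E(r) = x(R-r)$ gives the claimed local existence and uniqueness. For the derivative formula, \autoref{thm:S03A} also provides $x'(0) = \dfrac{f_t(x_0,0)}{g(x_0,0) - f_x(x_0,0)}$. I would then compute $f_t(x_0,0)$: differentiating $f(x,t) \sim Q(\sigma(R-t),m,x)(R-t)^2$ (with the sign factor) in $t$ at $t=0$ and using $Q(1,m,x_0)=0$ to kill the term coming from the $(R-t)^2$ factor, one is left with $-Q_\sigma(1,m,h(R))\sigma_r(R)\cdot R^2$ times the sign factor. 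Using $Q_\sigma = -E\mu(E)$ (stated just before \eqref{sss03Qb}), this becomes $E(R)\mu(E(R))\sigma_r(R)\cdot R^2$ times the sign. Since $\frac{dE}{dr}(R) = -x'(0)$, combining with the denominator $g(x_0,0)-f_x(x_0,0) = R^2[\mu(E(R))(1-\bar\sigma) - Q_E(1,m,E(R))]$ times the sign, the $R^2$ and the sign factors cancel, yielding exactly \eqref{LLL05B}.

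The main obstacle is the bookkeeping of signs and constant factors in the change of variables $t=R-r$: because the denominator $\frac1{r^2}\int_R^r(\cdots)$ is negative and the integration limits are reversed, one must be meticulous that the resulting $f$ and $g$ are the correct ones to which \autoref{thm:S03A} applies (in particular that $g(x_0,0)\ne 0$ with the right interpretation, and that the condition $\theta<1$ is the genuine translation of $Q_E - \mu(1-\bar\sigma)<0$ rather than its negation). Once the translation is set up cleanly, the verification of \ref{item11}–\ref{item14} is routine given the structural assumptions \eqref{sss03Q} and the properties of $\mu$ and $h$, and the derivative formula is a direct substitution into the formula furnished by \autoref{thm:S03A}.
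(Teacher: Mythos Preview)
Your proposal is correct and follows essentially the same approach as the paper: perform the change of variables $t=R-r$ to bring the singular endpoint to the origin, verify conditions \ref{item11}--\ref{item14} of \autoref{thm:S03A} (the key computation being that $\theta = Q_E(1,m,h(R))/[\mu(h(R))(1-\bar\sigma)]<0<1$), and read off both existence/uniqueness and the derivative formula from that theorem. The paper's own proof is terser and does not spell out the derivation of \eqref{LLL05B}, but your more detailed sign and factor bookkeeping arrives at exactly the same conclusion by the same route.
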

\begin{proof}
Note that $E(R)=h(R)>0$ and $Q_{E}<0$, we derive
$$\tilde{\theta}\triangleq\frac{\gamma m-\phi_{E}(R)-\frac{\partial(E\mu(E))}{\partial E}(\bar{\sigma}-\sigma)}{\mu({E(R)})(\bar{\sigma}-1)} = -\frac{\frac{\partial Q}{\partial E}|_{r=R}}{\mu(E(R))(\bar{\sigma}-1)}<0.$$
Therefore, the conditions in \autoref{thm:S03A} are satisfied. We can, after a change of variables $r\rightarrow R-r,$ apply \autoref{thm:S03A} to show the existence and uniqueness of the solution $E\in C^{1}[R-\delta, R]$ to \eqref{sss02}, for a small $\delta>0.$
\end{proof}

\autoref{lem:204} tells us the solution $E$ of \eqref{LLL16} exists in the interval $[R-\delta,R]$, it also exists in a right neighborhood of $r=R$.
The equations \eqref{LLL16} or \eqref{LLL12E}-\eqref{LLL12U} do not have a singularity at $r=R-\delta$.
The standard ODE theory implies that the solution pair $(E,u)$ can be extended to a maximal interval $(\tau,R]$, so long as the denominator $u$ is negative and $E$ is positive on $(\tau,R)$. Noting that there is a singularity terms $2u/r$ in equation \eqref{LLL12U}, we see that $\tau\geq0$.
It is clear that
\begin{equation}\label{LLL19} E(r)>0,\hspace{2ex} u(r)<0,\hspace{2ex}r\in(\tau,R).\end{equation}
Since $R$ is also unknown, we usually denote the solution by $E(r,R)$, $u(r,R)$ and denote by $\tau=\tau(R)$ for the maximum existence interval $(\tau(R),R]$.

\begin{rmk}\label{rmk:205}
  The initial solution $(E,u)$ of \eqref{LLL12E}, \eqref{LLL12U} and \eqref{LLL12U1} is also depending continuously on $R$; see \autoref{thm:S03B}. More precisely, if for any fixed $\bar{R}>0$, the solution $E(r,\bar{R})$, $u(r,\bar{R})$ of \eqref{LLL12E}, \eqref{LLL12U} and \eqref{LLL12U1} are well-solved in a compact interval $[\hat{r},\check{r}]$ with $\bar{R}\in(\hat{r},\check{r})$, then there exists $\delta>0$ such that

  (i) $E(r,R)$, $u(r,R)$ of \eqref{LLL12E}, \eqref{LLL12U} and \eqref{LLL12U1} are well-solved in the interval $[\hat{r},\check{r}]$ for every $R\in[\bar{R}-\delta,\bar{R}+\delta]$.

  (ii) $E(r,R)$, $u(r,R)$ are continuous functions in the variable $(r,R)\in [\hat{r},\check{r}]\times[\bar{R}-\delta,\bar{R}+\delta]$.
\end{rmk}
\par\vspace{2ex}\par\noindent 
\textbf{Solution near $0<r\ll1.$} We consider
$$\frac{dE}{dr}=\frac{Q(\sigma(r),m,E)}{\frac{1}{r^{2}}\int_{0}^{r}\mu(E)(\sigma-\bar{\sigma})\rho^{2}d\rho}.$$
The singularity at $r=0$ is not exactly of the format in \autoref{thm:S03A}, but the quantity $\theta$ defined in \ref{item14} proceeding \autoref{thm:S03A} is very close to
$$\begin{aligned}
\tilde{\theta}=
&3\frac{\gamma m-\phi_E(0)-\frac{\partial(E\mu(E))}{\partial E}(\bar{\sigma}-\sigma)}{\mu(E_{0})(\bar{\sigma}-\sigma(0, R))}
= -3\frac{\frac{\partial Q}{\partial E}|_{r=0}}{\mu(E_{0})(\bar{\sigma}-\sigma(0))}>0.
\end{aligned}$$
It is a very heavy assumption to require $\tilde{\theta}<1,$ and in some special cases of interest, this is not possible. That is why we solve the ODE system starting from $r=R.$

Therefore, we use a different approach. Instead of working from $r=0$, we shall solve the ODE starting from $r= R.$

\begin{lem}[The bounds for $E$]\label{lem:206}
There holds
$$E(R)<E(r)<h(r,R),\hspace{1em} r\in(\tau,R).$$
Moreover, $E\in C^{1}(\tau, R]$, $E'(r)<0$ for $r\in(\tau, R).$ Furthermore, if we define $E(\tau)=\lim_{r\rightarrow\tau+0} E(r)$, then $E\in C[\tau, R].$
\end{lem}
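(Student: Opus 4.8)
\textbf{Proof proposal for \autoref{lem:206}.}

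The plan is to establish the three assertions—the two-sided bound $E(R)<E(r)<h(r,R)$, the monotonicity $E'(r)<0$, and the continuity up to the left endpoint $\tau$—in a single bootstrap argument that exploits the sign structure of $Q$ recorded in \eqref{sss03Qb} together with the sign of the denominator $u(r,R)<0$ from \eqref{LLL19}. First I would analyze the behavior at the right endpoint: by \autoref{lem:204} we have $E(R)=h(R,R)$ and $dE/dr(R)$ is given by \eqref{LLL05B}; since $E(R)>0$, $\mu(E(R))>0$, $\sigma_r(R)>0$, and the denominator $\partial_E Q(1,m,E(R))-\mu(E(R))(1-\bar\sigma)<0$ (this is exactly the quantity $\tilde\theta\cdot\mu(E(R))(\bar\sigma-1)$ with $\tilde\theta<0$ appearing in the proof of \autoref{lem:204}), we get $E'(R)<0$. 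Hence just to the left of $R$ we have both $E(r)<h(R,R)=E(R)$... wait, this needs care: I want $E(r)>E(R)$, so let me reconsider. Since $E'(R)<0$, moving left (decreasing $r$) increases $E$, so indeed $E(r)>E(R)$ for $r$ slightly less than $R$, and also $E(r)<h(r,R)$ because $h_r(r,R)<0$ means $h(r,R)>h(R,R)=E(R)$ while $E(r)$ starts at $E(R)$; a short Taylor comparison of $E(r)$ and $h(r,R)$ near $R$ using \eqref{sss03Qc} and \eqref{LLL05B} shows $E(r)<h(r,R)$ there.

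The core of the argument is a continuation/invariant-region claim: I would show that the open region $\{(r,E): E(R)<E<h(r,R)\}$ is positively invariant as $r$ decreases from $R$ toward $\tau$. Suppose for contradiction that the solution first exits this region at some $r_1\in(\tau,R)$. If $E(r_1)=h(r_1,R)$, then $Q(\sigma(r_1),m,E(r_1))=0$ by \eqref{sss03Qa}, so $E'(r_1)=0$; but for $r\in(r_1,R)$ we had $E(r)<h(r,R)$, i.e. $Q(\sigma(r),m,E(r))>0$ by \eqref{sss03Qb}, and since $u<0$ this forces $E'(r)<0$ on $(r_1,R)$, so $E$ is strictly decreasing there, whence $E(r_1)>E(r)$ for $r>r_1$; combined with $E'(r_1)=0$ and a sign analysis of $E''$ (or simply the fact that $h(r,R)$ is strictly decreasing so $h(r_1,R)>h(r,R)>E(r)$ for $r>r_1$) one derives that $E$ cannot touch $h$ from below while decreasing—a contradiction. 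If instead $E(r_1)=E(R)$ with $E(r)>E(R)$ for $r\in(r_1,R)$, then $E'(r_1)\ge0$; but on $(r_1,R)$ we still have $E(r)<h(r,R)$ hence $E'(r)<0$, so $E$ is decreasing on $(r_1,R)$ and $E(r_1)=\lim_{r\to r_1+}E(r)\ge E(R)+\text{(something positive)}>E(R)$, again a contradiction. This establishes $E(R)<E(r)<h(r,R)$ on all of $(\tau,R)$, and as a byproduct $Q(\sigma(r),m,E(r))>0$ there, so $E'(r)=Q/u<0$ on $(\tau,R)$ and $E\in C^1(\tau,R]$.

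Finally, for continuity at $\tau$: since $E$ is monotone decreasing on $(\tau,R)$ and bounded above (by $N$, via \eqref{sss03Qd} since $E(r)<h(r,R)\le N$), the limit $E(\tau):=\lim_{r\to\tau+}E(r)$ exists and is finite, which makes $E\in C[\tau,R]$ by definition. The step I expect to be the main obstacle is the contradiction argument at the upper barrier $E=h(r,R)$: because both $E$ and $h$ are moving (as functions of $r$) and $h$ is only known to be $C^1$, ruling out a tangential touch requires comparing the slopes $E'(r_1)=0$ against $h_r(r_1,R)<0$ carefully—the solution decreasing with zero slope at $r_1$ while the barrier has strictly negative slope there means the barrier drops below $E$ immediately to the left, so in fact the relevant constraint is automatically satisfied; the delicate point is organizing this so that it also handles the possibility $\tau=r_1$, i.e. that the maximal interval terminates exactly at a touching point, which I would rule out by noting that near such a point $u$ stays bounded away from $0$ and $E$ stays bounded, so the ODE \eqref{LLL16} has no singularity and the solution extends, contradicting maximality unless $\tau=0$.
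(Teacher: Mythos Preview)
Your approach is essentially the paper's, and the ingredients are all correct. Two places deserve tightening.

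First, the lower bound $E(r)>E(R)$ does not need its own invariant-region case. Once you have $E(r)<h(r,R)$ on $(\tau,R)$ you get $Q(\sigma(r),m,E(r))>0$ by \eqref{sss03Qb}, hence $E'(r)=Q/u<0$ throughout $(\tau,R)$, and strict monotonicity gives $E(r)>E(R)$ for $r<R$ immediately. The paper does exactly this and never treats the lower barrier separately; your contradiction argument for the case $E(r_1)=E(R)$ is circular as written (you assume $E(r)>E(R)$ on $(r_1,R)$, deduce $E$ is decreasing there, and then conclude $E(r_1)>E(R)$, which is what you wanted but contradicts the hypothesis $E(r_1)=E(R)$ only because the hypothesis is incompatible with the monotonicity you already have---so just use the monotonicity directly).

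Second, your contradiction at the upper barrier $E=h$ is not cleanly closed. You correctly observe $E'(r_1)=0$, but then drift into a vague ``sign analysis of $E''$'' and, in your final paragraph, a discussion of what happens to the \emph{left} of $r_1$. The paper's argument is a one-line slope comparison from the \emph{right}: since $E(r)-h(r)<0$ for $r\in(r_1,R)$ and $E(r_1)-h(r_1)=0$, the derivative of $E-h$ at $r_1$ satisfies $(E-h)'(r_1)\le 0$, i.e.\ $E'(r_1)\le h'(r_1)<0$, contradicting $E'(r_1)=0$. No second derivative and no look to the left is needed. (Incidentally, your claim that ``the barrier drops below $E$ immediately to the left'' has the geometry reversed: since $h'(r_1)<0$ and $E'(r_1)=0$, for $r$ just below $r_1$ one has $h(r)>h(r_1)=E(r_1)\approx E(r)$, so $h$ stays \emph{above} $E$; but this is irrelevant to the contradiction, which lives entirely at $r_1$.) Your worry about ``$\tau=r_1$'' is also unnecessary here: the lemma only asserts the inequalities on the open interval $(\tau,R)$, and the behavior at $\tau$ itself is the subject of the next lemma.
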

\begin{proof}
Since the derivatives of $E$ and $h$ at $r=R$ are given in \eqref{sss03Qc} and \eqref{LLL05B}, we deduce that
$$\frac{d(E-h)}{dr}\Big|_{r=R}=\left[\frac{E\mu(E)\sigma_{r}}{Q_E(1, m, E)-\mu(E)(1-\bar{\sigma})}-\frac{E\mu(E)\sigma_{r}}{Q_E(1, m, E)}\right]\bigg|_{r=R}>0.$$
Since $E(R)=h(R)$, we get $E(r)<h(r)$ for $0<R-r\ll1$.

We claim that $E(r)<h(r)$ for all $r\in(\tau, R)$. If there exists $r_{1}\in(\tau,R)$ such that
$$E(r_{1})=h(r_{1}, R),~~E(r)<h(r,R),r\in(r_{1},R),$$
then $E_{r}(r_{1})\leq h_{r}(r_{1})$. However, $h'<0$ for all $r>0$ and $E'(r_{1})=0$ by \eqref{LLL16}, this is a contradiction. Therefore, $E(r)-h(r)<0$ and also $E'(r)<0$ for $r\in(\tau,R)$, again by \eqref{LLL16}.

From \eqref{sss03Q} or \eqref{sss03Qd}, we see that $E$ is bounded and monotone decreasing in $(\tau,R]$. Thus, the limit $\lim_{r\rightarrow\tau+0} E(r)$ exists and is positive, and is denoted by $E(\tau)$. Therefore, $E\in C[\tau, R]\cap C^{1}(\tau,R].$
\end{proof}

Now we set $I(r)=r^{2}u(r)$ or
$$I(r)\triangleq\int_{R}^{r}\mu(E(\rho))(\sigma(\rho)-\bar{\sigma})\rho^{2}d\rho.$$
As a consequence of \autoref{lem:206}, we find $I(r)$ and its derivatives $I'(r)$ are continuous up to $r=\tau$, say, $I\in C^{1}[\tau,R]$. Thus $\tau=\tau(R)$ satisfies:
$$\tau=\inf\{\tau'\in(0,R):I(r)<0\texttt{ for }r\in(\tau',R)\}.$$

We have the following more precise results about values at $r=\tau$.
\begin{lem}\label{lem:207}
We have that

(i) If $\tau\in(0,R)$, then $I(\tau)=0$, $u(\tau)=0$.
Moreover, $E(\tau)=h(\tau)>0$, $E\in C^{1}[\tau,R], E'(\tau)<0,$ and
\begin{equation}
E'(\tau)=\frac{E(\tau)\mu(E(\tau))\sigma_{r}(\tau)}{Q_E(\sigma(\tau), m, E(\tau))-\mu(E(\tau))(\sigma(\tau)-\bar{\sigma})}.
\end{equation}

(ii) If $\tau=0$ and $I(0)=0$, then $u(0)=0$, $E(0)=h(0)$, $E\in C^{1}[0,R]$ and $E'(0)=0$.
\end{lem}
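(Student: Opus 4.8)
\textbf{Proof proposal for \autoref{lem:207}.}

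The plan is to analyze the two cases by carefully tracking what the continuity of $I\in C^1[\tau,R]$ (established just above via \autoref{lem:206}) forces at the left endpoint $r=\tau$, together with the defining property that $\tau$ is the infimum of times $\tau'$ for which $I<0$ on $(\tau',R)$. For part (i), suppose $\tau\in(0,R)$. By continuity $I(\tau)=\lim_{r\to\tau+0}I(r)\le 0$. If $I(\tau)<0$, then by continuity $I<0$ on a slightly larger interval $(\tau-\varepsilon,R)$, and since the singular factor $2u/r$ in \eqref{LLL12U} is harmless away from $r=0$ (here $\tau>0$), the standard ODE theory lets us extend the solution pair $(E,u)$ past $r=\tau$ while keeping $I<0$ and $E>0$, contradicting the maximality of $(\tau,R]$. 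Hence $I(\tau)=0$, and therefore $u(\tau)=I(\tau)/\tau^2=0$. Next, $E$ is monotone decreasing and bounded on $(\tau,R]$ by \autoref{lem:206}, so $E(\tau):=\lim_{r\to\tau+0}E(r)$ exists and is positive. To see $E(\tau)=h(\tau)$: since $I(r)\to 0$ as $r\to\tau+0$ while $Q(\sigma(\tau),m,E(\tau))$ would have a definite sign if $E(\tau)\ne h(\tau)$ (by \eqref{sss03Qb}), equation \eqref{LLL16} would force $|E'(r)|\to\infty$; but the explicit form of the quotient, combined with $I(r)=\int_R^r\mu(E)(\sigma-\bar\sigma)\rho^2\,d\rho$ vanishing linearly, must be reconciled, so $E(\tau)=h(\tau)$. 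Once $E(\tau)=h(\tau)>0$ and $Q_E<0$ at $(\sigma(\tau),m,E(\tau))$, we are exactly in the setting of \autoref{thm:S03A} after the change of variables $r\to\tau+r$ (the relevant ratio $\tilde\theta=-Q_E/(\mu(E(\tau))(\sigma(\tau)-\bar\sigma))$, and since $\sigma(\tau)-\bar\sigma<0$ on this left part, we get $\tilde\theta<0\in(-\infty,1)$); this yields $E\in C^1[\tau,R]$ and the stated formula for $E'(\tau)$, which comes directly from the $x'(0)$ formula in \autoref{thm:S03A} with $f(x,r)=Q(\sigma(\tau+r),m,x)$ and $g(x,r)=\mu(x)(\sigma(\tau+r)-\bar\sigma)$. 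That $E'(\tau)<0$ follows since $\sigma_r(\tau)>0$, $\mu(E(\tau))>0$, $E(\tau)>0$, and the denominator $Q_E-\mu(E(\tau))(\sigma(\tau)-\bar\sigma)$ is strictly negative (both terms: $Q_E<0$ and $-\mu(\sigma(\tau)-\bar\sigma)>0$ — wait, I must recheck sign; in fact $Q_E<0$ and $\mu(\sigma(\tau)-\bar\sigma)<0$ so $-\mu(\sigma(\tau)-\bar\sigma)>0$, hence the denominator's sign needs the structural assumption \eqref{sss03Q}, namely $Q_E+\mu\bar\sigma<0$, which gives $Q_E-\mu(\sigma-\bar\sigma) = (Q_E+\mu\bar\sigma)-\mu\sigma<0$).

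For part (ii), assume $\tau=0$ and $I(0)=0$. Then immediately $u(0)=\lim_{r\to 0+}I(r)/r^2$; we must show this limit is $0$. Since $I(0)=0$ and $I\in C^1[0,R]$, we have $I(r)=I'(0)r+o(r)$, and $I'(0)=\mu(E(0))(\sigma(0)-\bar\sigma)\cdot 0=0$ because of the $\rho^2$ weight — more precisely $I'(r)=\mu(E(r))(\sigma(r)-\bar\sigma)r^2\to 0$ as $r\to 0$, so $I(r)=o(r)$, hence $I(r)=\int_0^r\mu(E)(\sigma-\bar\sigma)\rho^2\,d\rho$, and by L'Hôpital $u(0)=\lim_{r\to0}I(r)/r^2=\lim_{r\to0}\mu(E(r))(\sigma(r)-\bar\sigma)r^2/(2r)=0$. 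Next, $E(0)=\lim_{r\to0+}E(r)$ exists and is positive (monotonicity plus boundedness from \autoref{lem:206}), and $E(0)=h(0)$ by the same argument as in part (i): if $E(0)\ne h(0)$ then $Q(\sigma(0),m,E(0))\ne 0$ by \eqref{sss03Qb}, which is incompatible with $E'$ remaining integrable near $0$ given $u(r)\sim \tfrac13\mu(E(0))(\sigma(0)-\bar\sigma)r\to 0$. Finally, to upgrade to $E\in C^1[0,R]$ with $E'(0)=0$, I expand: near $r=0$, $u(r)=\tfrac13\mu(E(0))(\sigma(0)-\bar\sigma)r + o(r)$ (using $\sigma(r)\to\sigma(0)$ and the $\rho^2$ quadrature), so $E'(r)=Q(\sigma(r),m,E(r))/u(r)$ behaves like $Q_E\cdot(E(r)-h(0))/(\tfrac13\mu(E(0))(\sigma(0)-\bar\sigma)r) + (\text{lower order})$; writing $w=E-h$ this is a singular linear ODE $w'\approx \tilde\theta w/r$ with $\tilde\theta=-3Q_E/(\mu(E(0))(\bar\sigma-\sigma(0)))>0$, whose only solution bounded at $0$ with $w(0)=0$ satisfies $w(r)=O(r^{\tilde\theta})$, and hence $E'(r)=O(r^{\tilde\theta-1})$; but to conclude $E'(0)=0$ I need $\tilde\theta>1$ — this is precisely the delicate point flagged in the text as "a very heavy assumption." The resolution is that we are \emph{not} solving the IVP forward from $r=0$; rather $E$ is already determined on $(0,R]$ by the backward solution from $r=R$, and part (ii) is conditional ("\emph{if} $\tau=0$ and $I(0)=0$"), so we only need to read off the boundary behavior of that particular solution. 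For it, $w(r)=E(r)-h(r)<0$ with $w\to 0$, and feeding this into $E'(r)=Q/u$ with the refined asymptotics $u(r)=\tfrac13\mu(E(0))(\sigma(0)-\bar\sigma)r(1+o(1))$ and $Q(\sigma(r),m,E(r))=Q_E\cdot w(r)(1+o(1))+Q_\sigma\cdot(\sigma(r)-\sigma(0))$ forces, via a Gronwall/barrier comparison on $(0,\delta]$, that $w(r)=o(r)$, whence $E'(r)=Q/u\to 0$; combined with continuity of $Q$ and $u$ on $[0,R]$ this gives $E\in C^1[0,R]$ and $E'(0)=0$.

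The main obstacle is exactly this last step of part (ii): showing $E'(0)=0$ for the backward-constructed solution. The difficulty is that the naive indicial exponent $\tilde\theta$ need not exceed $1$, so a generic solution of the singular ODE near $r=0$ would have $E'$ blowing up like $r^{\tilde\theta-1}$. What saves us is that $h(r,R)$ itself is smooth with $h_r(0,R)=0$ (since $\sigma_r(0)=0$ in \eqref{sss03Qc}), so $h$ is an "approximate solution" that is flat at $0$, and the true solution, being squeezed by $E(R)<E(r)<h(r,R)$ and forced to agree with $h$ at $r=0$, inherits this flatness — but making the comparison rigorous requires a careful barrier argument playing off the sign of $Q_E$ against the exact rate at which $u(r)\to 0$. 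I would organize this as a short sub-lemma establishing $E(r)-h(r)=o(r)$ as $r\to0+$, from which $E'(0)=0$ is immediate, and $C^1$ up to the boundary follows from the already-established continuity of the right-hand side of \eqref{LLL16} on all of $[\tau,R]$.
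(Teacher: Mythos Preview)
Your argument has a genuine gap in both parts, stemming from the same miscalculation of the indicial ratio and the wrong choice of tool at the singular endpoint.

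\textbf{Part (i).} After the shift $t=r-\tau$ the equation takes the form of \eqref{eq:A1} with $f(x,t)=(\tau+t)^2 Q(\sigma(\tau+t),m,x)$ and $g(x,s)=\mu(x)(\sigma(\tau+s)-\bar\sigma)(\tau+s)^2$, so the ratio in \ref{item14} is
\[
\theta=\frac{f_x}{g}\bigg|_{t=0}=\frac{Q_E(\sigma(\tau),m,E(\tau))}{\mu(E(\tau))(\sigma(\tau)-\bar\sigma)},
\]
with \emph{no} minus sign. Since $Q_E<0$ and $\sigma(\tau)-\bar\sigma<0$ (this last inequality needs a one-line argument: $I(\tau)=I(R)=0$ and $\sigma-\bar\sigma$ is strictly increasing force the unique zero of $\sigma-\bar\sigma$ into $(\tau,R)$), we get $\theta>0$; and the structural hypothesis \eqref{sss03Q}, namely $Q_E+\mu\bar\sigma<0$, gives
\[
\theta-1=\frac{Q_E-\mu(\sigma(\tau)-\bar\sigma)}{\mu(\sigma(\tau)-\bar\sigma)}=\frac{(Q_E+\mu\bar\sigma)-\mu\sigma(\tau)}{\mu(\sigma(\tau)-\bar\sigma)}>0.
\]
Thus $\theta>1$, which is \emph{outside} the range $(-\infty,1)$ required by \autoref{thm:S03A}, so that theorem cannot be invoked. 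The paper instead applies \autoref{lem:FH2008} (in its corrected form, \autoref{prop:FH2008}, case $\gamma\notin[0,1]$): that lemma is designed precisely to upgrade an already-existing $C^0\cap C^1(0,a]$ solution of $v\phi'=F$ to $C^1[0,a]$ at the singular endpoint, and its hypothesis $\gamma=F_\phi/v'(0)>1$ is exactly what $\theta>1$ provides. The formula for $E'(\tau)$ then drops out of \eqref{eq:FH0002}.

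\textbf{Part (ii).} You have misread the remark preceding \autoref{lem:206}. The paper is saying that requiring $\tilde\theta<1$ (so as to launch the IVP \emph{from} $r=0$ via \autoref{thm:S03A}) would be too restrictive; it is \emph{not} saying that $\tilde\theta>1$ may fail. In fact the same computation as above, now with $u'(0)=\tfrac13\mu(E(0))(\sigma(0)-\bar\sigma)$, gives $\theta=3Q_E/[\mu(E(0))(\sigma(0)-\bar\sigma)]>1$ by \eqref{sss03Q}. Hence \autoref{prop:FH2008} (case $\gamma>1$) applies again and yields $E\in C^1[0,R]$ with $E'(0)$ given by \eqref{eq:FH0002}; since $\sigma_r(0)=0$, the formula gives $E'(0)=0$ immediately. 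Your proposed barrier/Gronwall argument to squeeze $E-h=o(r)$ is therefore unnecessary, and as written it is not complete: when the indicial exponent exceeds $1$ the bounded solution is not selected by $w(0)=0$ alone (there is a one-parameter family of $C^0$ solutions, cf.\ Example~1 in Appendix~3), so one really needs the regularity statement of \autoref{prop:FH2008} rather than an ad hoc comparison.
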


\begin{proof}
\textbf{The case $\tau>0$.} Noting that $(\tau,R]$ is the maximal existence interval, we get $u(\tau)=0$ and $I(\tau)=0$. From $I(\tau)=I(R)=0$ and
$$I'(r)=\mu(E(r))(\sigma(r)-\bar{\sigma})\rho^{2},\mbox{ and }\sigma_{r}>0,$$
we see that $\sigma(r)-\bar{\sigma}=0$ has a root in $[\tau,R]$. Moreover, the root is unique and in $(\tau,R)$, $\sigma(\tau)-\bar{\sigma}<0$.
Since $E\in C[\tau,R]$ and
$$u(r) = \frac{1}{r^{2}}\int_{R}^{r}\mu({E})(\sigma(\rho)-\bar{\sigma})\rho^{2}d\rho = \frac{1}{r^{2}}\int_{\tau}^{r}\mu({E})(\sigma(\rho)-\bar{\sigma})\rho^{2}d\rho,$$
we find $u\in C^{1}[\tau,R]$ and $u(\tau)=0$, $u_{r}(\tau)=\mu(E(\tau))(\sigma(\tau)-\bar{\sigma})<0$.

If $E(\tau)\neq h(\tau)$, then $C_{1}=Q(\sigma(\tau, R), m, E(\tau, R))\neq0$. Setting $C_{2}=u'(\tau)<0$, we deduce from equation \eqref{LLL16} that
$$E'=\frac{1}{r-\tau}\frac{C_{1}+o(1)}{C_{2}+o(1)}\mbox{ near } r=\tau.$$
Integrating the above equality, we see that $E$ is unbounded near $r=\tau$. This yields a contradiction. Hence, $E(\tau)=h(\tau, R)$.

From \eqref{sss03Q}, we get
$$\begin{aligned}
  \theta=&\frac{Q_{E}(\sigma(\tau),m,E(\tau))}{u_{r}(\tau)}
  =\frac{Q_{E}(\sigma(\tau),m,E(\tau))}{\mu(E(\tau))(\sigma(\tau)-\bar{\sigma})}\\
  =&\frac{Q_{E}(\sigma(\tau),m,E(\tau))-\mu(E(\tau))(\sigma(\tau)-\bar{\sigma})}
  {\mu(E(\tau))(\sigma(\tau)-\bar{\sigma})}+1\\
  >&1.
\end{aligned}$$
Now we apply \autoref{lem:FH2008} for equation \eqref{LLL12E} to obtain that $E\in C^{1}[\tau,R]$ and
$$E_{r}(\tau)=\frac{Q_{\sigma}\sigma_{r}}{u_{r}-Q_{E}}|_{r=\tau}
=\frac{E(\tau)\mu(E(\tau))\sigma_{r}(\tau)}{Q_E(\sigma(\tau),m, E(\tau)) - \mu(E)(\sigma(\tau)-\bar{\sigma})}<0.$$

\textbf{The case $\tau=0$ and $I(0)=0$.}
Noting that
$$u(r) = \frac{1}{r^{2}}\int_{R}^{r}\mu(E)(\sigma(\rho)-\bar{\sigma})\rho^{2}d\rho = \frac{1}{r^{2}}\int_{0}^{r}\mu(E)(\sigma(\rho)-\bar{\sigma})\rho^{2}d\rho,$$
we get that $u$ is differentiable at $r=0$, and $u'(0)=\mu(E(0))(\sigma(0)-\bar{\sigma})/3<0$.

If $C_{1}\neq0$ where $C_{1}=Q(\sigma(0), m, E(0))$, then from equation \eqref{LLL16},
$$E'=\frac{1}{r}\frac{C_{1}+o(1)}{C_{2}+o(1)}\mbox{ near }r=0,$$
where as before $C_{2}=u'(0)=\mu(E(0))(\sigma(0)-\bar{\sigma})/3<0$.
This contradicts the boundedness of $E$ and therefore, $C_{1}=Q(\sigma(0), m, E(0))=0$,  $E(0)=h(0)$.

From \eqref{sss03Q}, we get
$$\begin{aligned}
  \theta=&\frac{Q_{E}((\sigma(0),m,E(0)))}{u_{r}(0)}
  =\frac{Q_{E}((\sigma(0),m,E(0)))}{\mu(E(0))(\sigma(0)-\bar{\sigma})/3}\\
  =&\frac{3Q_{E}((\sigma(0),m,E(0)))-\mu(E(0))(\sigma(0)-\bar{\sigma})}
  {\mu(E(0))(\sigma(0)-\bar{\sigma})}+1\\
  >&1.
\end{aligned}$$
Applying \autoref{lem:FH2008} for equation \eqref{LLL12E}, we obtain that $E\in C^{1}[\tau,R]$ and
$$E'(0)=\frac{3E(0)\mu(E(0))\sigma_{r}(0)}
{3Q_E(\sigma(0),m)-\mu(E(0))(\sigma(0)-\bar{\sigma})}=0.$$
This completes the proof.
\end{proof}
\begin{rmk}
  The assumption \eqref{sss03Q} on $Q$, used in \autoref{lem:207}, only guarantee that the solution $E$ obtained in \autoref{lem:204} is differentiable at the minimal existence value $r=\tau$. So the radially symmetric stationary solution obtained later in \autoref{thm:211} satisfies $E\in C^{1}[0,1]$.
\end{rmk}

\subsection{Continuous dependence with respect to the parameter $R$}

\par\vspace{2ex}\par\noindent 

In this subsection, we give some estimates of $\tau=\tau(R)$ and the continuous dependence with respect to $R$.

\begin{lem}\label{lem:208a}
If $0<R\ll 1$, then $\tau(R)=0$ and
$$I(r,R)<0,\hspace{1em} 0\leq r<R.$$
\end{lem}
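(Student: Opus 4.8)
The plan is to show that for small $R$, the function $I(r,R)=\int_R^r \mu(E(\rho,R))(\sigma(\rho,R)-\bar\sigma)\rho^2\,d\rho$ stays strictly negative on all of $[0,R)$, so the maximal existence interval reaches down to $\tau=0$. The key observation is that when $R$ is small, $\sigma(r,R)=\frac{R}{\sinh(\sqrt\lambda R)}\frac{\sinh(\sqrt\lambda r)}{r}$ is close to $1$ uniformly on $[0,R]$; more precisely $\sigma(r,R)\to 1$ as $R\to 0^+$ uniformly in $r\in[0,R]$, since $\frac{R}{\sinh(\sqrt\lambda R)}\to 1$ and $\frac{\sinh(\sqrt\lambda r)}{r}\to 1$ as $r\le R\to 0$. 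Because $\bar\sigma<1$, this forces $\sigma(r,R)-\bar\sigma$ to be bounded below by a positive constant, say $\frac{1-\bar\sigma}{2}$, for all $r\in[0,R]$ once $R$ is sufficiently small.

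First I would record, from \autoref{lem:206}, that on the maximal interval $(\tau(R),R]$ one has $E(R)<E(r)<h(r,R)$ with the uniform two-sided bounds $N_1<E(r,R)<N$ coming from \eqref{sss03Qd}; hence $\mu(E(r,R))$ is trapped between two positive constants $\eta_1<\mu(E)<\eta_2$ independent of $R$ and $r$. Next, combining this with the lower bound $\sigma-\bar\sigma\ge\frac{1-\bar\sigma}{2}>0$ from the previous paragraph, the integrand $\mu(E(\rho,R))(\sigma(\rho,R)-\bar\sigma)\rho^2$ is strictly positive on $(0,R)$, so that for any $r\in(\tau(R),R)$,
$$I(r,R)=-\int_r^R \mu(E(\rho,R))(\sigma(\rho,R)-\bar\sigma)\rho^2\,d\rho<0.$$
Thus $I(\cdot,R)$ never vanishes on $(\tau(R),R)$; since by definition $\tau(R)=\inf\{\tau': I<0 \text{ on }(\tau',R)\}$ and the only obstruction to continuing the ODE solution is $I$ hitting $0$ (the singular term $2u/r$ in \eqref{LLL12U} already prevented $\tau<0$), the solution extends all the way to $r=0$, i.e. $\tau(R)=0$, and the displayed inequality $I(r,R)<0$ holds for $0\le r<R$ (at $r=0$ the integral is still strictly negative by the same positivity of the integrand on $(0,R)$).

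The main obstacle is making rigorous the claim that $E(r,R)$ stays bounded in the interval $(\eta_1$-range, $N)$ uniformly as $R\to 0$, so that $\mu(E)$ does not degenerate; but this is exactly guaranteed by \eqref{sss03Qd} and \autoref{lem:206}, which were established independently of the size of $R$. A secondary point to be careful about is the uniformity of the convergence $\sigma(r,R)\to1$: one should note $\sigma$ is increasing in $r$ so its minimum over $[0,R]$ is $\sigma(0,R)=\frac{\sqrt\lambda R}{\sinh(\sqrt\lambda R)}$, and $\sigma(0,R)\to1$ as $R\to0$, which pins down a threshold $R_0$ below which $\sigma(0,R)>\frac{1+\bar\sigma}{2}$, hence $\sigma(r,R)-\bar\sigma>\frac{1-\bar\sigma}{2}$ on $[0,R]$. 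With these two facts in hand the argument is just the positivity of a definite integral, so no delicate estimate is really needed beyond quoting the earlier lemmas.
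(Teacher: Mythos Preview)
Your proposal is correct and follows essentially the same approach as the paper: both argue that for small $R$ one has $\sigma(0,R)\to 1>\bar\sigma$, hence $\sigma(r,R)-\bar\sigma>0$ on $[0,R]$, so $I'(r,R)=\mu(E)(\sigma-\bar\sigma)r^2>0$ on $[\tau(R),R)$ and thus $I(r,R)<0$ there, forcing $\tau(R)=0$. The only difference is cosmetic: you invoke uniform two-sided bounds on $E$ from \eqref{sss03Qd} and \autoref{lem:206} to control $\mu(E)$, but this is unnecessary since $\mu$ is already assumed to be a positive function (bounded below by a positive constant regardless of $E$), so the sign of the integrand follows immediately from $\sigma-\bar\sigma>0$ without any appeal to bounds on $E$.
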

\begin{proof}
Noting that $0<\bar{\sigma}<1$, and
$$\lim_{R\rightarrow0}\sigma(0,R)-\bar{\sigma}
=\lim_{R\rightarrow0}\frac{R}{\sinh(\sqrt{\lambda} R)}-\bar{\sigma}
=1-\bar{\sigma}>0,$$
we see that for $0<R\ll1$,
$$\sigma(r,R)-\bar{\sigma}\geq\sigma(0,R)-\bar{\sigma}>0, r\in[0,R].$$
Observing that
$$I(R,R)=0, I'(r,R)=\mu(E)(\sigma(r)-\bar{\sigma})r^{2}>0,r\in[\tau(R),R),$$
we deduce that for $0<R\ll1$,
$$I(r,R)<0,r\in[\tau(R),R).$$
The proof immediately follows from \autoref{lem:207} (i).
\end{proof}

\begin{lem}\label{lem:208b}
If $R\gg 1,$ then $\tau(R)>0$ and
$$I(r,R)<0,\hspace{0.5em}\tau(R)<r<R,\hspace{0.5em}I(\tau(R),R)=0.$$
\end{lem}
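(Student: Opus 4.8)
The plan is to show that when $R$ is large, the integrand $\mu(E)(\sigma(r,R)-\bar\sigma)r^{2}$ defining $I(r,R)$ changes sign in a way that forces the running integral $I(r,R)=\int_{R}^{r}\mu(E)(\sigma-\bar\sigma)\rho^{2}d\rho$ to return to $0$ before $r$ reaches $0$; i.e., the maximal existence point $\tau(R)$ is strictly positive. First I would record the behavior of $\sigma(\cdot,R)$ for large $R$: since $\sigma(r,R)=\dfrac{R}{\sinh(\sqrt\lambda R)}\dfrac{\sinh(\sqrt\lambda r)}{r}$, the prefactor $\dfrac{R}{\sinh(\sqrt\lambda R)}\to 0$ as $R\to\infty$, so $\sigma(r,R)$ is uniformly small on, say, $[0,R/2]$; in particular $\sigma(r,R)-\bar\sigma<0$ there once $R$ is large enough (because $\bar\sigma>0$ is fixed). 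On the other hand $\sigma(R,R)=1>\bar\sigma$, and $\sigma$ is strictly increasing in $r$ (Lemma~\ref{lem:201}), so there is a unique interior root $r_{0}(R)\in(0,R)$ of $\sigma(r,R)=\bar\sigma$, and by the smallness just noted $r_{0}(R)>R/2$ for $R\gg1$; hence $r_{0}(R)\to\infty$ as $R\to\infty$.

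Next I would examine $I(r,R)$ as $r$ decreases from $R$. On $(r_{0}(R),R)$ the integrand $\mu(E)(\sigma-\bar\sigma)\rho^{2}$ is positive (here $E>0$ by Lemma~\ref{lem:206} and $\mu>0$), so $I(r,R)<0=I(R,R)$ there, consistent with \eqref{LLL19}. For $r<r_{0}(R)$ the integrand becomes negative, so $I(r,R)=I(r_{0}(R),R)+\int_{r_{0}(R)}^{r}(\text{negative})\,d\rho$ is \emph{increasing} as $r$ decreases. The key quantitative point is to show this increase is large enough to exhaust the deficit $I(r_{0}(R),R)<0$ before $r$ can reach $0$. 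For the deficit I would use the upper bound $|I(r_{0}(R),R)|\le \int_{r_{0}(R)}^{R}\eta_{2}\,|\sigma-\bar\sigma|\,\rho^{2}d\rho\le \eta_{2}(1-\bar\sigma)\cdot\frac{R^{3}-r_{0}(R)^{3}}{3}$; but more usefully, since on $(r_{0},R)$ we have $0<\sigma-\bar\sigma<1-\bar\sigma$ and the interval $(r_{0}(R),R)$ is short relative to $R$ when $r_{0}(R)\to\infty$ — more carefully, I would bound $\sigma(r,R)-\bar\sigma$ on a fixed-proportion slab below $r_{0}(R)$, say $[r_{0}(R)/2,\,r_{0}(R)]$, away from $0$ from below (using that $\sigma$ drops at a definite rate as $r$ decreases past $r_{0}$, which follows from $\sigma_{r}>0$ together with the explicit formula — in fact $\sigma(r_{0}/2,R)-\bar\sigma$ is bounded below by a negative constant uniformly in $R$ because $\frac{\sinh(\sqrt\lambda r)}{r}$ is monotone). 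Then the gain $\int_{r_{0}/2}^{r_{0}}|\mu(E)(\sigma-\bar\sigma)|\rho^{2}d\rho\ge \eta_{1}\cdot c_{0}\cdot \frac{r_{0}^{3}-(r_{0}/2)^{3}}{3}=\eta_{1}c_{0}\frac{7}{24}r_{0}(R)^{3}$ grows like $r_{0}(R)^{3}\to\infty$, while the deficit $|I(r_{0},R)|\le \eta_{2}(1-\bar\sigma)\frac{R^{3}-r_{0}(R)^{3}}{3}$ — I would need $R^{3}-r_{0}(R)^{3}=o(r_{0}(R)^{3})$, which holds since $R/r_{0}(R)\to 1$ as $R\to\infty$ (both $\to\infty$ and $\sigma(r,R)=\bar\sigma$ forces $\sqrt\lambda r_{0}\sim \sqrt\lambda R - \log(\text{const}\cdot R)$, so $R-r_{0}(R)=O(\log R)$ and $R^{3}-r_{0}(R)^{3}=O(R^{2}\log R)=o(R^{3})$). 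Hence for $R$ large the running integral $I(r,R)$ crosses back up to $0$ at some $\tau(R)\in(0,r_{0}(R)/2)\subset(0,R)$, and by definition of $\tau(R)$ as the infimum of points from which $I<0$ on the way to $R$, we get $I(\tau(R),R)=0$ and $I(r,R)<0$ on $(\tau(R),R)$; that $\tau(R)>0$ is exactly what we just showed, and then Lemma~\ref{lem:207}(i) applies at $r=\tau(R)$.

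The main obstacle is the quantitative comparison in the second paragraph: one must make precise that the asymptotics of the root $r_{0}(R)$ of $\sigma(\cdot,R)=\bar\sigma$ satisfy $R-r_{0}(R)=O(\log R)$ (so that the "positive-integrand" collar $(r_{0}(R),R)$ contributes only $o(R^{3})$ to $I$) while the "negative-integrand" region below $r_{0}(R)$ contributes an amount of order $R^{3}$, using the two-sided bound $\eta_{1}<\mu(E)<\eta_{2}$ from \eqref{muEbd} (or rather its stationary analogue, $\eta_{1}<\mu(E(r,R))<\eta_{2}$, which holds because $N_{1}<E(r,R)\le h(r,R)\le N$ by Lemmas~\ref{lem:206} and~\eqref{sss03Qd} and $\mu$ is monotone bounded). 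Everything else — positivity and monotonicity of $\sigma$, the sign structure of $I'$, and the identification of $\tau(R)$ — is a direct consequence of results already established in Lemmas~\ref{lem:201}, \ref{lem:206}, and~\ref{lem:207}.
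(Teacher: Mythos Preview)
Your proposal is correct and follows the same strategy as the paper: argue by contradiction that if $\tau(R)=0$ then $\int_{0}^{R}\mu(E)(\sigma-\bar\sigma)\rho^{2}d\rho\ge 0$, and derive a contradiction by showing the negative ``bulk'' contribution of order $R^{3}$ overwhelms the positive ``collar'' contribution near $r=R$, using the uniform bounds $K_{1}<\mu(E(r,R))<K_{2}$ that follow from $N_{1}<E<N$ (Lemma~\ref{lem:206} and \eqref{sss03Qd}).

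The only difference is in the bookkeeping of the cutoff. You track the actual zero $r_{0}(R)$ of $\sigma(\cdot,R)-\bar\sigma$ and analyze its asymptotics; the paper instead fixes a constant $K$ with $e^{-\sqrt\lambda K}=\bar\sigma/3$, observes directly that $\sigma(R-K,R)\to\bar\sigma/3$, hence $\sigma(r,R)-\bar\sigma<-\tfrac{1}{2}\bar\sigma$ on $[0,R-K]$ for $R\gg 1$, and then compares
\[
\int_{0}^{R-K}\mu(E)(\sigma-\bar\sigma)\rho^{2}d\rho\;<\;-\tfrac{1}{2}\bar\sigma\,K_{1}\,\tfrac{(R-K)^{3}}{3},
\qquad
\int_{R-K}^{R}\mu(E)(\sigma-\bar\sigma)\rho^{2}d\rho\;<\;2K_{2}\,\tfrac{R^{3}-(R-K)^{3}}{3}.
\]
Since $R^{3}-(R-K)^{3}=O(R^{2})$ while $(R-K)^{3}\sim R^{3}$, the sum is negative for large $R$. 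This sidesteps your asymptotic analysis of $r_{0}(R)$ entirely. (Incidentally, your asymptotic is sharper than you state: $R-r_{0}(R)\to|\log\bar\sigma|/\sqrt\lambda$, a constant, not merely $O(\log R)$, which is exactly why the paper's fixed-$K$ trick works so cleanly.) Both routes yield the same contradiction; the paper's is just shorter.
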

\begin{proof}
We take $K$ such that
$e^{-\sqrt{\lambda}K}=\bar{\sigma}/3.$
Then
$$\lim_{R\rightarrow\infty}\sigma(R-K,R)
=\lim_{R\rightarrow\infty}\frac{R}{\sinh(\sqrt{\lambda}R)}\frac{\sinh(\sqrt{\lambda}(R-K))}{R-K}
=e^{-\sqrt{\lambda}K}=\frac{1}{3}\bar{\sigma}.$$
Thus, for $R\gg1,$
$$\sigma(r,R)-\bar{\sigma}<\sigma(R-K,R)-\bar{\sigma}<\frac{1}{2}\bar{\sigma},
\hspace{2ex}0\leq\rho\leq R-K.$$
From \eqref{sss03Qd} and \autoref{lem:206}, we conclude $E(r,R)$ is uniformly bounded,
$$N_{1}<E(r,R)<N, r\in(\tau(R),R),$$
where $N_{1}=E(R,R)=h(R,R)$ is independent of $R$. Thus, there exist two positive constants $K_{1}<K_{2}$ (they are independent of $R$) such that
\begin{equation}\label{LLL21} K_{1}<\mu(E(r,R))<K_{2}, r\in(\tau(R),R). \end{equation}
Therefore, if $\tau(R)=0$ for some sufficiently large $R$, then
$$\begin{aligned}
0&\leq\int_{0}^{R}\mu(E)({\sigma-\bar{\sigma}})\rho^{2}d\rho\\
&=\int_{0}^{R-K}\mu(E)(\sigma-\bar{\sigma})\rho^{2}d\rho + \int_{R-K}^{R}\mu(E)({\sigma-\bar{\sigma}})\rho^{2}d\rho\\
&<(-\frac{1}{2}\bar{\sigma})K_{1}\frac{(R-K)^3}{3} + 2K_{2}\frac{(R)^3-(R-K)^3}{3}\\
&<0,
\end{aligned}$$
this is a contradiction. Thus, $\tau(R)>0$ for all sufficiently large $R$.
\end{proof}
\begin{lem}\label{lem:209a}
The function $R\in(0,\infty)\mapsto\tau(R)$ is upper semi-continuous, that is
$$\tau(\bar{R})\geq\limsup_{R\rightarrow\bar{R}}\tau(R)$$
for every $\bar{R}>0$.
\end{lem}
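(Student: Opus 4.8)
The plan is to prove upper semi-continuity by a contradiction argument combined with the continuous dependence result in \autoref{rmk:205}. Suppose the conclusion fails for some $\bar R>0$; then there is a sequence $R_k\to\bar R$ with
$$\lim_{k\to\infty}\tau(R_k)=:\tau_*>\tau(\bar R).$$
Passing to a subsequence we may assume $\tau(R_k)$ converges to $\tau_*$, and also that $\tau(R_k)>\tau(\bar R)$ and indeed $\tau(R_k)\ge \tau(\bar R)+\varepsilon_0$ for all large $k$, for some fixed $\varepsilon_0>0$. The idea is that the solution $(E(\cdot,\bar R),u(\cdot,\bar R))$ exists and is "nondegenerate" on a closed interval strictly containing $[\tau(\bar R),\bar R]$ in its interior after perturbation — but $\tau(R_k)$ staying bounded away from $\tau(\bar R)$ will contradict the fact that the solutions for $R_k$ converge to that for $\bar R$ on compact subintervals.

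First I would make the case distinction according to whether $\tau(\bar R)=0$ or $\tau(\bar R)>0$. Consider first $\tau(\bar R)>0$. By \autoref{lem:207}(i) we have $I(\tau(\bar R),\bar R)=0$ and $u_r(\tau(\bar R),\bar R)=\mu(E(\tau(\bar R)))(\sigma(\tau(\bar R))-\bar\sigma)<0$, so $I(r,\bar R)$ strictly decreases through $0$ at $r=\tau(\bar R)$; hence for any small $\delta>0$ we have $I(\tau(\bar R)+\delta,\bar R)<0$. Pick $\delta<\varepsilon_0/2$; then on the compact interval $[\tau(\bar R)+\delta,\bar R]$ the denominator $I(\cdot,\bar R)=\rho^2 u$ stays strictly negative and $E(\cdot,\bar R)$ stays strictly positive (by \eqref{LLL19} and \autoref{lem:206}), so there is no singularity, and the pair $(E(\cdot,\bar R),u(\cdot,\bar R))$ solves the nonsingular ODE system \eqref{LLL12E}–\eqref{LLL12U} on a closed interval $[\hat r,\check r]\ni\bar R$ with $\hat r=\tau(\bar R)+\delta$. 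Now apply \autoref{rmk:205}: there is $\eta>0$ such that for $R\in[\bar R-\eta,\bar R+\eta]$ the solution $(E(\cdot,R),u(\cdot,R))$ is well-defined on $[\hat r,\check r]$ and is continuous in $(r,R)$ there. By continuity, for $k$ large, $I(r,R_k)<0$ and $E(r,R_k)>0$ on all of $[\hat r,\check r]$, in particular at $r=\hat r=\tau(\bar R)+\delta<\tau(\bar R)+\varepsilon_0\le\tau(R_k)$. But $\tau(R_k)$ is by definition the infimum of $\tau'$ with $I(\cdot,R_k)<0$ on $(\tau',R_k)$, so $I(\cdot,R_k)<0$ on $(\hat r,R_k)$ forces $\tau(R_k)\le\hat r<\tau(R_k)$, a contradiction.

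For the case $\tau(\bar R)=0$: here $\tau_*>0$, and by \autoref{lem:207}(ii) (applicable since $\tau(\bar R)=0$ forces $I(0,\bar R)=0$, as $I(0^+,\bar R)\le 0$ by \eqref{LLL19} and $I(0,\bar R)<0$ would put $\tau(\bar R)$ strictly positive) we get $u(0,\bar R)=0$, $E(0,\bar R)=h(0,\bar R)>0$, and $E\in C^1[0,\bar R]$ with $E'(0)=0$; moreover $u'(0,\bar R)=\mu(E(0))(\sigma(0)-\bar\sigma)/3<0$ by \eqref{sss04R}, so again $I(r,\bar R)<0$ for small $r>0$ and $E(r,\bar R)>0$ there. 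Choose $\hat r>0$ small with $\hat r<\tau_*/2$ (and hence $\hat r<\tau(R_k)$ for large $k$) on which $I(\cdot,\bar R)<0$, $E(\cdot,\bar R)>0$, and then run exactly the same \autoref{rmk:205} argument on $[\hat r,\check r]$ to derive $\tau(R_k)\le\hat r$, contradicting $\tau(R_k)\ge\tau_*/2>\hat r$ for large $k$. In both cases the contradiction shows $\tau(\bar R)\ge\limsup_{R\to\bar R}\tau(R)$.

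The main obstacle is ensuring the nondegeneracy (denominator bounded away from $0$, $E$ bounded away from $0$) on the chosen compact subinterval $[\hat r,\check r]$ so that \autoref{rmk:205} genuinely applies — this hinges on the strict sign $u_r(\tau(\bar R),\bar R)<0$ (respectively $u'(0,\bar R)<0$) established via \autoref{lem:207}, which guarantees $I$ crosses zero transversally at the left endpoint and is therefore strictly negative an interval's worth to the right of it. Once that transversality is in hand, the rest is a routine compactness-and-continuity argument, and no further estimates are needed.
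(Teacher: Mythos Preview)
Your approach is correct and is essentially the same as the paper's: both rely on \autoref{rmk:205} to conclude that the solution for $R$ near $\bar R$ persists on any compact subinterval of $(\tau(\bar R),\bar R]$, which forces $\tau(R)$ below any prescribed $T>\tau(\bar R)$. The paper's argument is considerably more direct, however: it simply fixes an arbitrary $T\in(\tau(\bar R),\bar R)$, applies \autoref{rmk:205} on an interval $[T,\check r]$ with $\check r>\bar R$, deduces $\tau(R)<T$ for all $R$ in a neighborhood of $\bar R$, and then lets $T\downarrow\tau(\bar R)$. No contradiction framework, no case split on whether $\tau(\bar R)=0$, and no transversality discussion is needed, because the strict negativity of $I(\cdot,\bar R)$ on the open interval $(\tau(\bar R),\bar R)$ is already built into \eqref{LLL19} and the definition of $\tau$.

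One incorrect step in your write-up: in the case $\tau(\bar R)=0$ you assert that ``$I(0,\bar R)<0$ would put $\tau(\bar R)$ strictly positive'' and then invoke \autoref{lem:207}(ii). This is false; \autoref{lem:208a} exhibits precisely the situation $\tau(R)=0$ with $I(0,R)<0$ for all small $R$ (the maximal interval terminates at $r=0$ because of the singularity $2u/r$, not because $I$ vanishes). Fortunately this detour is entirely unnecessary: all you need is $I(\hat r,\bar R)<0$ and $E(\hat r,\bar R)>0$ for some $\hat r\in(0,\tau_*)$, and that follows immediately from $\tau(\bar R)=0$ together with \eqref{LLL19}, without any appeal to \autoref{lem:207} or to \eqref{sss04R}.
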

\begin{proof}
Let $\bar{R}>0$ be fixed and $T\in(\tau(\bar{R}),\bar{R})$ be any fixed constant. Then from \autoref{rmk:205}, we see that
the solution $E(r,R),u(r,R)$ of \eqref{LLL12E}, \eqref{LLL12U} and \eqref{LLL12U1} exist at least for $r\in [T,R)$ and for $R\in[\bar{R}-\delta,\bar{R}+\delta]$ for some $\delta>0$. Therefore, $\tau(R)$ satisfies
$$\tau(R)<T\mbox{ for every }R\in[\bar{R}-\delta,\bar{R}+\delta],$$
and then
$$\limsup_{R\rightarrow\bar{R}}\tau(R)\leq T.$$
Since $T\in(\tau(\bar{R}),\bar{R})$ is arbitrary, we finish the proof.
\end{proof}

\begin{lem}\label{lem:209b}
The function $\tau(R)$ is lower semi-continuous, that is, for any $\bar{R}>0$, we have
\begin{equation}\label{LLL22}
\tau(\bar{R})\leq \liminf_{R\rightarrow\bar{R}}\tau(R).
\end{equation}
Thus, the function $R\in(0,\infty)\mapsto\tau(R)$ is continuous.
\end{lem}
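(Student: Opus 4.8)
The plan is to argue by contradiction, using the structure already extracted in the previous lemmas — in particular that $\tau(R)$ is the first value (going down from $R$) at which the denominator $I(r,R)$ vanishes, and that $I\in C^1$ up to $\tau$ with $I(\tau,R)=0$, together with the sign information $u_r(\tau)=\mu(E(\tau))(\sigma(\tau)-\bar\sigma)<0$ from \autoref{lem:207}(i). Fix $\bar R>0$. By \autoref{lem:209a} we already have $\tau(\bar R)\ge\limsup_{R\to\bar R}\tau(R)$, so to get continuity it remains only to establish \eqref{LLL22}. If \eqref{LLL22} fails, there is a sequence $R_k\to\bar R$ with $\tau(R_k)\to\ell<\tau(\bar R)$. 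The case $\tau(\bar R)=0$ is handled by \autoref{lem:208a}, so we may assume $\tau(\bar R)>0$, hence $\ell<\tau(\bar R)<\bar R$ and we may pick $T$ with $\ell<T<\tau(\bar R)$.

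First I would invoke continuous dependence on the parameter $R$ in the form already recorded in \autoref{rmk:205} (itself a consequence of \autoref{thm:S03B}): since $(E,u)(\cdot,\bar R)$ solves \eqref{LLL12E}--\eqref{LLL12U1} on $[\,\tau(\bar R),\bar R\,]$ with $u<0$ on $(\tau(\bar R),\bar R)$, and $u_r(\tau(\bar R))<0$, the solution $(E,u)(\cdot,\bar R)$ in fact extends a little past $r=\tau(\bar R)$ to the left — on some interval $[\tau(\bar R)-\varepsilon_0,\bar R]$ — with $u$ becoming \emph{positive} just to the left of $\tau(\bar R)$ (because $I(\tau(\bar R),\bar R)=0$ and $I'(\tau(\bar R),\bar R)=\mu(E)(\sigma-\bar\sigma)\tau^2<0$, so $I>0$ for $r<\tau(\bar R)$ nearby). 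Choose such an $\varepsilon_0$ with $\tau(\bar R)-\varepsilon_0>T$. Now apply \autoref{rmk:205} on the compact interval $[\tau(\bar R)-\varepsilon_0,\bar R]\subset(\tau(\bar R)-2\varepsilon_0,\bar R+1)$: there is $\delta>0$ so that for all $R\in[\bar R-\delta,\bar R+\delta]$ the solution $(E,u)(\cdot,R)$ exists on that whole interval and depends continuously on $R$ there.

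The key step is then to exploit this uniform existence together with the sign of $u$. For $k$ large, $R_k\in[\bar R-\delta,\bar R+\delta]$ and $\tau(R_k)<T<\tau(\bar R)-\varepsilon_0$; but by definition of $\tau(R_k)$ as a maximal-existence endpoint with $u(\cdot,R_k)<0$ on $(\tau(R_k),R_k)$, this forces $u(r,R_k)<0$ for $r\in(\tau(\bar R)-\varepsilon_0,R_k)$, in particular at the point $r_*:=\tau(\bar R)-\varepsilon_0/2$ we get $u(r_*,R_k)\le 0$ for all large $k$. Letting $k\to\infty$ and using continuity in $R$ from \autoref{rmk:205} gives $u(r_*,\bar R)\le 0$. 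This contradicts the fact, established in the previous paragraph, that $u(r_*,\bar R)>0$ since $r_*<\tau(\bar R)$ lies in the region where the extended solution has $u>0$. Hence \eqref{LLL22} holds, and combined with \autoref{lem:209a} we conclude $\tau$ is continuous on $(0,\infty)$.

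I expect the main obstacle to be the bookkeeping in the first extension step: one must be careful that the extended solution of \eqref{LLL12E}--\eqref{LLL12U} past $r=\tau(\bar R)$ is genuinely governed by a nonsingular ODE system there (the only singularities of \eqref{LLL12} are at $r=0$ and $r=R$, and at an \emph{interior} zero $\tau(\bar R)\in(0,\bar R)$ the system \eqref{LLL12E}--\eqref{LLL12U} is regular once we know $u_r(\tau(\bar R))\ne 0$, which is exactly \autoref{lem:207}(i)), and that the sign of $u$ to the left of $\tau(\bar R)$ is strictly positive rather than merely nonnegative — this is what produces the contradiction and it relies on $I'(\tau(\bar R),\bar R)=\mu(E(\tau(\bar R)))(\sigma(\tau(\bar R))-\bar\sigma)\,\tau(\bar R)^2<0$, i.e.\ on $\sigma(\tau(\bar R))<\bar\sigma$, which again is part of \autoref{lem:207}(i). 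Everything else is a routine compactness-plus-continuous-dependence argument.
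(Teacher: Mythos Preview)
Your argument has a genuine gap at the extension step. You assert that the $\bar R$-solution of \eqref{LLL12E}--\eqref{LLL12U1} can be continued past $r=\tau(\bar R)$ to the left and that \autoref{rmk:205} then yields continuous dependence on an interval $[\tau(\bar R)-\varepsilon_0,\bar R]$. But the equation $uE'=Q$ is singular precisely where $u=0$; the fact that $u_r(\tau(\bar R))\neq 0$ only says the zero is simple, it does not make the system regular there. At this interior zero the relevant quantity in condition \ref{item14} is
\[
\theta=\frac{Q_E(\sigma(\tau(\bar R)),m,E(\tau(\bar R)))}{u_r(\tau(\bar R))},
\]
and \autoref{lem:207}(i) computes $\theta>1$. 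Hence \autoref{thm:S03A} and \autoref{thm:S03B} (and therefore \autoref{rmk:205}) simply do not apply across $r=\tau(\bar R)$. In fact for $\theta>1$ the linearized model $e'=\theta e/(r-\tau)+c$ has the one-parameter family $e=\frac{c}{1-\theta}(r-\tau)+C|r-\tau|^\theta$ of $C^1$ solutions through the singular point, so uniqueness of any extension --- and hence continuous dependence on $R$ across $\tau(\bar R)$ --- is not available. Your contradiction relies on exactly this unavailable continuity statement.

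The paper avoids this difficulty by never extending the $\bar R$-solution. Working entirely on the domains of the $R_k$-solutions (where $\tau(R_k)<\bar\tau-\delta_2$), it estimates $I(\bar\tau-\delta_2,R_k)$ directly via the splitting
\[
I(\bar\tau-\delta_2,R_k)=\bigl[I(\bar\tau-\delta_2,R_k)-I(\bar\tau+\delta,R_k)\bigr]+\bigl[I(\bar\tau+\delta,R_k)-I(\bar\tau+\delta,\bar R)\bigr]+\bigl[I(\bar\tau+\delta,\bar R)-I(\bar\tau,\bar R)\bigr].
\]
The middle term tends to $0$ by continuous dependence at the \emph{interior} point $\bar\tau+\delta>\tau(\bar R)$ (where \autoref{rmk:205} does apply), the last term is $O(\delta)$, and the first term is bounded below by a fixed positive constant using only the uniform bounds $K_1<\mu(E(\cdot,R_k))<K_2$ from \eqref{LLL21} and the sign $\sigma-\bar\sigma<0$ near $\bar\tau$. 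Sending $\delta\to 0$ yields $\liminf_k I(\bar\tau-\delta_2,R_k)>0$, contradicting $I(\cdot,R_k)<0$ on $(\tau(R_k),R_k)$. The point is that this argument needs no information about $E(\cdot,R_k)$ on $[\bar\tau-\delta_2,\bar\tau+\delta]$ beyond the uniform bound on $\mu(E)$, so it sidesteps the $\theta>1$ obstruction entirely. (A minor remark: the case $\tau(\bar R)=0$ is trivial because $\tau(R)\ge 0$ always; \autoref{lem:208a} is not needed there.)
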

\begin{proof}
Noting that $\tau(R)\geq0$ for $R>0$, we know \eqref{LLL22} holds when $\tau(\bar{R})=0$. Thus, we always assume that $\bar{\tau}=\tau(\bar{R})>0$.
Since $I(\bar{\tau},\bar{R})=0$ we find $\psi(\bar{\tau},\bar{R})<0,$ where $\psi(r,R)=\sigma(r,R)-\bar{\sigma}$. By continuity there exists $\delta_{1}>0$ such that
\begin{equation}\label{LLL23a}
2\psi(\bar{\tau},\bar{R})<\psi(r,R)<\frac{1}{2}\psi(\bar{\tau},\bar{R})<0
\end{equation}
for $r\in[\bar{\tau}-\delta_{1},\bar{\tau}+\delta_{1}]$ and $R\in[\bar{R}-\delta_{1},\bar{R}+\delta_{1}]$.

Suppose that \eqref{LLL22} is false, then there exists $\delta_{2}>0$ and a sequence of $\{R_{k}\}_{k\geq1}$ with $\lim_{k\rightarrow\infty}R_{k}=\bar{R}$ such that
\begin{equation}\label{LLL23b}\tau(R_{k})<\bar{\tau}-\delta_{2}, k\geq1.\end{equation}
Without loss of generality, we may assume that $0<\delta_{2}<\delta_{1}<\bar{\tau}$ and $R_{k}\in[\bar{R}-\delta_{1},\bar{R}+\delta_{1}]$, $k\geq1$.
In order to yield a contradiction, we will show that $I(\bar{\tau}-\delta_{2},R_{k})>0$ for large $k$. By noting $I(\bar{\tau},\bar{R})=0$, we split $I(\bar{\tau}-\delta_{2},R_{k})$ into three parts
$$\begin{aligned}
I(\bar{\tau}-\delta_{2},R_{k})=&[ I(\bar{\tau}-\delta_{2},R_{k})-I(\bar{\tau}+\delta,R_{k})]\\
& + [I(\bar{\tau}+\delta,R_{k})-I(\bar{\tau}+\delta,\bar{R})]
+ [I(\bar{\tau}+\delta,\bar{R})-I(\bar{\tau},\bar{R})].
\end{aligned}$$
From \eqref{sss02ua}, \eqref{LLL21} and \eqref{LLL23a}, we have
\begin{equation}\label{LLL24a}\begin{aligned}
I(\bar{\tau}-\delta_{2},R_{k})-I(\bar{\tau}+\delta,R_{k})
=&-\int_{\bar{\tau}-\delta_{2}}^{\bar{\tau}+\delta}\mu(E(r,R_{k}))\psi(r,R_{k})r^{2}dr\\
\geq&-\frac{1}{2}\psi(\bar{\tau},\bar{R})(\delta+\delta_{2})(\bar{\tau}-\delta_{2})^{2}K_{1}>0,
\end{aligned}\end{equation}
\begin{equation}\label{LLL24b}
|I(\bar{\tau},\bar{R})-I(\bar{\tau}+\delta,\bar{R})|
\leq \int_{\bar{\tau}}^{\bar{\tau}+\delta}\mu(E(r,\bar{R}))|\psi(r,\bar{R})|r^{2}dr
\leq -2\psi(\bar{\tau},\bar{R})(\bar{\tau}+\delta)^{2}\delta K_{2}.
\end{equation}
Note that $E$, $u$ and $I$ are continuous in $(r,R)$ at $(r,R)=(\bar{\tau}+\delta,\bar{R})$, we derive
\begin{equation}\label{LLL24c}
\lim_{k\rightarrow\infty}I(\bar{\tau}+\delta,R_{k})-I(\bar{\tau}+\delta,\bar{R})=0.
\end{equation}
Combining \eqref{LLL24a}, \eqref{LLL24b} and \eqref{LLL24c}, we get
$$\liminf_{k\rightarrow\infty}I(\bar{\tau}-\delta_{2},R_{k})\geq
-\frac{1}{2}\psi(\bar{\tau},\bar{R})(\delta+\delta_{2})(\bar{\tau}-\delta_{2})^{2}K_{1}
+2\psi(\bar{\tau},\bar{R})(\bar{\tau}+\delta)^{2}\delta K_{2} $$
and by letting $\delta\rightarrow0$
$$\liminf_{k\rightarrow\infty}I(\bar{\tau}-\delta_{2},R_{k})\geq
-\frac{1}{2}\psi(\bar{\tau},\bar{R})\delta_{2}(\bar{\tau}-\delta_{2})^{2}K_{1}>0.$$
This contradicts \eqref{LLL23b}, By the definition of $\tau(R_k)$
we conclude that $I(\bar{\tau}-\delta_{2},R_{k})<0$. Thus \eqref{LLL22} holds.

Combining \autoref{lem:209a} and \eqref{LLL22}, we find that the function $R\in(0,\infty)\mapsto\tau(R)$ is continuous.
\end{proof}

\begin{lem}\label{lem:209c}
The function $I=I(r,R)$ is continuous in $(r,R)$ for $R>0,\tau(R)\leq r\leq R$.
\end{lem}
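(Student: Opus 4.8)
\emph{Strategy.} The plan is to split the region $\{(r,R):R>0,\ \tau(R)\le r\le R\}$ into the regular part $\tau(R)<r\le R$, where continuity of $I$ will follow from the continuous dependence of $(E,u)$ on $R$ already established in \autoref{rmk:205}, and the lower endpoint $r=\tau(R)$, which is the delicate case because there the integro-differential equation \eqref{LLL16} is singular. The one estimate I would prepare first is a uniform Lipschitz bound for $I$ in the $r$-variable: by \autoref{lem:206} and \eqref{sss03Qd} one has $N_{1}\le E(r,R)\le N$ for all $\tau(R)\le r\le R$, with $N_{1}=h(R,R)$ independent of $R$, and since $\mu$ is continuous while $0<\sigma(r,R)\le\sigma(R,R)=1$ with $\bar\sigma\in(0,1)$, this gives, on every strip $\{|R-R_{0}|\le\delta\}$, a constant $C$ (depending only on $R_{0},\delta$) with
\[\Big|\frac{\partial I}{\partial r}(r,R)\Big|=\big|\mu(E(r,R))\,(\sigma(r,R)-\bar\sigma)\,r^{2}\big|\le C,\qquad \tau(R)\le r\le R,\ |R-R_{0}|\le\delta .\]
In particular $I(\cdot,R)$ is Lipschitz in $r$ with a constant independent of $R$, so $|I(r,R)-I(r',R)|\le C|r-r'|$ whenever both $(r,R),(r',R)$ lie in the region.

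\emph{The regular part.} Fix $(r_{0},R_{0})$ with $\tau(R_{0})<r_{0}\le R_{0}$. By \autoref{lem:204} and the extension argument following it, $(E,u)$ solves \eqref{LLL12E}, \eqref{LLL12U}, \eqref{LLL12U1} on $(\tau(R_{0}),R_{0}+\eta]$ for some $\eta>0$, so I would choose $\hat r\in(\tau(R_{0}),r_{0})$ and $\check r\in(R_{0},R_{0}+\eta)$; then $R_{0}\in(\hat r,\check r)$, and \autoref{rmk:205} provides $\delta>0$ such that $E(\rho,R),u(\rho,R)$ are defined and continuous on $[\hat r,\check r]\times[R_{0}-\delta,R_{0}+\delta]$. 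Since $\sigma(\rho,R)$ is the explicit smooth function \eqref{LLL13psi} and $\mu$ is continuous, the integrand $\mu(E(\rho,R))(\sigma(\rho,R)-\bar\sigma)\rho^{2}$ is continuous on this box, and for $(r,R)$ in it with $r\le R$ the interval of integration $[r,R]$ remains inside $[\hat r,\check r]$; hence $I(r,R)=\int_{R}^{r}\mu(E(\rho,R))(\sigma(\rho,R)-\bar\sigma)\rho^{2}\,d\rho$ is continuous at $(r_{0},R_{0})$. This step already covers the diagonal $r_{0}=R_{0}$.

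\emph{The lower endpoint, and the main obstacle.} It remains to prove continuity at $(\tau_{0},R_{0})$ with $\tau_{0}=\tau(R_{0})$, and this is where the work lies: at $r=\tau(R)$ the denominator $u$ of \eqref{LLL16} vanishes, so \autoref{rmk:205} — which rests on classical continuous dependence for the ODE away from its singular points — cannot by itself carry continuity of $E$ (still less of $E'$) down to $r=\tau(R)$. The remedy is to track not $E$ but the integral $I$, which is uniformly Lipschitz in $r$ by the estimate above. Concretely, for a sequence $(r_{k},R_{k})\to(\tau_{0},R_{0})$ with $\tau(R_{k})\le r_{k}\le R_{k}$, I would fix an auxiliary $r_{*}\in(\tau_{0},R_{0})$; since $R\mapsto\tau(R)$ is continuous (\autoref{lem:209b}) one has $\tau(R_{k})<r_{*}<R_{k}$ for large $k$, so $I(r_{*},R_{k})$ is defined and $I(r_{*},R_{k})\to I(r_{*},R_{0})$ by the previous step. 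The Lipschitz bound then gives
\[|I(r_{k},R_{k})-I(r_{*},R_{k})|\le C|r_{k}-r_{*}|\longrightarrow C|\tau_{0}-r_{*}|,\qquad |I(\tau_{0},R_{0})-I(r_{*},R_{0})|\le C|\tau_{0}-r_{*}|,\]
so $\limsup_{k\to\infty}|I(r_{k},R_{k})-I(\tau_{0},R_{0})|\le 2C|\tau_{0}-r_{*}|$, and letting $r_{*}\to\tau_{0}$ forces $I(r_{k},R_{k})\to I(\tau_{0},R_{0})$. This yields continuity at the lower endpoint, completing the proof on the whole region.
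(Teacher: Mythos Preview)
Your proof is correct and follows essentially the same strategy as the paper: both combine the uniform Lipschitz bound for $I$ in the $r$-variable (coming from the boundedness of $\mu(E)$ and $\sigma-\bar\sigma$) with the continuous dependence at interior points furnished by \autoref{rmk:205}, glued together via a triangle-inequality argument that shifts to a nearby interior reference point. The only cosmetic difference is that you split explicitly into the interior case and the endpoint $r=\tau(R_{0})$ (invoking the continuity of $\tau$ from \autoref{lem:209b}), whereas the paper handles all points at once by always shifting from $\bar r$ to $\bar r+\hat\delta$.
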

\begin{proof}
  Let $\bar{R}$ and $\bar{r}$ be fixed satisfying $\tau(\bar{R})\leq\bar{r}\leq\bar{R}$.
  From \eqref{LLL21}, we obtain
  $$|I_{r}(r,R)|\leq 2K_{2}r^{2},\tau(R)\leq r\leq R,$$
  where the constant $K_{2}$ is independent of $r,R$.
  For any $\epsilon>0$, we choose a constant $\hat{\delta}$ such that
  $$16\hat{\delta} K_{2}(\bar{r}+\hat{\delta})^{2}<\epsilon.$$
  Using \autoref{lem:204} and \autoref{rmk:205}, we see $I=I(r,R)$ is continuous at $(r,R)=(\bar{r}+\hat{\delta},\bar{R})$ and there is a constant $\delta\in(0,\hat{\delta})$ such that for $R\in[\bar{R}-\delta,\bar{R}+\delta]$,
  $$2|I(\bar{r}+\hat{\delta},R)-I(\bar{r}+\hat{\delta},\bar{R})|<\epsilon.$$
  Therefore, for $r\in[\bar{r}-\delta,\bar{r}+\delta]\cap[\tau(R),R]$ and $R\in[\bar{R}-\delta,\bar{R}+\delta]$, we have
  $$\begin{aligned}
  |I(r,R)-I(\bar{r},\bar{R})|\leq&|I(r,R)-I(\bar{r}+\hat{\delta},R)|
  +|I(\bar{r}+\hat{\delta},R)-I(\bar{r}+\hat{\delta},\bar{R})|
  +|I(\bar{r}+\hat{\delta},\bar{R})-I(\bar{r},\bar{R})|\\
  \leq& 4\hat{\delta} K_{2}(\bar{r}+\hat{\delta})^{2} + |I(\bar{r}+\hat{\delta},R)-I(\bar{r}+\hat{\delta},\bar{R})| + 2\hat{\delta} K_{2}(\bar{r}+\hat{\delta})^{2}\\
  \leq& 8\hat{\delta} K_{2}(\bar{r}+\hat{\delta})^{2} + |I(\bar{r}+\hat{\delta},R)-I(\bar{r}+\hat{\delta},\bar{R})|\\
  <& \frac{1}{2}\epsilon+\frac{1}{2}\epsilon=\epsilon.
  \end{aligned}$$
  It immediately follows that $I$ is continuous at $(\bar{r},\bar{R})$.
\end{proof}

\subsection{Existence and Uniqueness for the stationary system}

\begin{lem}\label{lem:210}
  There exists a positive constant $R^{*}$ such that $\tau(R^{*})=0$ and $I(0, R^{*})=0$.
\end{lem}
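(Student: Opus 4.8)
The plan is to use the intermediate value principle on the continuous function $R\mapsto\tau(R)$, combined with a separate argument that pins down $I(0,R)$ when $\tau(R)=0$. We have already established in \autoref{lem:208a} that $\tau(R)=0$ for all sufficiently small $R>0$, and in \autoref{lem:208b} that $\tau(R)>0$ for all sufficiently large $R$. Since \autoref{lem:209b} shows $R\mapsto\tau(R)$ is continuous on $(0,\infty)$, the set $\{R>0:\tau(R)=0\}$ is relatively closed in $(0,\infty)$, contains an interval $(0,\varepsilon)$, and is bounded above (it is disjoint from a neighborhood of $+\infty$). Let
$$R^{*}=\sup\{R>0:\tau(R)=0\}.$$
Then $R^{*}\in(0,\infty)$, and by continuity $\tau(R^{*})=0$, while for $R$ slightly larger than $R^{*}$ we have $\tau(R)>0$.

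The second and more delicate point is to show that this particular $R^{*}$ also satisfies $I(0,R^{*})=0$. Here I would argue that the function $R\mapsto I(0,R)$ (defined whenever $\tau(R)=0$, which by \autoref{lem:209c} is continuous in that regime) changes sign as $R$ crosses $R^{*}$, or rather that the value $I(0,R^{*})$ must be $0$ because it is squeezed between the two behaviors on either side of $R^{*}$. On the one hand, for $R<R^{*}$ with $\tau(R)=0$, \autoref{lem:208a} and the definition of $\tau$ give $I(r,R)<0$ for $0\le r<R$, and by continuity of $I$ up to $r=\tau(R)=0$ (established via \autoref{lem:206}, so that $I\in C^{1}[0,R]$) we get $I(0,R)\le 0$. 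Letting $R\uparrow R^{*}$ and using continuity of $I(0,\cdot)$, this yields $I(0,R^{*})\le 0$. On the other hand, suppose $I(0,R^{*})<0$. I would then show this forces $\tau(R)=0$ (with $I(0,R)<0$) for $R$ in a neighborhood of $R^{*}$, contradicting $\tau(R)>0$ for $R>R^{*}$: indeed if $I(0,R^{*})<0$ then $I(r,R^{*})<0$ on all of $[0,R^{*}]$ since $I(R^{*},R^{*})=0$ and $I$ vanishes only at isolated points where $\sigma(r,R^{*})-\bar\sigma$ changes sign, together with the fact that $I'(r,R^{*})=\mu(E)(\sigma(r,R^{*})-\bar\sigma)r^{2}$ keeps $I$ strictly below $0$ on the interior; then by the continuous dependence in \autoref{rmk:205} and \autoref{lem:209c}, $I(r,R)<0$ on $[0,R]$ for $R$ near $R^{*}$, whence $\tau(R)=0$ there—a contradiction. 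Therefore $I(0,R^{*})=0$.

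There is one case to rule out before the squeeze argument is clean: that $\tau(R^{*})=0$ but the solution $E$, $u$ fails to be controlled near $r=0$ so that $I(0,R^{*})$ is not even well-defined. This is handled by \autoref{lem:207}: when $\tau=0$, either $I(0,R^{*})=0$ (which is exactly what we want) or $I(0,R^{*})<0$, and in the latter case $u$ is differentiable at $0$ with $u(0)=0$, $u'(0)=\mu(E(0))(\sigma(0,R^{*})-\bar\sigma)/3<0$, so $I$ and $E$ extend continuously (indeed $I\in C^{1}$) up to $r=0$; this legitimizes the use of continuity of $I(0,\cdot)$ and the limiting argument above.

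The main obstacle, as I see it, is the implication "$I(0,R^{*})<0$ implies $\tau(R)=0$ for all $R$ near $R^{*}$." The subtlety is that $\tau(R)$ is defined as an infimum of an open condition, and one must transfer a strict interior inequality $I(r,R^{*})<0$ on $[0,R^{*}]$ to nearby $R$ uniformly; this requires both the continuous dependence of the solution pair $(E(\cdot,R),u(\cdot,R))$ on $R$ on compact subintervals away from the singular endpoints (\autoref{rmk:205}) and the joint continuity of $I$ established in \autoref{lem:209c}, applied near the endpoint $r=0$. Once those are invoked carefully, the rest is the elementary observation that a continuous function that is $0$ at both endpoints of an interval and whose derivative has the sign pattern "negative then positive" (because $\sigma(\cdot,R)-\bar\sigma$ is increasing with a single interior zero) must be strictly negative on the open interval, so having $I(0,R)<0$ forces $I(r,R)<0$ throughout $[0,R)$ and hence $\tau(R)=0$.
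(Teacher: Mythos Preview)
Your proof is correct in substance and rests on exactly the same ingredients as the paper: continuity of $R\mapsto\tau(R)$ (\autoref{lem:209b}), joint continuity of $I$ (\autoref{lem:209c}), and the fact from \autoref{lem:207}(i) that $I(\tau(R),R)=0$ whenever $\tau(R)>0$. The differences are cosmetic. You take $R^{*}=\sup\{R:\tau(R)=0\}$ while the paper takes $R^{*}=\inf\{R:\tau(R)>0\}$, and you argue the second conclusion by contradiction where the paper passes to a limit directly. The paper's version is three lines: pick $R^{j}\downarrow R^{*}$ with $\tau(R^{j})>0$; then $I(\tau(R^{j}),R^{j})=0$ for each $j$ by \autoref{lem:207}(i), and since $\tau(R^{j})\to\tau(R^{*})=0$ by continuity of $\tau$ and $I$ is jointly continuous, the limit gives $I(0,R^{*})=0$ immediately.

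Where your write-up gets muddled is the step ``$I(0,R^{*})<0$ forces $\tau(R)=0$ for $R$ near $R^{*}$''. You frame it as transferring the strict inequality $I(r,R^{*})<0$ uniformly to $[0,R)$ for nearby $R$; but for $R>R^{*}$ the solution---and hence $I(\cdot,R)$---is only defined on $[\tau(R),R]$, not on $[0,R]$, so ``$I(r,R)<0$ on $[0,R]$'' is not even well-posed there. The clean resolution is precisely the contrapositive of the paper's limit: for $R>R^{*}$ one has $\tau(R)>0$, hence $I(\tau(R),R)=0$; since $(\tau(R),R)\to(0,R^{*})$ and $I$ is continuous at $(0,R^{*})$, this forces $I(0,R^{*})=0$, contradicting $I(0,R^{*})<0$. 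Once you say it that way, your ``one case to rule out'' paragraph and the sign-pattern discussion become unnecessary.
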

\begin{proof}
  Let $R^{*}$ be defined as
  $$R^{*}=\inf\{R>0:\tau(R)>0\}.$$
  From Lemmas \ref{lem:208a} and \ref{lem:208b}, we deduce that $R^{*}$ is well-defined and is a positive finite number, and $\tau(R)=0$ for every $0<R<R^{*}$. By the continuity of $R\mapsto\tau(R)$ (see \autoref{lem:209b}), we derive $\tau(R^{*})=0$.
  From the definition of $R^{*}$, there exists a sequence $\{R^{j}\}$ with $R^{j}>R^{*}$ and $\lim_{j\rightarrow\infty}R^{j}=R^{*}$ such that
  $\tau(R^{j})>0.$
  From \autoref{lem:207}, $E(\tau(R^{j}),R^{j})=0$. It immediately follows from Lemmas \ref{lem:209c} and \ref{lem:209b} that $E(\tau(R^{*}),R^{*})=0$.
  This completes the proof.
\end{proof}

Combining Lemmas \ref{lem:210} and \ref{lem:207}, we obtain the existence of radial stationary solution.
\begin{thm}\label{thm:211}
  The radially symmetric stationary problem \eqref{sss02} admits a solution.
\end{thm}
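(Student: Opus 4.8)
The plan is to assemble the stationary solution directly from the shooting construction carried out in the preceding subsections, taking $R$ equal to the critical radius $R^{*}$ produced by \autoref{lem:210}. First I would invoke \autoref{lem:210} to fix a positive number $R^{*}$ with $\tau(R^{*})=0$ and $I(0,R^{*})=0$. With this choice, set $\sigma(r)=\sigma(r,R^{*})$ given by the explicit formula \eqref{LLL13psi}; then \eqref{LLL02Sa} and the boundary conditions \eqref{sss02Sb} hold automatically. Put $m\equiv\alpha/\beta$, so that \eqref{sss02ma} and \eqref{sss02mb} hold (as already noted right after \eqref{sss02}). It remains to produce $E$ and $u$ on $[0,R^{*}]$ satisfying \eqref{sss02Ea}, \eqref{sss02ua} and \eqref{sss02ub}.

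For that, let $E=E(r,R^{*})$ be the solution of the singular initial value problem \eqref{LLL16}, which by \autoref{lem:204} exists and is unique near $r=R^{*}$ and, by the standard ODE continuation described after \autoref{lem:204}, extends to its maximal interval $(\tau(R^{*}),R^{*}]=(0,R^{*}]$, with $E>0$ and the companion function $u$ negative on the interior. Since $\tau(R^{*})=0$ and $I(0,R^{*})=0$, case (ii) of \autoref{lem:207} applies and upgrades this to $E\in C^{1}[0,R^{*}]$ with $E(0)=h(0,R^{*})$, $E'(0)=0$, and $u(0)=0$, where $u(r)=\frac{1}{r^{2}}\int_{R^{*}}^{r}\mu(E(\rho))(\sigma(\rho)-\bar\sigma)\rho^{2}\,d\rho=I(r,R^{*})/r^{2}$. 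By construction $u(R^{*})=0$ as well, so \eqref{sss02ub} holds; differentiating the integral formula for $u$ gives $u'+\frac{2}{r}u=\mu(E)(\sigma-\bar\sigma)$, i.e. \eqref{sss02ua}; and $uE'=Q(\sigma,m,E)$ on $(0,R^{*})$ is exactly \eqref{LLL16}, while at the two endpoints one has $Q(\sigma(r),m,E(r))=0$ (at $r=R^{*}$ because $E(R^{*})=h(R^{*},R^{*})$, at $r=0$ by \autoref{lem:207}(ii)), consistently with $u=0$ there. Hence $(\sigma,m,E,u,R^{*})$ solves all of \eqref{sss02}.

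Thus the proof of \autoref{thm:211} itself is a short verification: essentially all of the work has been front-loaded into the earlier lemmas. The genuine obstacles lie there — namely, that \autoref{lem:208a} and \autoref{lem:208b} pin down $\tau(R)=0$ for small $R$ and $\tau(R)>0$ for large $R$; that \autoref{lem:209a}, \autoref{lem:209b} and \autoref{lem:209c} give continuity of $R\mapsto\tau(R)$ and of $I$, so that an intermediate-value argument on $\{R:\tau(R)>0\}$ yields the special radius $R^{*}$ with $\tau(R^{*})=0$ and $I(0,R^{*})=0$ simultaneously (\autoref{lem:210}); and that the structural hypothesis \eqref{sss03Q}, through \autoref{lem:207}(ii), forces $E\in C^{1}$ up to $r=0$ with $E'(0)=0$ so that the solution is smooth across the tumor center. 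Given this machinery, the only thing to be careful about in writing \autoref{thm:211} is checking that the single value $R^{*}$ really makes every equation and boundary condition in \eqref{sss02} consistent at both ends $r=0$ and $r=R^{*}$, which is precisely why \autoref{lem:210} was phrased to deliver both $\tau(R^{*})=0$ and $I(0,R^{*})=0$.
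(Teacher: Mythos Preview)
Your proposal is correct and follows precisely the paper's own approach: the proof of \autoref{thm:211} in the paper is the single sentence ``Combining Lemmas \ref{lem:210} and \ref{lem:207}, we obtain the existence of radial stationary solution,'' and your write-up is simply a careful unpacking of that sentence. You have correctly identified that all the substance lies in the earlier lemmas and that the theorem itself is just the final assembly step.
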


\subsection{The uniqueness}

In this subsection we establish the uniqueness of the radially symmetric stationary solution.

We use a change of variables that transform the free boundary into a fixed boundary:
$$s=\frac{r}{R},\hspace{2ex}\tilde{E}(s,R)=E(r,R),
\hspace{2ex}\tilde{u}(s,R)=\frac{r^{2}u(r,R)}{R^{3}}.$$
Then the initial problem \eqref{LLL12E}, \eqref{LLL12U}, \eqref{LLL12U1} of $(E,u)$ is transformed to
\begin{subequations}\label{LLL28}\begin{align}
\label{LLL28E}&\tilde{u}\tilde{E}_{s}=s^{2}(-\gamma m\tilde{E}+\phi(\tilde{E})-\tilde{E}\mu(\tilde{E})\tilde{\psi}),\\
\label{LLL28U}&\tilde{u}_{s}=s^{2}\mu(\tilde{E})\tilde{\psi},\\
\label{LLL28U1}&\tilde{u}(1)=0,\tilde{E}(1)=\tilde{h}(1,R),
\end{align}\end{subequations}
where $\tilde{h}(s)=\tilde{h}(s,R)=h(sR,R)$, $h(r,R)$ is solved from \eqref{sss03Qa} and
\begin{equation}\label{LLL28psi}
\tilde{\psi}(s,R)=\sigma(sR,R)-\bar{\sigma} = \frac{\sqrt{\lambda}R}{\sinh(\sqrt{\lambda}R)} \frac{\sinh(s\sqrt{\lambda}R)}{s\sqrt{\lambda}R}-\bar{\sigma}.
\end{equation}
Set $\tilde{\tau}(R)=\tau(R)/R$.

\begin{lem}\label{lem:211}
For any $R_{2}>R_{1}>0$
\begin{equation}\label{LLL30}\tilde{u}(s,R_{2})>\tilde{u}(s,R_{1})\end{equation}
for all $1>s\geq\max\{\tilde{\tau}(R_{1}),\tilde{\tau}(R_{2})\}$.
Moreover, $\tilde{\tau}(R)$ is an increasing function when $\tilde{\tau}(R)>0$.
\end{lem}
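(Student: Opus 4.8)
The statement compares the two shooting solutions $\tilde u(\cdot,R_1)$ and $\tilde u(\cdot,R_2)$ issued from $s=1$ going leftward, and it also asserts monotonicity of $\tilde\tau$. The natural approach is a comparison/continuation argument on the interval where both solutions live, started from the common endpoint $s=1$. First I would record the initial data: from \eqref{LLL28U1} both solutions satisfy $\tilde u(1,R_i)=0$, while $\tilde E(1,R_i)=\tilde h(1,R_i)=h(R_i,R_i)=N_1$, a value \emph{independent of} $R$ (this was noted right after \eqref{sss03Qd}). So the two $E$-profiles and the two $u$-profiles agree at $s=1$; the comparison must therefore be driven entirely by the difference in the forcing term $\tilde\psi(s,R)$ coming from \eqref{LLL28psi}. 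The key monotonicity input is that for fixed $s\in(0,1)$ the function $R\mapsto\tilde\psi(s,R)=\dfrac{\sqrt\lambda R}{\sinh(\sqrt\lambda R)}\dfrac{\sinh(s\sqrt\lambda R)}{s\sqrt\lambda R}-\bar\sigma$ is strictly decreasing in $R$: one checks that $\dfrac{\sinh(sx)}{\sinh x}$ is strictly decreasing in $x>0$ for each fixed $s\in(0,1)$ (equivalently $\log\sinh$ is strictly convex, so the slope of the chord from $0$ is decreasing). Hence $\tilde\psi(s,R_2)<\tilde\psi(s,R_1)$ for all $s\in(0,1)$.

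**Main steps.** Set $w(s)=\tilde u(s,R_2)-\tilde u(s,R_1)$ and $\eta(s)=\tilde E(s,R_2)-\tilde E(s,R_1)$ on the interval $J=[s_0,1]$ with $s_0=\max\{\tilde\tau(R_1),\tilde\tau(R_2)\}$; integrate \emph{backwards} from $s=1$. From \eqref{LLL28U}, differentiating $w$ in $s$ gives $w_s = s^2\bigl[\mu(\tilde E(s,R_2))\tilde\psi(s,R_2)-\mu(\tilde E(s,R_1))\tilde\psi(s,R_1)\bigr]$; write this as $s^2\mu(\tilde E(s,R_2))\bigl(\tilde\psi(s,R_2)-\tilde\psi(s,R_1)\bigr) + s^2\tilde\psi(s,R_1)\bigl(\mu(\tilde E(s,R_2))-\mu(\tilde E(s,R_1))\bigr)$. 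The first term is strictly negative (by the $\tilde\psi$-monotonicity above and $\mu>0$), so moving leftward from $s=1$, $w$ increases; to control the second (sign-indefinite) term I would couple it with the $E$-equation \eqref{LLL28E}: dividing by $\tilde u<0$ and using assumption \eqref{sss03Q} (which forces $Q_E<0$, hence a one-sided Gronwall-type estimate for $\eta$ in terms of $w$), one obtains a closed differential inequality for the pair $(w,\eta)$ near $s=1$ that yields $w(s)>0$ and controls $\eta$ on a left-neighbourhood of $1$. The cleanest route is probably to first establish the strict inequality $w<0$ "to the left of $1$" infinitesimally — i.e. $w_s(1^-)<0$, since $\eta(1)=0$ kills the bad term there — and then run a continuity/contradiction argument: if $w$ vanished again at some first $s_*<1$ (coming from $s=1$), then at $s_*$ we would have $\tilde u(s_*,R_2)=\tilde u(s_*,R_1)$ and $w_s(s_*)\ge 0$; but rewriting $w_s(s_*)$ using $\eta(s_*)$ bounded via the $E$-comparison and the strictly negative $\tilde\psi$-gap forces $w_s(s_*)<0$, a contradiction. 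This gives \eqref{LLL30} on the whole common interval.

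**Monotonicity of $\tilde\tau$.** Once \eqref{LLL30} is known, recall that $\tilde\tau(R)>0$ exactly when the backward solution reaches a point where $\tilde u$ (equivalently $I(r,R)=r^2 u$) first hits $0$, by Lemmas~\ref{lem:207} and \ref{lem:209b}. Suppose $0<R_1<R_2$ and $\tilde\tau(R_2)>0$; I must show $\tilde\tau(R_1)<\tilde\tau(R_2)$ (in particular $\tilde\tau(R_1)$ could be $0$, which is fine). At $s=\tilde\tau(R_2)$ we have $\tilde u(\tilde\tau(R_2),R_2)=0$, and since $\tilde u(\cdot,R_2)<0$ on $(\tilde\tau(R_2),1)$ while \eqref{LLL30} gives $\tilde u(\tilde\tau(R_2),R_1)<\tilde u(\tilde\tau(R_2),R_2)=0$, the solution for $R_1$ is still strictly negative there, hence has not yet terminated: $\tilde\tau(R_1)<\tilde\tau(R_2)$. (One must check the comparison \eqref{LLL30} is valid \emph{up to} $s=\tilde\tau(R_2)$, which follows since on $(\tilde\tau(R_2),1)$ both solutions exist and the argument of the previous paragraph applies, then take the limit $s\downarrow\tilde\tau(R_2)$ using the continuity of $\tilde u$ established via Lemma~\ref{lem:206} and Lemma~\ref{lem:209c}.)

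**Main obstacle.** The delicate point is propagating the strict inequality $w<0$ from the single point $s=1$ — where $\eta(1)=0$ makes the sign clear — to the entire interval, because the $E$-coupling term $s^2\tilde\psi(s,R_1)\bigl(\mu(\tilde E(s,R_2))-\mu(\tilde E(s,R_1))\bigr)$ has an a priori indefinite sign and $\tilde u$ appears in the denominator of the $E$-equation and degenerates to $0$ at the left endpoint $\tilde\tau$. Handling the degeneracy at $s=\tilde\tau$ requires the $C^1$-up-to-$\tilde\tau$ regularity from Lemmas~\ref{lem:206}–\ref{lem:207} (which in turn rests on the structural hypothesis \eqref{sss03Q}), and the sign bookkeeping in the combined $(w,\eta)$ differential inequality is where the bulk of the careful work lies; everything else is a routine continuity/contradiction wrap-up.
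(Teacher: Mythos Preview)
Your overall continuation/comparison strategy is right, and the monotonicity of $R\mapsto\tilde\psi(s,R)$ is indeed the engine. But two concrete steps do not go through as stated. First, the initialization ``$w_s(1^-)<0$ because $\eta(1)=0$ kills the bad term'' fails: since $\sigma(R,R)\equiv1$, one also has $\tilde\psi(1,R_1)=\tilde\psi(1,R_2)=1-\bar\sigma$, so \emph{both} pieces of your decomposition vanish and in fact $w_s(1)=0$. The paper resolves this by computing $\tilde E_s(1,R)$ explicitly (from \eqref{LLL05B}) and then $\partial_R\tilde E_s(1,R)<0$, which forces $\eta(s)>0$ for $0<1-s\ll1$; only after that does $w_s<0$ (hence $w>0$) follow near $s=1$.

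Second, and more seriously, your contradiction at a first zero $s_*$ of $w$ relies on showing $w_s(s_*)<0$ directly from
\[
w_s(s_*)=s_*^2\mu(\tilde E_2)\bigl(\tilde\psi_2-\tilde\psi_1\bigr)
        +s_*^2\,\tilde\psi_1\bigl(\mu(\tilde E_2)-\mu(\tilde E_1)\bigr).
\]
Even granting $\eta(s_*)\ge0$ (so $\mu(\tilde E_2)-\mu(\tilde E_1)\le0$), the second term has the \emph{wrong} sign once $\tilde\psi_1(s_*)<0$, and $\tilde\psi_1$ does change sign on $(0,1)$; no Gronwall bound on $\eta$ can repair this sign. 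The paper bypasses the issue entirely by the structural identity obtained by adding \eqref{LLL28E} and $\tilde E$ times \eqref{LLL28U}:
\[
(\tilde u\,\tilde E)_s=s^2\bigl(-\gamma m\,\tilde E+\phi(\tilde E)\bigr),
\]
in which the problematic $\tilde\psi$-term has cancelled. Integrating from $\bar s$ to $1$ and using that $E\mapsto -\gamma m E+\phi(E)$ is strictly decreasing (so the integrand difference has a sign on the interval where $\eta>0$) yields $\tilde u_1(\bar s)\,\eta(\bar s)>0$; since $\tilde u_1(\bar s)\le0$ and $\eta(\bar s)\ge0$, this is a contradiction, ruling out $w(\bar s)=0$. (Ruling out $\eta(\bar s)=0$ is then done by a straightforward ODE comparison for $\tilde E$, using $Q_\sigma<0$, $\tilde\psi_2<\tilde\psi_1$, and $\tilde u_1<\tilde u_2<0$.) That cancellation identity is the missing idea in your sketch; once you insert it, the rest of your outline (including the $\tilde\tau$-monotonicity paragraph) is essentially the paper's argument.
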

\begin{proof}
It is obvious that $\tilde{\psi}(1,R)$, $\tilde{E}(1,R)$, $\tilde{u}_{s}(1,R)$ are independent of $R$.
One can easily compute the derivatives of $\tilde{\psi}$ as follows
\begin{equation}\label{LLL26a}\begin{aligned}
\partial_{R}\tilde{\psi}(s,R)&=
\frac{\sqrt{\lambda}\cosh(\sqrt{\lambda}R)\cosh(s\sqrt{\lambda}R)}
{s[\sinh(\sqrt{\lambda}R)]^{2}}
[s\tanh(\sqrt{\lambda}R)-\tanh(s\sqrt{\lambda}R)]<0,\\
\partial_{s}\tilde{\psi}(1,R)&=\sqrt{\lambda}R\coth(\sqrt{\lambda}R)-1>0,\\
\partial_{sR}\tilde{\psi}(1,R)
&=\frac{\sinh(2\sqrt{\lambda}R)-2\sqrt{\lambda}R}{2\sinh^{2}(\sqrt{\lambda}R)}\sqrt{\lambda}>0,
\end{aligned}\end{equation}
and calculate the derivatives of $\tilde{u}$ and $\tilde{E}$ as follows
\begin{equation}\label{LLL26b}\begin{aligned}
\tilde{u}_{s}(1,R)&=\mu(\tilde{E})\tilde{\psi}|_{s=1}>0,\\
\tilde{E}_{s}(1,R)&=
-\frac{\tilde{E}\mu(\tilde{E})\tilde{\psi}}{\mu(\tilde{E})\tilde{\psi}-Q_{E}}
\bigg|_{s=1}\partial_{s}\tilde{\psi}(1, R)<0,\\
\tilde{E}_{sR}(1, R)&= -\frac{\tilde{E}\mu(\tilde{E})\tilde{\psi}}
{\mu(\tilde{E})\tilde{\psi}-Q_{E}}\bigg|_{s=1}\partial_{sR}\tilde{\psi}(1. R)<0.
\end{aligned}\end{equation}

Set $\tilde{E}_{i}(s)=\tilde{E}(s,R_{i})$, $\tilde{\psi}_{i}(s)=\tilde{\psi}(s,R_{i})$ , $i=1,2$.
By \eqref{LLL26a} and \eqref{LLL26b}, we get
$$\tilde{E}_{2}-\tilde{E}_{1}=0,\hspace{2ex} [\tilde{E}_{2}-\tilde{E}_{1}]_{s}<0\mbox{ at }s=1$$
and hence $[\tilde{E}_{2}-\tilde{E}_{1}](s)>0$ for $0<1-s\ll1$.
Combining this with the fact that $0<\tilde{\psi}_{2}<\tilde{\psi}_{1}$, $0<1-s\ll1$
and that $\mu(E)$ is a decreasing positive function, we see that
$$\begin{aligned}
  \left[\tilde{u}_{2}-\tilde{u}_{1}\right]_{s}(s)
  =&s^{2}[\mu(\tilde{E}_{2}(s))\tilde{\psi}_{2}(s)-\mu(\tilde{E}_{1}(s))\tilde{\psi}_{1}(s)],\\
  =&s^{2}[\mu(\tilde{E}_{2}(s))-\mu(\tilde{E}_{1}(s))]\tilde{\psi}_{2}(s) + s^{2}\mu(\tilde{E}_{1}(s))[\tilde{\psi}_{2}(s)-\tilde{\psi}_{1}(s)]\\
  <0
\end{aligned}$$
and then $[\tilde{u}_{2}-\tilde{u}_{1}](s)>0$ for $0<1-s\ll1$.
Therefore,
\begin{equation}\label{LLL29} [\tilde{E}_{2}-\tilde{E}_{1}](s)>0, [\tilde{u}_{2}-\tilde{u}_{1}](s)>0\end{equation}
for $0<1-s\ll1$ and then extended to a maximal interval $(\bar{s},1)$.

We claim that $[\tilde{u}_{2}-\tilde{u}_{1}](\bar{s})>0$. Otherwise, $\tilde{u}_{1}(\bar{s})=\tilde{u}_{2}(\bar{s})\leq0$.
From \eqref{LLL28}, we obtain $(\tilde{u}\tilde{E})_{s}=(-\gamma m\tilde{E}+\phi(E))s^{2}$ and
$$\tilde{u}(s)\tilde{E}(s)=-\int_{s}^{1}[-\gamma m\tilde{E}(\xi)+\phi(\tilde{E}(\xi))]\xi^{2}d\xi.$$
Thus, by using $\phi'\leq0$, we derive
$$\begin{aligned}
0\geq&\tilde{u}_{1}(\bar{s})[\tilde{E}_{2}-\tilde{E}_{1}](\bar{s})\\
=&-\int_{\bar{s}}^{1}\{[-\gamma m\tilde{E_{2}}(\xi)+\phi(\tilde{E}_{2}(\xi))]-[-\gamma m\tilde{E}_{1}(\xi)+\phi(\tilde{E}_{1}(\xi))]\}\xi^{2}d\xi\\
>&0.
\end{aligned}$$
This yields a contradiction. In particular, $\tilde{u}_{1}(\bar{s})<\tilde{u}_{2}(\bar{s})\leq0$.

We next claim that $[\tilde{E}_{2}-\tilde{E}_{1}](\bar{s})>0$. Indeed, on the interval $(\bar{s},1)$ we have
$$\begin{aligned}
\partial_{s}\tilde{E}_{2}(s)=\frac{Q(\tilde{\psi}_{2},m,\tilde{E}_{2})s^{2}}{\tilde{u}_{2}}
<\frac{Q(\tilde{\psi}_{2},m,\tilde{E}_{2})s^{2}}{\tilde{u}_{1}}
<\frac{Q(\tilde{\psi}_{1},m,\tilde{E}_{2})s^{2}}{\tilde{u}_{1}}.
\end{aligned}$$
It immediately follows from comparison principle of ordinary differential equation that $\tilde{E}_{2}(\bar{s})>\tilde{E}_{1}(\bar{s})$.

From the definition of $\bar{s}$ and the two assertions above, we conclude that $\bar{s}=\max\{\tilde{\tau}(R_{2}),\tilde{\tau}(R_{1})\}$ and \eqref{LLL30} hold. In particular, if $\tilde{\tau}(R_{1})>0$, then by \autoref{lem:207}, $\tilde{u}(\tilde{\tau}(R_{1}),R_{1})=0$. Since $\tilde{u}(s,R_{2})<0$ for $s\in(\tilde{\tau}(R_{2}),R_{2})$, we get $\tilde{\tau}(R_{2})>\tilde{\tau}(R_{1})>0$ by using \eqref{LLL30}.
Thus, we finish the proof.
\end{proof}

As a directly consequence of \autoref{lem:211}, we have
\begin{thm}[Uniqueness]
There exists exactly one $\mathcal{R}\in(0,\infty)$ such that $\tau(\mathcal{R})=0$, $I(0,\mathcal{R})=0$, i.e., \eqref{LLL12} admit exactly one solution $(\mathcal{E},\mathcal{U},\mathcal{R})$.
\end{thm}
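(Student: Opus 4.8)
The plan is to deduce uniqueness directly from the monotonicity established in \autoref{lem:211}. Recall that a stationary solution of \eqref{sss02} corresponds, via \autoref{lem:210} and \autoref{lem:207}, to a radius $R$ at which the maximal existence endpoint satisfies $\tau(R)=0$ together with $I(0,R)=0$. Existence of at least one such $R$ is \autoref{thm:211}. So it suffices to show that the set $\{R>0:\tau(R)=0,\ I(0,R)=0\}$ is a singleton. First I would recast the two conditions $\tau(R)=0$ and $I(0,R)=0$ in the rescaled variables of \autoref{lem:211}: since $\tilde\tau(R)=\tau(R)/R$, the condition $\tau(R)=0$ is equivalent to $\tilde\tau(R)=0$, and $I(0,R)=0$ is equivalent to $\tilde u(0,R)=0$ (indeed $\tilde u(s,R)=r^2u(r,R)/R^3=I(r,R)/R^3$ with $s=r/R$, so $\tilde u(0,R)=I(0,R)/R^3$). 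Hence a stationary solution corresponds exactly to an $R$ with $\tilde\tau(R)=0$ and $\tilde u(0,R)=0$.

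Next I would argue by contradiction. Suppose $R_1<R_2$ are both such radii. By \autoref{lem:208a} and \autoref{lem:208b} the set $\{R:\tau(R)>0\}$ is nonempty with infimum $R^*>0$, and by \autoref{lem:209b} $\tau$ is continuous; so below $R^*$ we have $\tau\equiv 0$, while for $R$ slightly above $R^*$ we have $\tau(R)>0$. The key point is that for any $R$ with $\tau(R)=0$, \autoref{lem:207}(ii) forces either $I(0,R)=0$ or $I(0,R)<0$ (the endpoint value $I(0,R)=\lim_{r\to 0+}I(r,R)$ exists and is $\le 0$ by the definition of $\tau$). Now apply \autoref{lem:211} with these two radii: since $\tilde\tau(R_1)=\tilde\tau(R_2)=0$, inequality \eqref{LLL30} gives $\tilde u(0,R_2)>\tilde u(0,R_1)$. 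But both are zero by assumption, a contradiction. This already shows that there is at most one $R$ with $\tau(R)=0$ and $I(0,R)=0$; combined with \autoref{thm:211} it yields existence and uniqueness of $\mathcal R$, and \autoref{lem:207}(ii) then furnishes the corresponding unique $(\mathcal E,\mathcal U)$ in $C^1[0,\mathcal R]$, solving \eqref{LLL12}.

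I would then add a short paragraph reconciling this with the structure already in the paper: \autoref{lem:210} defines $R^*=\inf\{R:\tau(R)>0\}$ and shows $\tau(R^*)=0$, $I(0,R^*)=0$, so one may simply take $\mathcal R=R^*$; the argument above shows no other value works. One subtlety to handle carefully: \autoref{lem:211}'s conclusion \eqref{LLL30} is stated for $s\ge\max\{\tilde\tau(R_1),\tilde\tau(R_2)\}$, so when both $\tilde\tau$ vanish the inequality is valid all the way down to $s=0$, and we must make sure $\tilde u(\cdot,R_i)$ is continuous up to $s=0$ in that case — this is exactly \autoref{lem:207}(ii) (via \autoref{lem:206}, $E\in C[\tau,R]$ hence $I\in C^1[\tau,R]$, so $\tilde u$ extends continuously to $s=0$).

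The main obstacle is not the contradiction itself, which is immediate once \autoref{lem:211} is in hand, but ensuring that the limiting value $\tilde u(0,R)$ is attained continuously and that \autoref{lem:211} legitimately applies at the common endpoint $s=0$ when $\tilde\tau(R_1)=\tilde\tau(R_2)=0$; this requires invoking the continuity statements of Lemmas \ref{lem:206}, \ref{lem:207}, and \ref{lem:209c} rather than any new estimate. Everything else is bookkeeping.
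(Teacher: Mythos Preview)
Your proof is correct and follows essentially the same approach as the paper: both argue by contradiction, suppose two radii $R_1<R_2$ satisfy $\tau(R_i)=0$ and $I(0,R_i)=0$, translate these to $\tilde\tau(R_i)=0$ and $\tilde u(0,R_i)=0$, and then invoke \autoref{lem:211} at $s=0$ to obtain the impossible strict inequality $0=\tilde u(0,R_2)>\tilde u(0,R_1)=0$. Your additional remarks about continuity at $s=0$ and the relation to $R^*$ are accurate but not strictly needed, since the range $1>s\ge\max\{\tilde\tau(R_1),\tilde\tau(R_2)\}$ in \eqref{LLL30} already includes $s=0$ when both $\tilde\tau(R_i)$ vanish.
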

\begin{proof}
  If there are two constants $R_{2}>R_{1}>0$ such that
  $$\tau(R_{i})=0, I(\tau(R_{i}),R_{i})=0, i=1,2.$$
  Then $\tilde{\tau}(R_{i})=0$ and $\tilde{u}(\tilde{\tau}(R_{i}),R_{i})=0$. This contradicts \autoref{lem:211} and hence the uniqueness follows.
\end{proof}

\section{Numerical results and discussion}
In this section, we investigate numerically the asymptotic stability  of the system in the one-dimensional case.  Note that when ECM is a constant, the stationary solution is stable for small $\mu$ and unstable for large $\mu$. The uniqueness of our stationary solution indicates that our solution shall be "close" to the solution (corresponding to the constant ECM), when $\mu(E)$ is ''close'' to a constant. Hence the biological implication is that our stationary solution should be stable when $\mu(E)$ is small and unstable when $\mu(E)$ is large. However, it is a big challenge for us  to confirm our conjecture using mathematical analysis, since our system is very complex. Therefore, we perform the corresponding numerical simulations to confirm our expectation. 

Firstly, we solve the radially symmetric stationary equation \eqref{sss02}, choose 
$$Q(E,r)=-\gamma\frac{\alpha}{\beta}E+\phi(E)-E\mu(E)(\frac{\sqrt{\lambda}R}{\sinh(\sqrt{\lambda}R)}(\frac{\sinh(\sqrt{\lambda}r)}{\sqrt{\lambda}r}-\tilde{\sigma}),$$
$\phi(E)=\mu_1(1-E), \mu(E)=\frac{\mu }{1+E},$ and $R_s=R$ to be determined.  Then, we solve the system \eqref{lirui01} and compare the long time behavior of $E,\sigma, m$ with the steady state of $E, \sigma, m$.

To describe the long time behavior of our model, we simulate the time up to $t=200,$ but for most of our simulations, the profile is already very close to steady state at $t=40$, so we show mainly the dynamics for $t\leq 40$.

Here our aim is to use the different initial conditions to confirm our prediction: when $\mu(E)$ is small, the stationary solution is stable, when $\mu(E)$ is big, the stationary solution is unstable. Therefore, we divided into three cases according to the the value of parameter $\mu$. For each case, we investigate the dynamics of the density of ECM and concentration of the nutrient $\sigma$, but neglect the impact of the density of MDE; as a matter of fact, we can prove that  $m(x,t)\rightarrow \frac{\alpha}{\beta} \hspace{0.5ex} \text{uniformaly as} \hspace{0.5ex} t\rightarrow\infty$. 

Note that  $m(x,t)$ satisfies
\begin{subequations}\label{m01}\begin{align}
&\frac{\partial m(x,t)}{\partial t} =D_{m}\triangle m(x,t)+\alpha-\beta m(x,t), \hspace{-8em}&\mbox{ in }\Omega(t),\\
&\frac{\partial m(x,t)}{\partial n}=0 \hspace{-9em}&\mbox{ in }\partial\Omega(t),\\
&m(x,0)=m_0(x) \hspace{-8em}&\mbox{ in }\Omega(t).
\end{align}
\end{subequations}
We can compare $m(r,t)$ with the solution $\hat{m}(t)$ of the ODE equation
$$\frac{\partial \hat{m}(t)}{\partial t} =\alpha-\beta \hat{m}(t),\hspace{2ex} \hat{m}(0)=m_0, $$
 since $\hat{m}$ satisfies the same system \eqref{m01} as $m(x,t)$ and also satisfies 
 $$\hat{m}\rightarrow \frac{\alpha}{\beta} \hspace{2ex} \text{as} \hspace{2ex} t\rightarrow\infty,$$
 we deduce that 
 $$m(x,t)\rightarrow \frac{\alpha}{\beta} \hspace{2ex} \text{uniformaly as} \hspace{2ex} t\rightarrow\infty, $$
 by a comparison theorem for parabolic equations \cite{AFP}.
Therefore, it sufficed to consider the special case $m=\frac{\alpha}{\beta}$ from now on.

\begin{table}[ht]
\caption{parameter ranges in the model}\centering 
\begin{center}
\begin{tabular}{ c c c c}
	\hline
	Parameters & Description &Range & Reference\\
	\hline 
	$c$  & Nutrient diffusion coefficient&$10^{-5}-10^{-3}$& \cite{MAGL, ZSJU}\\ 
	$\lambda$&proliferation of nutrient &0.05-2&\cite{MAGL}\\ 
	$D_{m}$ &MDE diffusion coefficient& $10^{-3}-10$&\cite{MAGL, ZSJU}\\
	$\mu$&mobility coefficient&0.9-1.45&\cite{MAGL, ZSJU}\\
	$\mu_1$&proliferation of ECM coefficient&0.15-2.5&\cite{MAGL}\\
	$\gamma$&Rate of degradation of ECM&1-20&\cite{MAGL, ZSJU}\\
	$\alpha$&Production of MDEs&0.01-5&\cite{ZSJU}\\
	$\beta$&Decay of MDE&$10^{-1}-10$&\cite{ZSJU}\\
	\hline  
\end{tabular}
\end{center}
\end{table}

\subsection{Case I: $\mu=0.5$}
\par\vspace{1ex}\par\noindent 

\begin{figure}[H]\centering
	\includegraphics[width=0.8\textwidth]{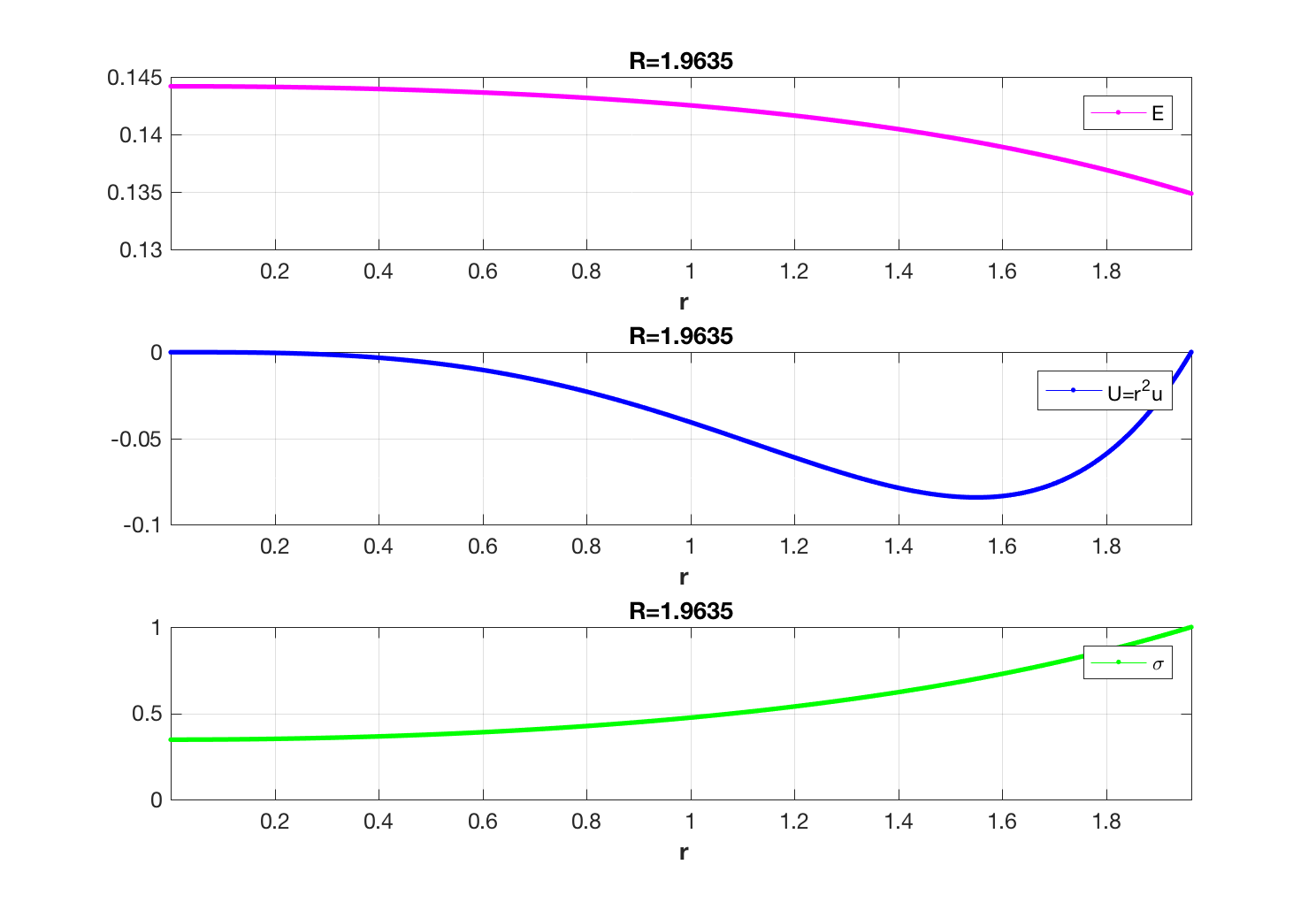}\\
	\caption{}\label{fig:01}
\end{figure}

Figure 1: One dimensional numerical results for case 1 when $\mu=0.5$. Results are snapshots of the stationary system for $E, u$ and $\sigma$ at $\gamma=10; \alpha=0.5; \beta=1; \lambda=2; \tilde{\sigma}=0.7; \mu_1=0.8;  R=1.9635; m=\frac{\alpha}{\beta}.$ The horizontal axis r indicates the spatial position, and the vertical axis indicates the density of ECM, the velocity or the concentration of nutrient listed in the legend.

 In order to show the profile of the evolution of the concentration of the nutrient $\sigma$, the density of ECM and MDE, firstly, we chose the steady state to be the initial conditions for all the system, which is a perfect case.
\begin{figure}[H]\centering
	\includegraphics[width=0.8\textwidth]{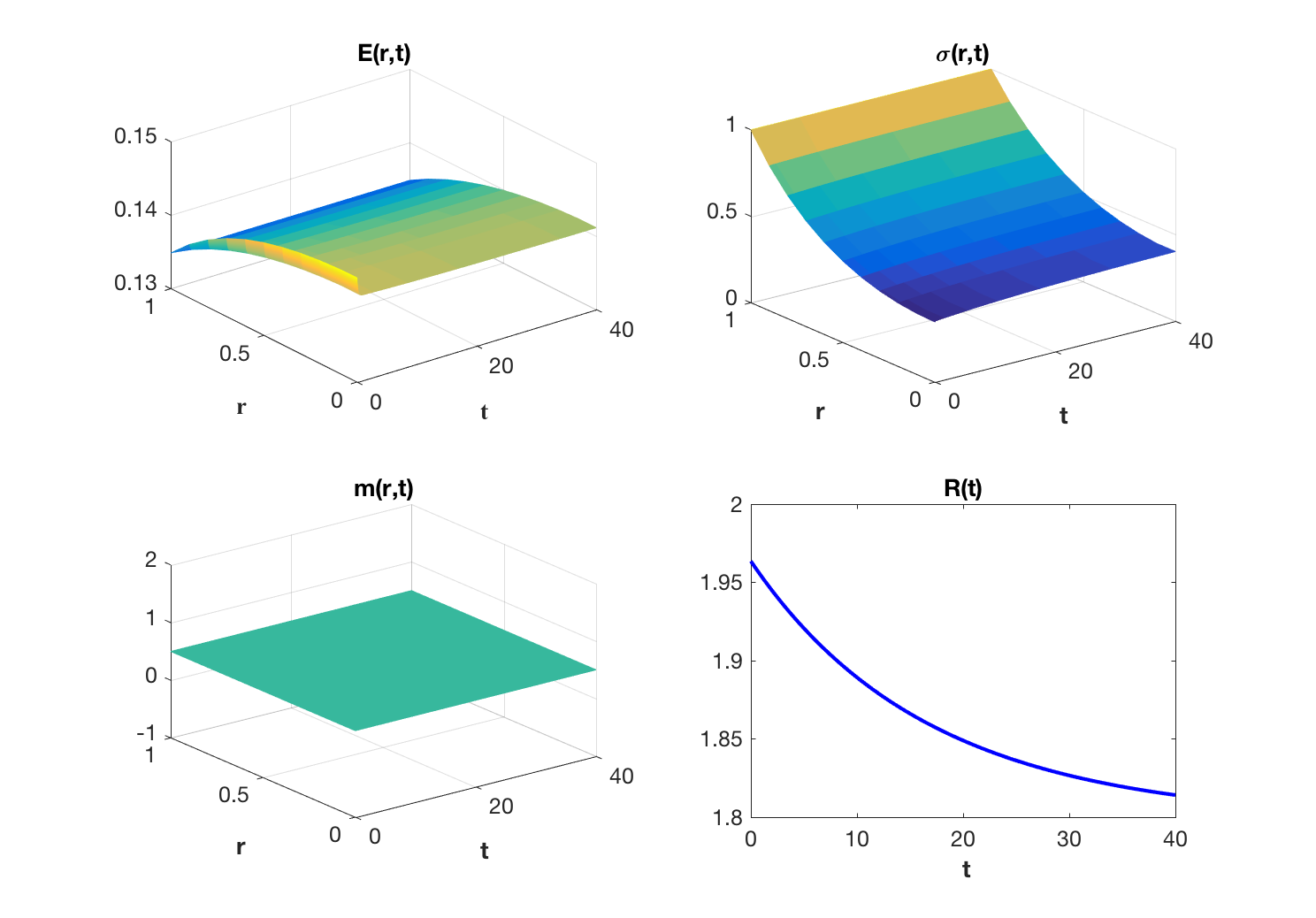}\\
	\caption{}\label{fig:0f}
\end{figure}

Next we consider the initial condition to be a small perturbation of the steady state.
\begin{figure}[H]\centering
	\includegraphics[width=0.8\textwidth]{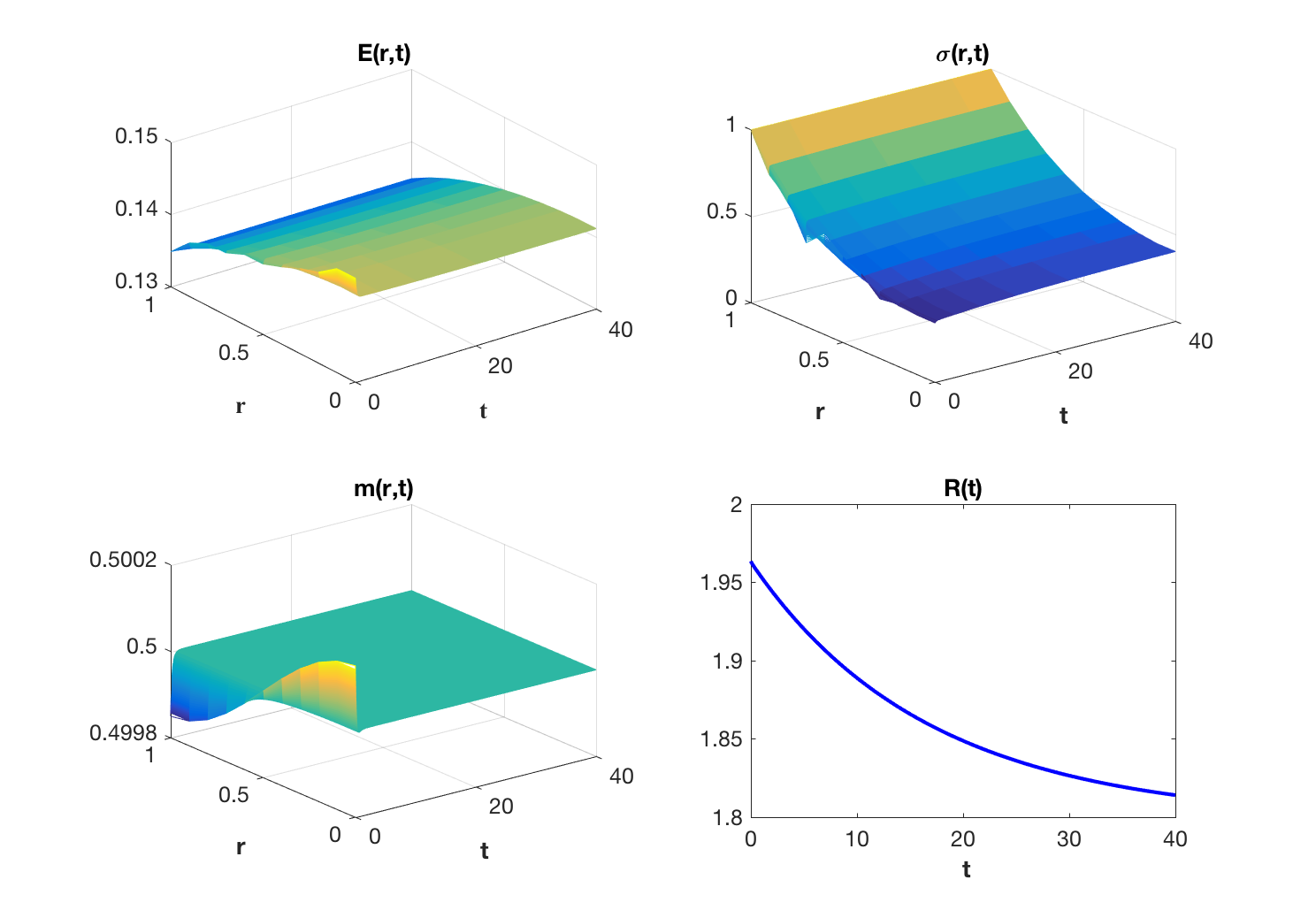}\\	
	\caption{}\label{fig:0e}
\end{figure}
 
Figure 3: In order to verify the stability of the steady state solution, we simulate a large number of initial conditions and observe the long time behavior. Under these conditions, our simulation shows, when $t\longrightarrow\infty$
$$\begin{aligned}
\sigma(r,t)\longrightarrow &\text{the steady state: } \hspace{0.5ex} \frac{\sinh{\sqrt{\lambda}r}}{r}\frac{R}{\sinh{\sqrt{\lambda}R}},\\
E(r,t)\longrightarrow &\text{the steady state: } \hspace{0.5ex} E_s,\\
m(r,t)\longrightarrow &\text{the steady state: } \hspace{0.5ex} \frac{\alpha}{\beta}.\end{aligned}$$

These plots are for the case $\mu=0.5$.

\subsection{Case II: $\mu=3.0$}
\par\vspace{1ex}\par\noindent 

\begin{figure}[H]\centering
	\includegraphics[width=0.8\textwidth]{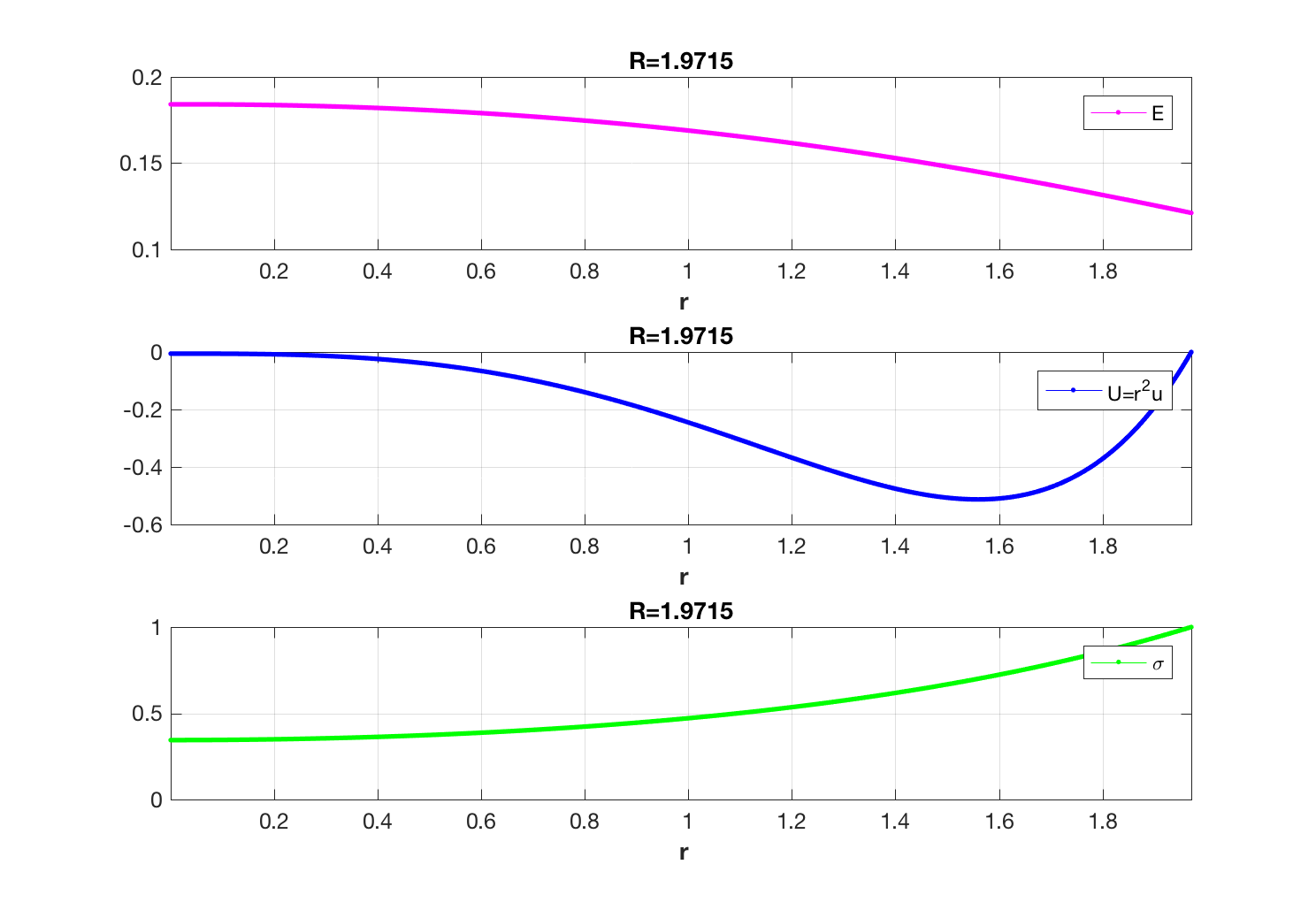}\\
	\caption{}\label{fig:01}
\end{figure}

Figure 4: One dimensional numerical results for case 2 when $\mu=3.0$. Results are snapshots of the stationary system for $E, u$ and $\sigma$ at $\gamma=10; \alpha=0.5; \beta=1; \lambda=2; \tilde{\sigma}=0.7; \mu_1=0.8;  R=1.9715; m=\frac{\alpha}{\beta}=0.5.$ The horizontal axis r indicates the spatial position, and the vertical axis indicates the density of ECM, the velocity or the concentration of nutrient listed in the legend.

\begin{figure}[H]\centering
	\includegraphics[width=0.8\textwidth]{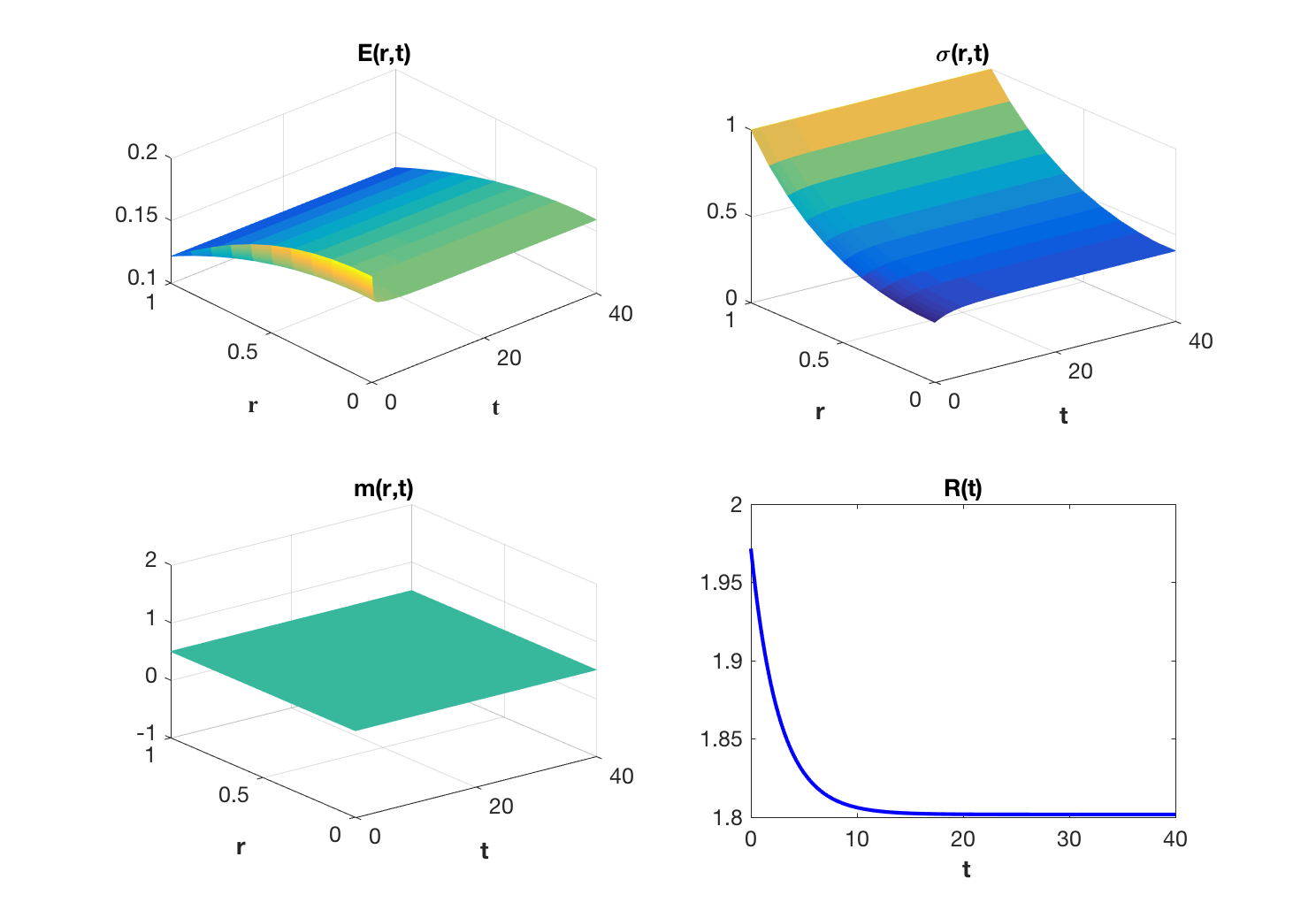}\\
	\caption{}\label{fig:0f}
\end{figure}

Figure 5: In order to show the profile of the evolution of the concentration of the nutrient $\sigma$, the density of ECM and MDE, firstly, we chose the steady state to be the initial conditions for all the system, which is a perfect case.

Next we consider the initial condition to be a small perturbation of the steady state.
\begin{figure}[H]\centering
	\includegraphics[width=0.8\textwidth]{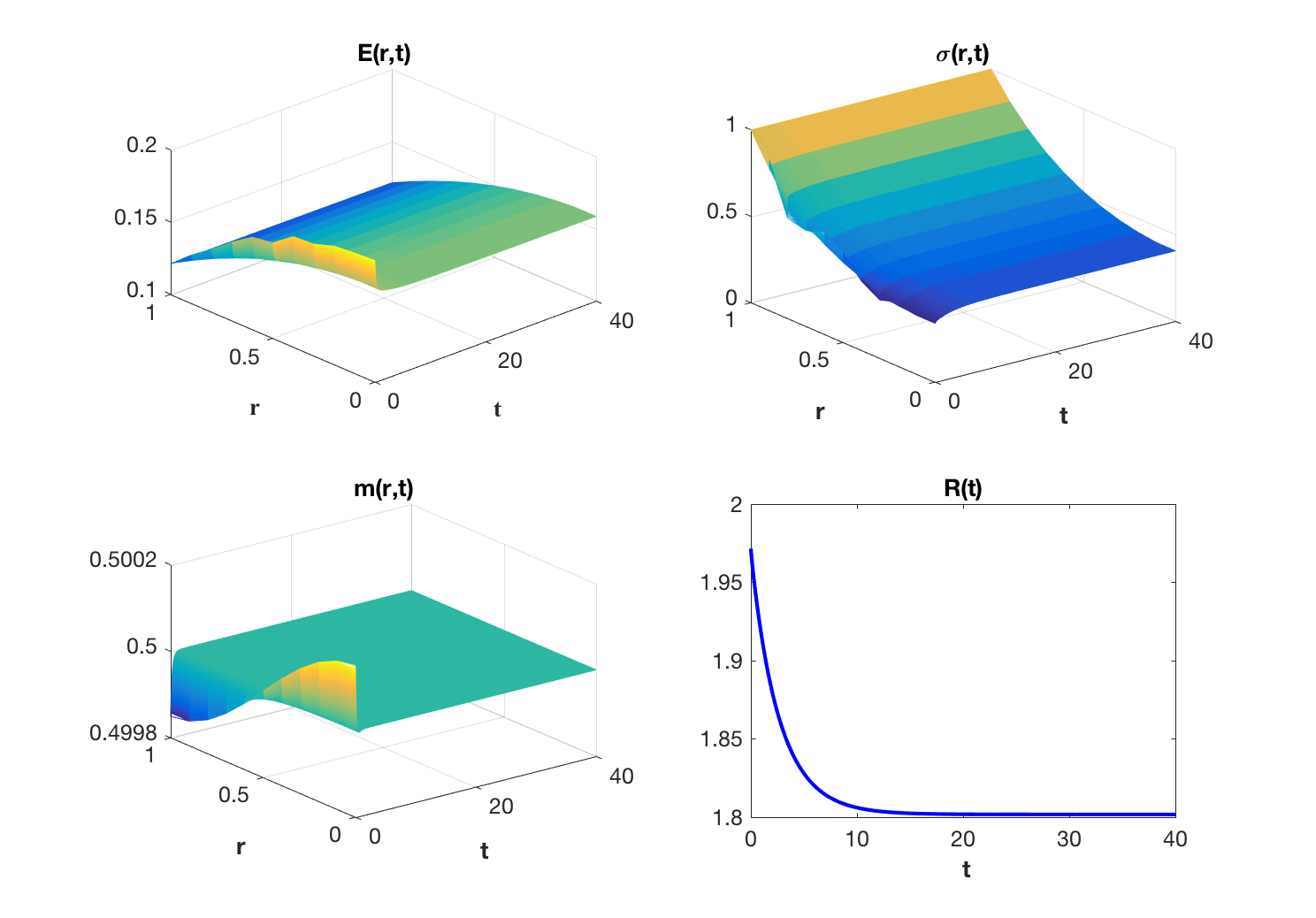}\\	
	\caption{}\label{fig:0e}
\end{figure}

Figure 6: In order to verify the stability of the steady state, we simulate a large number of initial conditions and observe the long time behavior. Under these conditions, our simulation shows, when $t\longrightarrow\infty$
$$\begin{aligned}
\sigma(r,t)\longrightarrow &\text{the steady state: } \hspace{0.5ex} \frac{\sinh{\sqrt{\lambda}r}}{r}\frac{R}{\sinh{\sqrt{\lambda}R}},\\
E(r,t)\longrightarrow &\text{the steady state: } \hspace{0.5ex} E_s,\\
m(r,t)\longrightarrow &\text{the steady state: } \hspace{0.5ex} \frac{\alpha}{\beta}.\end{aligned}$$

These plots are for the case $\mu=3.0$. 

\subsection{Case III: $\mu=10$}

\par\vspace{2ex}\par\noindent 

We now consider the parameter $\mu=10$, and obtain the critical $R\approx 1.99282$ by our simulation result. The plots indicate that the corresponding solution don't uniformly converge to the steady state. In order to show the profile of the evolution of the concentration of the nutrient $\sigma$, the density of ECM and MDE, we still consider the initial condition to be a small perturbation of the steady state. We found the steady state of $E$ is about to stay at $0.4-0.5$ when $\mu=10$, however, the $E(r,t)$ goes to a value range from $0.1-0.25$.
\begin{figure}[H]\centering
	\includegraphics[width=1.0\textwidth]{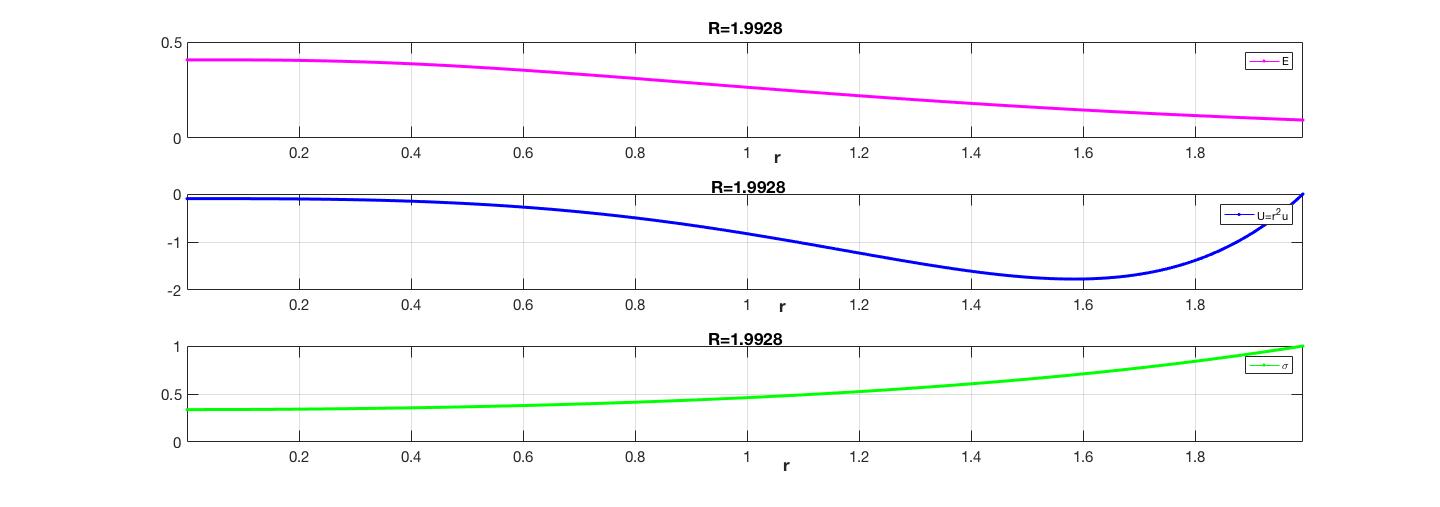}\\
	\caption{}\label{fig:0a}
\end{figure}
Figure 7: One dimensional numerical results for case 3 when $\mu=10.0$. Results are snapshots of the stationary system for $E, u$ and $\sigma$ at $\gamma=10; \alpha=0.5; \beta=1; \lambda=2; \tilde{\sigma}=0.7; \mu_1=0.8;  R=1.99282; m=\frac{\alpha}{\beta}=0.5.$ The horizontal axis r indicates the spatial position, and the vertical axis indicates the density of ECM, the velocity or the concentration of nutrient listed in the legend.

\par\vspace{2ex}\par\noindent \textbf {Appendix 1: Proof of \autoref{thm:S03A}}

\par\vspace{0ex}\par\noindent 

For simplicity, we may assume that $x_{0}=0$ and
\begin{equation}\label{eq:A02}
f(0,0)=0,\hspace{1em} g(0,0)=1,\hspace{1em}\theta=f_{x}(0,0)<1.
\end{equation}
To prove \autoref{thm:S03A}, we shall work on the solution for positive time $t>0$. The solution for negative time $t<0$ can be analyzed in exactly the same way.

\textbf{Step 1.} Existence.

In order to show the existence of solution of \eqref{eq:A1}, we shall approximate the singular equation with non-singular ones. The approximated solution $x_{\epsilon}$, $\epsilon>0$ is defined as follow:

\begin{equation}\label{eq:A04a}\begin{cases}
\displaystyle\frac{dx_{\epsilon}}{dt} = \frac{f(x_{\epsilon},t)}{\displaystyle\int_{0}^{\epsilon}g(0,\rho)d\rho+\int_{\epsilon}^{t} g(x_{\epsilon},s)ds},\hspace{2ex} r>\epsilon,\\
x_{\epsilon}(t)=0,\hspace{2ex}0\leq t\leq\epsilon.
\end{cases}\end{equation}
For $\epsilon$ sufficiently small, $\int_{0}^{\epsilon}g(0,\rho)d\rho>0$ and the denominator of the right hand side of $dx_{\epsilon}/dr$ does not vanish at $r=\epsilon$. For such an $\epsilon$, the equation \eqref{eq:A04a} does not have singularity. If one denotes that denominator by $y=y(t)$, then $x=x(t), y=y(t)$ satisfies ordinary differential equations. By the standard ODE theory, the solution $x_{\epsilon}$ of \eqref{eq:A04a} exists and is unique, and the maximal existence interval is denoted by $[0,T_{\epsilon})$ for $0<\epsilon\ll1$. Moreover, $x_{\epsilon}$ satisfies
\begin{equation}\label{eq:A04b}
x_{\epsilon}(t)=
\int_{0}^{t}\frac{f(x_{\epsilon}(\tau)),\tau)}{\int_{0}^{\tau}g(x_{\epsilon}(s),s)ds}d\tau,
t>\epsilon.
\end{equation}
 	
We first estimate the lower bound and the upper bound of $x_{\epsilon}$.

Let $\eta>0$ be fixed such that $2\eta<1-\theta$ and $\eta<1$. By the continuity, there exists a $\delta=\delta(\eta)>0$ such that
\begin{equation}\label{eq:A05}
|g(x,t)-1|<\eta,\hspace{0.5em} |f_{x}(x,t)-\theta|<\eta,\hspace{0.5em} |f(x,t)|<\eta
\end{equation}
for all $(x,t)\in\bar{D}_{\delta},$ where $\bar{D}_{\delta}=\{(x,t):|x|\leq\delta, |t|\leq\delta\}$. Let $M>0$ be such that
$$|f_{x}(x,t)|<M, |f_{t}(x,t)|<M\mbox{ for }(x,t)\in\bar{D}_{\delta}.$$
Now we take two positive constants $\kappa$ and $\mathcal{T}$ such that
\begin{equation}\label{eq:A06}
\kappa>\frac{M}{1-\theta-2\eta},\mathcal{T}=\min\{\delta,\frac{\delta}{\kappa}\}.
\end{equation}
We claim that $x_{\epsilon}$ exists for $t\in[0,\mathcal{T}]$ for every $\epsilon\in(0,\mathcal{T})$,
\begin{equation}\label{eq:A07a}
-\kappa t<x_{\epsilon}(t)<\kappa t,~~0\leq t\leq\mathcal{T},
\end{equation}
\begin{equation}\label{eq:A07b}
|x_{\epsilon}'(t)|\leq\frac{M\kappa+M}{1-\eta},\epsilon<t\leq\mathcal{T}.
\end{equation}
Indeed, we have for $0<t\leq\mathcal{T}$,
$$\begin{aligned}
\kappa(1-\eta)t-f(\kappa t,t)&>\kappa(1-\eta) t-(\theta+\eta)\kappa t - Mt\\
&= [(1-\theta-2\eta)\kappa-M] t> 0,\\
-\kappa(1-\eta)t-f(-\kappa t,t)&< -\kappa(1-\eta)t + (\theta+\eta)\kappa t + Mt\\
&= -[(1-\theta-2\eta)\kappa-M] t<0.
\end{aligned}$$
Thus, $\overline{x}(t)=\kappa t$ and $\underline{x}(t)=-\kappa t$ satisfy
$$\overline{x}'(t)>\frac{f(\overline{x}(t),t)}{(1-\eta)t},
~~\underline{x}'(t)<\frac{f(\underline{x}(t),t)}{(1-\eta)t},
~~0<t\leq\mathcal{T}.$$
Now we proceed to prove \eqref{eq:A07a}. If there is a first $\bar{t}\in(\epsilon,\mathcal{T}]\cap(\epsilon,T_{\epsilon})$ such that
$$x_{\epsilon}(t)<\overline{x}(t), t\in(\epsilon,\bar{t})\mbox{ and }x_{\epsilon}(\bar{t})=\overline{x}(\bar{t}),$$
then, by the definition of derivatives, $x_{\epsilon}'(\bar{t})\geq\overline{x}'(\bar{t})=\kappa>0$, and $f(x_{\epsilon}(\bar{t}),\bar{t})>0$ by the equation of $x_{\epsilon}$.
However, at $t=\bar{t}$,
$$x_{\epsilon}'(\bar{t})
=\frac{f(x_{\epsilon}(\bar{t}),\bar{t})}{\int_{0}^{\bar{t}}g(x(s),s)ds}
<\frac{f(\overline{x}(\bar{t}),\bar{t})}{(1-\eta)\bar{t}}
<\overline{x}'(\bar{t}).$$
This yields a contradiction. Thus, $x_{\epsilon}(t)<\overline{x}(t)$, $t\in(\epsilon,\mathcal{T}]\cap(\epsilon,T_{\epsilon})$. Similarly, we can prove $x_{\epsilon}>\underline{x}(t)$, $t\in(\epsilon,\mathcal{T}]\cap(\epsilon,T_{\epsilon})$. Since $T_{\epsilon}$ is the maximal existence time of $x_{\epsilon}$, we see $T_{\epsilon}>\mathcal{T}$. Therefore, \eqref{eq:A07a} and then \eqref{eq:A07b} are established.

Now we show the existence of solution of \eqref{eq:A1}. Indeed, from \eqref{eq:A07a} and \eqref{eq:A07b}, we see that $x_{\epsilon}$ is uniformly bounded in the Lipschitz function space $\mbox{Lip}[0,\mathcal{T}]$. By passing to a subsequence if necessary, we may assume that
$$x_{\epsilon}\rightarrow x\mbox{ in }C^{0}[0,\mathcal{T}]$$
as $\epsilon\downarrow0$ for some function $x\in\mbox{Lip}[0,\mathcal{T}]$. From \eqref{eq:A04b}, we obtain $x\in\mbox{Lip}[0,\mathcal{T}]$ satisfies
$$ x(t)=\int_{0}^{t}\frac{f(x(\tau),\tau)}{\int_{0}^{\tau}g(x(s),s)ds}d\tau. $$
Thus, $x\in\mbox{Lip}[0,\mathcal{T}]\cap C^{1}(0,\mathcal{T}]$ is a solution to \eqref{eq:A1}. Moreover, $x$ is of class $C^{1}[0,\mathcal{T}]$ and $x'(0)=k=\gamma/(1-\theta)$, $\gamma=f_{t}(0,0)$; this is an application of \autoref{prop:FH2008} below, which is a corrected version of Lemma 9.3 of \cite{FriedmanHu-08}.

\textbf{Step 2.} Uniqueness when $\theta\in(-1,1)$.

Assume that $x(t), y(t)$ are two $C^{1}$ solutions of \eqref{eq:A1} and they are well-defined at a common interval $[0,T]$. Set
\begin{equation}\label{eq:A11} L=\max\{\max_{t\in[0,T]}|x'(t)|, \max_{t\in[0,T]}|y'(t)|\}.\end{equation}
Let us fix $\eta\in(0,1)$ satisfying
\begin{equation}\label{eq:A12}\tilde{\theta}=\frac{|\theta| +\eta}{1-\eta}\in(0,1).\end{equation}
By the continuity, there exists a $\delta=\delta(\eta)>0$ such that \eqref{eq:A05} holds.
Additionally, let $M=M_{\delta}>0$ satisfy
\begin{equation}\label{eq:A13} |f_{x}(x,t)|<M, |f_{t}(x,t)|<M, |g(x,t)-g(y,t)|\leq M|x-y|\end{equation}
for all $(x,t)$ and $(y,t)$ in $\bar{D}_{\delta}$.
Thus, $z(t)=x(t)-y(t)$ satisfies
$$\frac{dz(t)}{dt} = \frac{f(x(t),t)-f(y(t),t)}{\int_{0}^{t}g(x(s),s)ds} - \frac{f(y(t),t)\int_{0}^{t}[g(x(s),s)-g(y(s),s)]ds} {\int_{0}^{t}g(x(s),s)ds\int_{0}^{t}g(y(s),s)ds},$$
it follows from \eqref{eq:A05} and \eqref{eq:A13} that
$$|z'(t)| \leq \frac{(|\theta|+\eta)|z(t)|}{(1-\eta)t}+\frac{M(L+1)t\cdot t M}{(1-\eta)^{2}t^{2}} \max_{s\in[0,t]}|z(s)|,$$
for $0<t\leq T_{1}=\min\{\delta,\frac{\delta}{L}\}$.
If we set $\tilde{M}=M^{2}(L+1)(1-\eta)^{-2}$ and $Z(t)=\max_{s\in[0,t]}|z(s)|$, then
\begin{equation}\label{eq:A14}
|z'(s)|\leq\frac{\tilde{\theta}|z(s)|}{s}+\tilde{M}Z(t),
\end{equation}
for all $s\in(0,t]$ and all $t\in(0,T_{1}]$.

We claim that
\begin{equation}\label{eq:A15}
0\leq |z(s)|\leq\frac{\tilde{M}s}{1-\tilde{\theta}}Z(t), s\in(0,t],t\in(0,T_{1}].
\end{equation}
Indeed, from \eqref{eq:A11} and \eqref{eq:A14}, one easily obtain by induction that
$$|z'(s)|\leq a_{k}, |z(s)|\leq a_{k} t, s\in[0,t].$$
Here $a_{0}=2L$ and $a_{k+1}=\tilde{\theta}a_{k}+\tilde{M}Z(t)$. \eqref{eq:A15} follows by the following fact
$$\lim_{k\to\infty}a_{k}=\frac{\tilde{M}}{1-\tilde{\theta}}Z(t).$$

Take $t=t^{*}\in(0,T_{1}]$ such that $\tilde{M}t^{*}<1-\tilde{\theta}$, then
$$0\leq Z(t)\leq\frac{\tilde{M}t^{*}}{1-\tilde{\theta}}Z(t), t\in[0,t^{*}].$$
It immediately follows that $Z(t^{*})=0$ and hence $z(t)=0$, $0\leq t\leq t^{*}.$ This shows the uniqueness in the case of $\theta\in(-1,1)$.

\textbf{Step 3.} Uniqueness when $\theta\in(-\infty,0)$.

Assume that $x(t), y(t)$ are two solutions of \eqref{eq:A1} and they are well-defined at a common interval $[0,T]$.
Then $z(t)=x(t)-y(t)$ satisfies
$$(z(t)t^{-\theta})'=t^{-\theta}[z'(t)-\frac{\theta z(t)}{t}]=:t^{-\theta} A(t),$$
where
$$\begin{aligned}
A(t)=&\left(\frac{f(x(t),t)}{\int_{0}^{t}g(x(s),s)ds}-\frac{\theta x(t)}{t}\right)-\left(\frac{f(y(t),t)}{\int_{0}^{t}g(y(s),s)ds}-\frac{\theta y(t)}{t}\right)\\
=&\frac{[f(x(t),t)-\theta x(t)]-[f(y(t),t)-\theta y(t)]}{\int_{0}^{t}g(x(s),s)ds}\\
&+\frac{\theta [x(t)-y(t)]\int_{0}^{t}[1-g(x(s),s)]ds}{t\int_{0}^{t}g(x(s),s)ds}\\
&-\frac{f(y(t),t)\int_{0}^{t}[g(x(s),s)-g(y(s),s)]ds}
{\int_{0}^{t}g(x(s),s)ds\int_{0}^{t}g(y(s),s)ds}.\\
\end{aligned}$$
Let $\delta_{0}\in(0,T)$ and $M_{0}>0$ such that
\begin{equation}\label{eq:A17a} |g(x,t)-g(y,t)|\leq M_{0}|x-y| \end{equation}
for all $(x,t)$ and $(y,t)$ in $\bar{D}_{\delta_{0}}$.
Let us fix $\eta>0$ and $\delta\in(0,\delta_{0})>0$ satisfying
\begin{equation}\label{eq:A17b}
\alpha\in(0,1)\mbox{ with } \alpha=\frac{1}{|\theta|}\left(\frac{\eta}{1-\eta}
+\frac{|\theta|\eta}{(1-\eta)^{2}}+\frac{M_{1}\eta}{(1-\eta)^{2}}\right)
\end{equation}
and that \eqref{eq:A05} holds.
Now let $t^{*}\in(0,\delta)$ such that
$$|x(t)|<\delta, |y(t)|<\delta, t\in[0,t^{*}].$$
Using \eqref{eq:A05}, \eqref{eq:A17a} and \eqref{eq:A17b}, we see that for $t\in(0,t^{*}]$,
$$\begin{aligned}
|A(t)|
\leq\frac{\eta|z(t)|}{(1-\eta)t}+\frac{|\theta||z(t)|\cdot t\eta}{t^{2}(1-\eta)}+\frac{\eta\cdot M_{1}Z(t)t}{(1-\eta)^{2}t^{2}}
\leq\alpha|\theta|\frac{Z(t)}{t},
\end{aligned}$$
where $Z(t)=\max_{s\in[0,t]}|z(s)|$ is a nondecreasing, nonnegative function. Thus,
$$|(z(t)t^{-\theta})'|\leq-\theta\alpha t^{-\theta-1}Z(t),\hspace{2ex} t\in(0,t^{*}].$$
Integrating the inequality above, we get
$$|z(t)t^{-\theta}|\leq -\theta\alpha Z(t)\int_{0}^{t}s^{-\theta-1}ds=\alpha Z(t)t^{-\theta}.$$
It immediately follows that $0\leq Z(t)\leq\alpha Z(t)$, $t\in[0,t^{*}]$. Hence $Z(t)=0$, $x(t)-y(t)=z(t)=0$, $t\in[0,t^{*}]$.
This shows the uniqueness in the case of $\theta\in(-\infty,0)$.

\par\vspace*{2ex}\par\noindent\textbf{Appendix 2: Proof of \autoref{thm:S03B}: }

\par\vspace{0ex}\par\noindent 

Without loss of generality, we may assume that
$$g(\varphi(\mu),0,\mu)\equiv1\mbox{ and }f_{x}(\varphi(\mu),0,\mu)=\theta(\mu).$$

\textbf{Step 1.} We prove that there exists two constants $\mathcal{T}>0$ and $\epsilon>0$ such that

(i) For every $\mu\in B_{\mu_{0},\epsilon}$, the solution $x=x(t,\mu)$ of \eqref{eq:A2} exists in $[0,\mathcal{T}]$;

(ii) The function $(t,\mu)\mapsto x(t,\mu)$ is continuous in $[0,\mathcal{T}]\times B_{\mu_{0},\epsilon}$.

Since $\theta_{0}=\theta(0)<1$, $\theta(\mu)$ is continuous for $\mu$ in a neighborhood of $\mu_{0}$, there exists a $\delta_{1}$ such that $\theta(\mu)<1$ for $|\mu-\mu_{0}|\leq\delta_{1}$.
Let us fix $\eta\in(0,1)$ satisfying
$$\inf\{1-\theta(\mu)-2\eta:\mu\in B_{\mu_{0},\delta_{1}}\}>0.$$
For such an $\eta$, there exists a $\delta_{2}\in(0,\delta_{1})$ and $M>0$ such that
$$|f_{x}(x,t,\mu)-\theta(\mu)|<\eta,\hspace{0.5em}|g(x,t,\mu)-1|<\eta, $$
$$|f_{x}(x,t,\mu)|<M,\hspace{0.5em}|f_{x}(x,t,\mu)|<M $$
for all $(x,t,\mu)$ satisfying $|x-\varphi(\mu)|<\delta_{2}, |t|<\delta_{2},\mu\in B_{\mu_{0},\delta_{2}}$.
Now we fix $\kappa$ and $\mathcal{T}>0$ such that
$$\kappa>\max\{\frac{M}{1-\theta(\mu)-2\eta}:\mu\in B_{\mu_{0},\delta_{2}}\}\mbox{ and } \mathcal{T}=\min\{\delta_{2},\frac{\delta_{2}}{\kappa}\}.$$
From \autoref{thm:S03A}, the solution $x(t,\mu)$ of \eqref{eq:A2} exists and is unique,
the (right) maximal existence interval is denoted by $[0,T_{\mu})$.
From the proof of the existence results (see step 1 in the proof of \autoref{thm:S03A}), we see that
$$T_{\mu}>\mathcal{T},\hspace{0.5em}|x(t,\mu)-\varphi(\mu)|<\kappa t,\hspace{0.5em}
\left|\frac{\partial x(t,\mu)}{\partial t}\right|\leq\frac{M\kappa + M}{1-\eta}$$
for all $t\in[0,\mathcal{T}]$ and $\mu\in B_{\mu_{0},\delta_{2}}$.
Thus $\{x(\cdot,\mu):\mu\in B_{\mu_{0},\delta_{2}}\}$ is a bounded in $C^{1}[0,\mathcal{T}]$. By the Ascoli-Arzel\'{a} theorem, $\{x(\cdot,\mu):\mu\in B_{\mu_{0},\delta_{2}}\}$ is a pre-compact subset in $C[0,\mathcal{T}]$.
Then for any $\bar{\mu}\in B_{\mu_{0},\delta_{2}}$ and convergent sequence $\{\mu_{k}\}$ with limit $\bar{\mu}$, there exists a further subsequence $\{\mu_{k_{l}}\}$ such that the corresponding subsequence of function $\{x(\cdot,\mu_{k_{l}})\}$ is a convergent subsequence in $C[0,\mathcal{T}]$, the limit is denoted $\bar{x}$, which belongs to $\mbox{Lip}[0,\mathcal{T}]$. Note that the solution $x(t,\mu)$ satisfies
\begin{equation}\label{eq:A22}
x(t,\mu) = \varphi(\mu)+\int_{0}^{t}\frac{f(x(t,\mu)),t,\mu)}{\int_{0}^{t}g(x(s,\mu),s,\mu)ds},t>0.
\end{equation}
We know the limit function $\bar{x}\in\mbox{Lip}[0,\mathcal{T}]$ also satisfies \eqref{eq:A22} with $\mu=\bar{\mu}$. Hence $\bar{x}$ is the unique solution of \eqref{eq:A2} with $\mu=\bar{\mu}$, the limit function $\bar{x}=x(\cdot,\bar{\mu})$ depends only on $\bar{\mu}$, which is independent of the choice of subsequence of $\{\mu_{k}\}$. Thus, the full sequence $\{x(\cdot,\mu_{k})\}$ converges to $x(\cdot,\bar{\mu})$ in $C[0,\mathcal{T}]$. Therefore, $(t,\mu)\mapsto x(t,\mu)$ is continuous in $[0,\mathcal{T}]\times B_{\mu_{0},\delta_{2}}$.

\textbf{Step 2.} We complete the proof.
We rewrite our equation in the form
\begin{equation}\label{eq:A23a}\begin{cases}
\frac{dx}{dt}=\frac{f(x,t,\mu)}{y},\\
\frac{dy}{dt}=g(x,t,\mu),\\
x(0)=0,y(0)=0.
\end{cases}\end{equation}
\autoref{thm:S03A} and step 1 suggest that the $C^{1}$ solution $(x(t,\mu),y(t,\mu))$ of \eqref{eq:A23a} exists and is unique in a time interval $[0,\mathcal{T}]$ for $\mu\in B_{\mu_{0},\delta_{2}}$, and the solution is continuous in $(t,\mu)$ for $t\in[0,\mathcal{T}]$, $\mu\in B_{\mu_{0},\delta_{2}}$, $y(t,\mu)>(1-\eta)t$, $(t,\mu)\in(0,\mathcal{T}]\times B_{\mu_{0},\delta_{2}}$. Moreover, when $\mu=\mu_{0}$, the solution $(x(t,\mu_{0}),y(t,\mu_{0}))$ exists in a bounded closed interval $[0,b]$.

In order to show the continuous dependence in $\mu$ beyond time $\tau$, we consider the following initial problem
\begin{equation}\label{eq:A23b}\begin{cases}
\frac{d\hat{x}}{dt}=\frac{f(\hat{x},t,\mu)}{y},\\
\frac{d\hat{y}}{dt}=g(\hat{x},t,\mu),\\
\hat{x}(\mathcal{T})=\xi,\hat{y}(\mathcal{T})=\zeta.
\end{cases}\end{equation}
Set $\xi_{0}=x(\mathcal{T},\mu_{0})$, $\zeta_{0}=y(\mathcal{T},\mu_{0})$. The solution of \eqref{eq:A23b} exists and is denoted by $\hat{x}=\hat{x}(t,\xi,\zeta,\mu),\hat{y}=\hat{y}(t,\xi,\zeta,\mu)$ for at least $(\xi,\zeta,\mu)$ in a neighborhood of $(\xi_{0},\zeta_{0},\mu_{0})$.
By the classical ODE theory, solutions of \eqref{eq:A23b} and \eqref{eq:A23a} are the same when $\xi=x(\mathcal{T},\mu)$, $\xi=y(\mathcal{T},\mu)$.
In particular, when $(\xi,\zeta,\mu)=(\xi_{0},\zeta_{0},\mu_{0})$, the solution $(\hat{x},\hat{y})$ exists in a bounded closed interval $[\mathcal{T},b]$.
From the classical ODE theory, there exists a $\delta_{3}\in(0,\delta_{2})$ such that
(i) the solution $\hat{x}=\hat{x}(t,\xi,\zeta,\mu),\hat{y}=\hat{y}(t,\xi,\zeta,\mu)$ of \eqref{eq:A23b} exist in $[\mathcal{T},b]$ for every $|\xi-\xi_{0}|<\delta_{3}$, $|\zeta-\zeta_{0}|<\delta_{3}$ and $\mu\in B_{\mu_{0},\delta_{3}}$;
(ii) $\hat{x}=\hat{x}(t,\xi,\zeta,\mu),\hat{y}=\hat{y}(t,\xi,\zeta,\mu)$ are continuous functions in $(t,\xi,\zeta,\mu)\in [\mathcal{T},b]\times [\xi_{0}-\delta_{3},\xi_{0}+\delta_{3}]\times[\zeta_{0}-\delta_{3},\zeta_{0}+\delta_{3}]\times B_{\mu_{0},\delta_{3}}$.
By step 1, we know there exists a $\delta_{4}\in(0,\delta_{3})$ such that
$$|x(\mathcal{T},\mu)-x(\mathcal{T},\mu_{0})|<\delta_{3}, |y(\mathcal{T},\mu)-y(\mathcal{T},\mu_{0})|<\delta_{3},\mu\in B_{\mu_{0},\delta_{4}}.$$
Thus, $x(t,\mu)=\hat{x}(t,x(\mathcal{T},\mu),y(\mathcal{T},\mu),\mu)$ and $y(t,\mu)=\hat{y}(t,x(\mathcal{T},\mu),y(\mathcal{T},\mu),\mu)$ exist in the time interval $[\mathcal{T},b]$ for every $\mu\in B_{\mu_{0},\delta_{4}}$, and these functions are continuous in $(t,\mu)$ for $t\in[\mathcal{T},b]$ and $\mu\in B_{\mu_{0},\delta_{4}}$.
Therefore, the solution $x(t,\mu)$ of \eqref{eq:A2} exists in $[0,b]$ for $\mu\in B_{\mu_{0},\delta_{4}}$, and $x(t,\mu)$ is continuous in $(t,\mu)\in[0,b]\times B_{\mu_{0},\delta_{4}}$.

Similarly, the solution $x(t,\mu)$ of \eqref{eq:A2} exists in $[a,0]$ for $\mu\in B_{\mu_{0},\delta_{5}}$, and $x(t,\mu)$ is continuous in $(t,\mu)\in[a,0]\times B_{\mu_{0},\delta_{5}}$ for some $\delta_{5}$.
\autoref{thm:S03B} is proved by taking $\delta=\min\{\delta_{4},\delta_{5}\}$.

\par\vspace{2ex}\par\noindent \textbf{Appendix 3: Fixing an error in \cite{FriedmanHu-08}}

\par\vspace{0ex}\par\noindent 

There is an important reference \cite{FriedmanHu-08}, which is also a singular equation but without the nonlocal integral term. However, we found an error in the proof of Lemma 9.3 in \cite{FriedmanHu-08} and correct it as follows:
\begin{lem}[\cite{FriedmanHu-08}]\label{lem:FH2008}
Let $\phi\in C[0,a]\cap C^{1}(0,a]$ be a solution of
$$v(r)\frac{d\phi}{dr}=F(\phi,r),$$
where $v\in C^{1}[0,a]$ and $F\in C^{1}$ in an open set containing the image $\{(\phi(r),r):0\leq r\leq a\}$ and satisfies $v(0)=0,F(\phi(0),0)=0$ and
\begin{equation}\label{eq:FH01}F_{\phi}(\phi(0),0)\neq v'(0),\hspace{2ex} v'(0)\neq0.\end{equation}
Then we have that $\phi\in C^{1}[0,a]$ and
\begin{equation}\label{eq:FH0002}
\lim_{r\downarrow0}\frac{d\phi}{dr}(r)
=\frac{d\phi}{dr}(0)=\frac{F_{r}(\phi(0),0)}{v'(0)-F_{\phi}(\phi(0),0)}.
\end{equation}
\end{lem}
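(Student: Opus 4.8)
I would localize near $r=0$ and argue by a comparison‑plus‑Gronwall scheme, as in Steps~1 and~3 of the proof of \autoref{thm:S03A}. \emph{Normalization:} replacing $\phi$ by $\phi-\phi(0)$ (equivalently $F(\phi,r)$ by $F(\phi+\phi(0),r)$) we may assume $\phi(0)=0$, hence $F(0,0)=0$; dividing $v$ and $F$ by the nonzero number $v'(0)$ does not change the equation, so we may also assume $v'(0)=1$. Set $\theta:=F_\phi(0,0)\neq1$, $\gamma:=F_r(0,0)$, $\ell:=\gamma/(1-\theta)$. Since $F,v\in C^1$, for $(\phi,r)$ near $(0,0)$ we may write $F(\phi,r)=\theta\phi+\gamma r+R(\phi,r)$ and $v(r)=r(1+\zeta(r))$ with $|R(\phi,r)|\le\omega(|\phi|+r)\,(|\phi|+r)$ and $\zeta(r)\to0$ as $r\to0$, $\omega$ a modulus of continuity vanishing at $0$; in particular $v(r)>0$ and $v(r)/r\to1$ for small $r>0$, and the equation reads $\phi'(r)=F(\phi(r),r)/v(r)$. \emph{Reduction:} it suffices to show $\lim_{r\downarrow0}\phi(r)/r$ exists. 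Indeed, if $\phi(r)/r\to m$ then $\phi(r)\to0$, hence $R(\phi(r),r)/r\to0$ by the bound on $R$, and $\phi'(r)=\bigl(\theta\,\phi(r)/r+\gamma+R(\phi(r),r)/r\bigr)/(1+\zeta(r))\to\theta m+\gamma$; integrating $\phi'$ over $[\varepsilon,r]$ and letting $\varepsilon\downarrow0$ (using $\phi(0)=0$ and that $\phi'$ is bounded near $0$) gives $\phi(r)=\int_0^r\phi'(s)\,ds$, so $\phi(r)/r\to\lim_{r\downarrow0}\phi'(r)=\theta m+\gamma$; therefore $m=\theta m+\gamma$, i.e. $m=\ell$, and $\phi\in C^1[0,a]$ with $\phi'(0)=\ell$, which is \eqref{eq:FH0002}.

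\textbf{A priori linear bound (the main obstacle).} The crux is to prove $|\phi(r)|\le\kappa r$ on $(0,r_0]$ for suitable $\kappa,r_0>0$. One route is to compare $\phi'=F(\phi,r)/v(r)$ with the linear barriers $\pm\kappa r$, exactly as the barriers $\pm\kappa t$ are used in Step~1 of the proof of \autoref{thm:S03A}: for $r_0$ small and $\kappa$ large (chosen in terms of $\theta,\gamma$), $\pm\kappa r$ is a super/sub‑solution on $(0,r_0]$, and the elementary ODE comparison principle traps $\phi$ between them. Equivalently one argues by contradiction: if $\limsup_{r\downarrow0}|\phi(r)|/r=\infty$, then on the set where $|\phi(r)|>Kr$ the linearization yields $\operatorname{sgn}\phi(r)\cdot\tfrac{d}{dr}\log|\phi(r)|$ bounded below (if $\theta>1$) or above (if $\theta<0$) by $\sim c(\theta)/r$ with $c(\theta)\neq0$; integrating this over a connected component of that set and comparing the endpoint values — or, if the component reaches $0$, invoking boundedness of $\phi\in C^0[0,a]$ — gives a contradiction. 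The sign hypothesis $F_\phi(\phi(0),0)\neq v'(0)$ is used precisely here. The borderline regime $0<\theta<1$ is genuinely different — there $\phi=\ell r+c\,r^{\theta}$ solves the linearized equation and lies in $C^0[0,a]\cap C^1(0,a]$ but is not $O(r)$ — so in that range the statement needs the extra hypothesis $\theta\notin(0,1)$ (or an a priori bound), which is exactly what is available in the applications (in \autoref{lem:207} one has $\theta>1$; in the proof of \autoref{thm:S03A} the relevant function is already Lipschitz near $0$).

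\textbf{From $O(r)$ to the limit.} Granting $|\phi(r)|\le\kappa r$, put $\psi(r)=\phi(r)/r$, now bounded on $(0,r_0]$. A direct computation gives the singular linear ODE $\psi'(r)=\dfrac{(\theta-1)\bigl(\psi(r)-\ell\bigr)+\mathcal E(r)}{r\,(1+\zeta(r))}$ with $|\mathcal E(r)|\le\hat\omega(r)$, where $\mathcal E$ absorbs $R(\phi(r),r)/r$ and the $\zeta$‑terms and is $o(1)$ as $r\downarrow0$ precisely because $|\psi|\le\kappa$. Writing $\chi=\psi-\ell$ this is $\chi'=\tfrac{\theta-1}{r}\chi+g$ with $|g(r)|\le\hat\omega(r)/r$; multiplying by the integrating factor $r^{-(\theta-1)}$ and integrating — from $r$ up to $r_0$ when $\theta>1$, and from $0$ up to $r$ (the integral of $s^{-\theta}$ converging at $0$) when $\theta<0$ — and bounding the boundary contribution at the singular end using $|\chi|\le\kappa+|\ell|$, one obtains $\limsup_{r\downarrow0}|\chi(r)|\le C(\theta)\sup_{(0,r_0]}\hat\omega$. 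Since the left side is independent of $r_0$ while $\sup_{(0,r_0]}\hat\omega\to0$ as $r_0\downarrow0$, it follows that $\chi(r)\to0$, i.e. $\phi(r)/r\to\ell$; by the first paragraph this gives $\phi\in C^1[0,a]$ and \eqref{eq:FH0002}. The single genuinely hard point is the a priori linear bound of the second step, and it is precisely there that the original argument for Lemma~9.3 in \cite{FriedmanHu-08} was deficient.
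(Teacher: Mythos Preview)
You correctly identify the essential point: the lemma as stated is false in the range $\theta\in[0,1)$ (note: $\theta=0$ is also bad --- the paper's Example~2 with $F=\phi^2$ has $F_\phi(0,0)=0$), and the conclusion only holds under the extra hypothesis that either $\theta\notin[0,1]$ or $\phi$ is a~priori Lipschitz at $0$. This matches exactly what the paper does: it gives the two counterexamples and then proves the corrected statement (\autoref{prop:FH2008}).

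Your route to the corrected version differs from the paper's. You proceed in two stages: first an a~priori bound $|\phi(r)|\le\kappa r$ via comparison/contradiction, then an integrating-factor argument on $\psi=\phi/r$. The paper instead writes the equation directly in the linear form $\phi'=\alpha_1(r)(\phi-\phi(0))+\alpha_2(r)$ with $\alpha_1(r)\sim\theta/r$, multiplies by $G(r)=\exp\bigl(\int_r^a\alpha_1\bigr)$, integrates, and applies L'H\^opital's rule to the quotient $\int_0^r\alpha_2 G\,/\,rG(r)$. The advantage of the paper's approach is that when $\theta<0$ the boundary term $(\phi(r)-\phi(0))G(r)$ vanishes automatically (since $G(r)\lesssim r^{|\theta|}\to0$ and $\phi$ is merely continuous), so no separate a~priori $O(r)$ bound is needed there; the Lipschitz hypothesis is invoked only in the genuinely borderline case $\theta\in[0,1)$. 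Your two-step argument is correct but does more work than necessary in the $\theta<0$ case.

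One small caveat on your a~priori step: the barrier comparison ``exactly as in Step~1 of the proof of \autoref{thm:S03A}'' is set up there for $\theta<1$, and the inequality $(1-\theta-2\eta)\kappa>M$ used to make $\pm\kappa t$ into barriers fails when $\theta>1$. Your alternative contradiction argument (integrating $\tfrac{d}{dr}\log|\phi|$ and using either $\phi\in C^0$ or the decay $|\phi|\sim r^\theta$) is the one that actually covers $\theta>1$; that is the argument you should foreground.
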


This is Lemma 9.3 in \cite{FriedmanHu-08}.

Consider the following equation
\begin{equation}\label{eq:FH02}\begin{cases}
 \frac{d\phi}{dr}=\frac{F(\phi,r)}{r}, r>0,\\
 \phi(0)=0,
\end{cases}\end{equation}
where $F$ is a smooth function with $F(0,0)=0$.

\emph{Example 1.} $F(\phi,r)=\alpha\phi$ with $\alpha\in(0,1)$.
Then all the continuous solution of \eqref{eq:FH02} are given by
$$\phi(r)=cr^{\alpha},r\geq0.$$

\emph{Example 2.} $F(\phi,r)=\phi^{2}$.
Then \eqref{eq:FH02} has exactly two kind of continuous solution: $\phi(r)\equiv0$ and
$$\phi(r)=\frac{1}{\ln(c/r)}, 0<r<c$$
These two examples satisfy the condition \eqref{eq:FH01}, while the continuous solutions of \eqref{eq:FH02} is not unique and is not differentiable at $r=0$ except the trial solution $\phi(r)\equiv0$.

Thus, there is an error in \autoref{lem:FH2008} when $F_{\phi}(\phi(0),0)/v'(0)\in[0,1)$, $v'(0)\neq0$.

\begin{prop}\label{prop:FH2008}
  Let the assumptions in \autoref{lem:FH2008} hold. Set $\gamma=F_{\phi}(\phi(0),0)/v'(0)$. Then

  (i) If $\gamma\not\in[0,1]$, then $\phi\in C^{1}[0,a]$ and \eqref{eq:FH0002} holds.

  (ii) If $\gamma\in[0,1)$ and $\phi\in Lip[0,a]$, then $\phi\in C^{1}[0,a]$ and \eqref{eq:FH0002} holds.
\end{prop}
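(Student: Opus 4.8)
The plan is to linearize the equation at $r=0$, reduce everything to a single scalar singular ODE, and then dispose of the two sign regimes of the ``index'' $c_0=\gamma-1$ by integrating‑factor estimates. Since $\phi\in C^1(0,a]$ is already given, only the behaviour at $r=0$ is at stake. Put $\phi_0=\phi(0)$, $\tilde\phi=\phi-\phi_0$, and write the increments of $v$, of $F(\phi_0,\cdot)$ and of $F(\cdot,r)$ as integrals of $v'$, $F_r$, $F_\phi$ (legitimate for $r$ near $0$, since $\phi$ is continuous and $F\in C^1$ near the graph). Using $v(0)=0$ and $F(\phi_0,0)=0$, the equation $v\phi'=F(\phi,r)$ becomes
\[
\tilde\phi'(r)=p(r)\,\frac{\tilde\phi(r)}{r}+q(r),\qquad 0<r\le a_0,
\]
where $p,q$ extend continuously to $[0,a_0]$ with $p(0)=F_\phi(\phi_0,0)/v'(0)=\gamma$ and $q(0)=F_r(\phi_0,0)/v'(0)=:q_0$. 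Set $L=q_0/(1-\gamma)$ (note $\gamma\ne1$ by \eqref{eq:FH01}). It suffices to show $g(r):=\tilde\phi(r)/r\to L$ as $r\downarrow0$: then $\tilde\phi'(0)=\lim\tilde\phi(r)/r=L$ and, from the equation, $\lim_{r\downarrow0}\tilde\phi'(r)=\gamma L+q_0=L$, giving $\phi\in C^1[0,a]$ and the value \eqref{eq:FH0002}. Moreover $h:=g-L$ satisfies $rh'(r)=c(r)h(r)+\rho(r)$ with $c(r)=p(r)-1\to\gamma-1=:c_0\ne0$ and $\rho(r)=(p(r)-1)L+q(r)\to(\gamma-1)L+q_0=0$, both continuous.

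\textbf{Case $\gamma>1$, i.e. $c_0>0$.} Fix $\mu_0\in(1,\gamma)$ and $r_0$ so small that $c(r)\ge\mu_0-1>0$ on $(0,r_0]$. The integrating factor $\Phi(r)=\exp\big(\int_r^{r_0}c(s)s^{-1}ds\big)$ satisfies $\Phi(r_0)=1$, $\Phi'=-c r^{-1}\Phi<0$, $\Phi(r)\to+\infty$. Integrating $(h\Phi)'=\Phi\rho/r$ and using $\Phi(s)/s=-\Phi'(s)/c(s)\le|\Phi'(s)|/(\mu_0-1)$ gives $|h(r)|\le|h(r_0)|/\Phi(r)+\sup_{(0,r_0]}|\rho|/(\mu_0-1)$; letting $r\downarrow0$ and then shrinking $r_0$ (so $\sup|\rho|\downarrow0$) forces $h\to0$. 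No hypothesis on $\phi$ beyond continuity is used here, which is exactly why case (i) needs no Lipschitz assumption when $\gamma>1$.

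\textbf{Cases $\gamma<0$ and $0\le\gamma<1$, i.e. $c_0<0$.} Here the homogeneous solutions of $rh'=ch$ blow up at $0$, so the first task is an a priori bound $\sup_{(0,r_0]}|g|<\infty$, after which the same integrating factor selects the bounded branch. In case (ii) this is immediate: $\phi\in\mathrm{Lip}[0,a]$ and $\tilde\phi(0)=0$ give $|\tilde\phi(r)|\le\mathrm{Lip}(\phi)\,r$. In case (i) with $\gamma<0$ one argues by comparison with $\pm Cr$ on an interval $(0,\varepsilon]$ where $p(r)\le\gamma/2<0$: a positive interior maximum of $\tilde\phi-Cr$ would occur at some $r^{*}$ with $\tilde\phi'(r^{*})=C>0$, while the equation forces $\tilde\phi'(r^{*})\le\tfrac{\gamma}{2}C+\sup|q|<0$ once $C$ is large, a contradiction (and symmetrically from below), so $|\tilde\phi(r)|\le Cr$. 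With $g$ (hence $h$) bounded, $\Phi(r)=\exp\big(\int_r^{r_0}c(s)s^{-1}ds\big)$ now tends to $0$; boundedness makes $h(r)\Phi(r)\to0$, so integrating $(h\Phi)'=\Phi\rho/r$ down from $r_0$ yields $h(r)=\Phi(r)^{-1}\int_0^r\Phi(s)\rho(s)s^{-1}ds$, and $\Phi(s)/s=-\Phi'(s)/c(s)\le2\Phi'(s)/|c_0|$ gives $|h(r)|\le2\sup_{(0,r_0]}|\rho|/|c_0|\to0$ as $r_0\downarrow0$. Hence $h\to0$ in all remaining cases.

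\textbf{Conclusion and main difficulty.} In every case covered by (i) or (ii) we get $g(r)\to L$, which by the reduction step yields $\phi\in C^1[0,a]$ with $\phi'(0)=F_r(\phi(0),0)/(v'(0)-F_\phi(\phi(0),0))$. The crux is the regime $c_0=\gamma-1<0$: since the homogeneous modes are unbounded there, one cannot read the limit off the integrating‑factor identity directly and must first pin down boundedness of $\tilde\phi/r$ — supplied by the Lipschitz hypothesis in (ii), and by a genuine comparison argument (using only $\phi\in C[0,a]$, $\phi(0)=\phi_0$) in case (i) with $\gamma<0$. A secondary obstacle is that $p,q$ are merely continuous with no rate of convergence to their limits, so power‑law ans\"atze are unavailable and the one‑sided estimates above are essential; Examples 1--2 (where $\gamma\in[0,1)$) show the Lipschitz hypothesis in (ii) cannot be dropped.
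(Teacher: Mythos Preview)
Your argument is correct. Both the paper and you linearize to a first–order singular ODE and use an integrating factor, but the executions differ in two respects worth recording.

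First, the endgame. The paper works with $\phi-\phi(0)$ and its integrating factor $G(r)=\exp\bigl(\int_r^a\alpha_1\bigr)$, integrates $(\,(\phi-\phi(0))G\,)'=\alpha_2 G$ from $0$ to $r$, and then computes $\lim_{r\downarrow0}(\phi(r)-\phi(0))/r$ by L'H\^opital. You instead pass to $g=\tilde\phi/r$, subtract the conjectured limit $L=q_0/(1-\gamma)$, and show $h=g-L\to0$ directly from the identity $rh'=c\,h+\rho$ with $\rho(0)=0$. This avoids L'H\^opital and gives a cleaner quantitative bound $|h|\lesssim\sup|\rho|$, at the cost of having to know $L$ in advance.

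Second, the case $\gamma<0$ without the Lipschitz hypothesis. The paper gets the boundary term $(\phi-\phi(0))G(r)\to0$ for free from mere continuity of $\phi$, because here $G(r)\le C_\epsilon r^\epsilon\to0$; no control on $\tilde\phi/r$ is needed. Your route through $h$ forces you to first establish $\tilde\phi/r$ bounded, which you do via a maximum--principle comparison with $\pm Cr$. That step is sound (the function $\tilde\phi-Cr$ is continuous on $[0,\varepsilon]$, vanishes at $0$, is nonpositive at $\varepsilon$ for $C$ large, so any positive value forces an interior maximum), but it is extra work the paper's formulation sidesteps. The payoff is that your $c_0<0$ argument then treats the subcases $\gamma<0$ and $\gamma\in[0,1)$ uniformly once boundedness of $g$ is in hand.

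For $\gamma>1$ the paper simply cites \cite{FriedmanHu-08}; your explicit estimate via $\Phi(r)\to\infty$ and $\Phi/s=|\Phi'|/c\le|\Phi'|/(\mu_0-1)$ is a clean self--contained replacement.
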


\begin{proof}
  As in the appendix in \cite{FriedmanHu-08}, we rewrite our equation in the linear form as follows
  $$\frac{d\phi}{dr}=\alpha_{1}(r)(\phi-\phi(0))+\alpha_{2}(r),$$
  where
  $$\alpha_{1}(r)
  =\frac{F(\phi(r),r)-F(\phi(0),r)}{(\phi(r)-\phi(0))v(r)}
  =\frac{F_{\phi}(\phi(0),0)}{v'(0)r}(1+o(1)),$$
  $$\alpha_{2}(r)
  =\frac{F(\phi(0),r)-F(\phi(0),0)}{v(r)}
  =\frac{F_{r}(\phi(0),0)}{v'(0)}(1+o(1)),$$
  we then have
  \begin{equation}\label{eq:FH05}
  \frac{d}{dr}\left\{(\phi(r)-\phi(0))G(r)\right\}=\alpha_{2}(r)G(r)
  \end{equation}
  with
  $$G(r)=\exp\left(\int_{r}^{a}\alpha_{1}(s)ds\right),
  \hspace{2ex} G'(r)=-\alpha_{1}(r)G(r).$$

  \emph{Case 1.} $F_{\phi}(\phi(0),0)/v'(0)>1, v'(0)\neq0$.
  This case is proved in the appendix of \cite{FriedmanHu-08}.

  \emph{Case 2.} $F_{\phi}(\phi(0),0)/v'(0)<0, v'(0)\neq0$. In this case, there exist $\epsilon>0$ and $r_{\epsilon}\in(0,a)$ such that $r\alpha_{1}(r)<-\epsilon$ for $0<r<r_{\epsilon}$. Then
  $$0<G(r)<C_{\epsilon}r^{\epsilon},\hspace{2ex} r\in(0,a]$$
  for some $C_{\epsilon}>0$.
  We integrate \eqref{eq:FH05} over $[0,r]$ to obtain
  \begin{equation}\label{eq:FH06}
  (\phi(r)-\phi(0))G(r)=\int_{0}^{r}\alpha_{2}(\eta)G(\eta)d\eta.
  \end{equation}
  By L'H\^{o}spital's rule,
  \begin{equation}\label{eq:FH07a}\begin{aligned}
    \lim_{r\downarrow0}\frac{\phi(r)-\phi(0)}{r}
    =&\lim_{r\downarrow0}\frac{\int_{0}^{r}\alpha_{2}(\eta)G(\eta)d\eta}{rG(r)}\\
    =&\lim_{r\downarrow0}\frac{\alpha_{2}(r)G(r)}{[1-r\alpha_{1}(r)]G(r)}\\
    =&\lim_{r\downarrow0}\frac{\alpha_{2}(r)}{1-r\alpha_{1}(r)}
    =\frac{F_{r}(\phi(0),0)}{v_{r}(0)-F_{\phi}(\phi(0),0)}
  \end{aligned}\end{equation}
  and
  \begin{equation}\label{eq:FH07b}\begin{aligned}
    \lim_{r\downarrow0}\frac{d\phi(r)}{dr}
    =&\lim_{r\downarrow0}\left[r\alpha_{1}(r)\frac{\phi(r)-\phi(0)}{r}+\alpha_{2}(r)\right]\\
    =&\frac{F_{\phi}(\phi(0),0)}{v'(0)}\frac{F_{r}(\phi(0),0)}{v_{r}(0)-F_{\phi}(\phi(0),0)}
    +\frac{F_{r}(\phi(0),0)}{v'(0)}\\
    =&\frac{F_{r}(\phi(0),0)}{v_{r}(0)-F_{\phi}(\phi(0),0)}.
  \end{aligned}\end{equation}
  Therefore, $\phi\in C^{1}[0,a]$ and \eqref{eq:FH0002} holds.

  \emph{Case 3.} $\gamma\in(-\infty,1)$ and $\phi\in Lip[0,a]$.
  In this case, $r\alpha_{1}(r)<1-\epsilon$ for $0<r<r_{\epsilon}$ for some $\epsilon>0$ and $r_{\epsilon}\in(0,a)$. Then
  $$0<G(r)<C_{\epsilon}r^{\epsilon-1},\hspace{2ex} r\in(0,a]$$
  for some $C_{\epsilon}>0$.
  Thus the right hand side of \eqref{eq:FH05} is integrable near $r=0$.
  Since $\phi\in Lip[0,a]$, we know $|\phi(r)-\phi(0)|\leq Cr$, $r\in[0,1]$ for some $C$. We know
  $$\lim_{r\rightarrow0}(\phi(r)-\phi(0))G(r)=0.$$
  We now integrate \eqref{eq:FH05} over $[0,r]$ to obtain \eqref{eq:FH06}. Thus, \eqref{eq:FH07a} and \eqref{eq:FH07b} is also obtained.
  Therefore, $\phi\in C^{1}[0,a]$ and \eqref{eq:FH0002} holds.
\end{proof}

\begin{rmk}
  If $\gamma\in(0,1)$, then $\phi\in C^{\gamma-\epsilon}[0,a]$ for any small $\epsilon>0$;
  If $F_{\phi}(\phi(0),0)\neq0$ and $v(r)=c r^{\beta}(1+o(1))$ for some $c\neq0$, $\beta>1$, then $\phi$ has a derivative at $r=0$.
\end{rmk}

\par\vspace*{2ex}\par\noindent
\textbf{Acknowledgments:}
The authors are very grateful to the professor Yuan Lou  for his helpful comments and Wenrui Hao for his useful suggestions.  Rui Li is sponsored by the China Scholarship Council. Rui Li also would like to thank the Department of Applied Computational Mathematics and Statistics of the University of Notre Dame for its hospitality when she was a visiting student.

\end{document}